\definecolor{mahogany}{cmyk}{0, 0.77, 0.87, 0}
\definecolor{salmon}{cmyk}{0, 0.53, 0.38, 0}
\definecolor{melon}{cmyk}{0, 0.46, 0.50, 0}
\definecolor{yellowgreen}{cmyk}{0.44, 0, 0.74, 0}
\definecolor{brickred}{cmyk}{0, 0.89, 0.94, 0.28}
\definecolor{OliveGreen}{cmyk}{0.64, 0, 0.95, 0.40}
\definecolor{RawSienna}{cmyk}{0, 0.72, 1.0, 0.45}
\definecolor{ZurichRed}{rgb}{1, 0, 0} 
\begin{document}

\newtheorem{lemma}[thm]{Lemma}
\newtheorem{proposition}{Proposition}
\newtheorem{theorem}{Theorem}[section]
\newtheorem{deff}[thm]{Definition}
\newtheorem{case}[thm]{Case}
\newtheorem{prop}[thm]{Proposition}
\newtheorem{example}{Example}

\newtheorem{corollary}{Corollary}

\theoremstyle{definition}
\newtheorem{remark}{Remark}

\numberwithin{equation}{section}
\numberwithin{definition}{section}
\numberwithin{corollary}{section}

\numberwithin{theorem}{section}

\numberwithin{remark}{section}
\numberwithin{example}{section}
\numberwithin{proposition}{section}

\newcommand{\gap}{\lambda_{2,D}^V-\lambda_{1,D}^V}
\newcommand{\gapR}{\lambda_{2,R}-\lambda_{1,R}}
\newcommand{\bD}{\mathrm{I\! D\!}}
\newcommand{\calD}{\mathcal{D}}
\newcommand{\calA}{\mathcal{A}}

\newcommand{\conjugate}[1]{\overline{#1}}
\newcommand{\abs}[1]{\left| #1 \right|}
\newcommand{\cl}[1]{\overline{#1}}
\newcommand{\expr}[1]{\left( #1 \right)}
\newcommand{\set}[1]{\left\{ #1 \right\}}

\newcommand{\calC}{\mathcal{C}}
\newcommand{\calE}{\mathcal{E}}
\newcommand{\calF}{\mathcal{F}}
\newcommand{\Rd}{\mathbb{R}^d}
\newcommand{\BR}{\mathcal{B}(\Rd)}
\newcommand{\R}{\mathbb{R}}
\newcommand{\T}{\mathbb{T}}
\newcommand{\D}{\mathbb{D}}

\newcommand{\al}{\alpha}
\newcommand{\RR}[1]{\mathbb{#1}}
\newcommand{\bR}{\mathrm{I\! R\!}}
\newcommand{\ga}{\gamma}
\newcommand{\om}{\omega}
\newcommand{\A}{\mathbb{A}}
\newcommand{\bH}{\mathbb{H}}

\newcommand{\bb}[1]{\mathbb{#1}}
\newcommand{\bI}{\bb{I}}
\newcommand{\bN}{\bb{N}}

\newcommand{\uS}{\mathbb{S}}
\newcommand{\M}{{\mathcal{M}}}
\newcommand{\calB}{{\mathcal{B}}}

\newcommand{\W}{{\mathcal{W}}}

\newcommand{\m}{{\mathcal{m}}}

\newcommand {\mac}[1] { \mathbb{#1} }

\newcommand{\bC}{\Bbb C}

\newtheorem{rem}[theorem]{Remark}
\newtheorem{dfn}[theorem]{Definition}
\theoremstyle{definition}
\newtheorem{ex}[theorem]{Example}
\numberwithin{equation}{section}

\newcommand{\Pro}{\mathbb{P}}
\newcommand\F{\mathcal{F}}
\newcommand\E{\mathbb{E}}
\newcommand\e{\varepsilon}
\def\H{\mathcal{H}}
\def\t{\tau}

\title[Fourier multipliers and martingales]{Stability in Burkholder's differentially subordinate martingales  inequalities 
and applications to Fourier multipliers}

\author{Rodrigo Ba\~nuelos}\thanks{R. Ba\~nuelos is supported in part  by NSF Grant 
\# 0603701-DMS}
\address{Department of Mathematics, Purdue University, West Lafayette, IN 47907, USA}
\email{banuelos@math.purdue.edu}
\author{Adam Os\c ekowski}\thanks{A. Os\c ekowski is supported in part by the NCN grant DEC-2014/14/E/ST1/00532.}
\address{Department of Mathematics, Informatics and Mechanics, University of Warsaw, Banacha 2, 02-097 Warsaw, Poland}
\email{ados@mimuw.edu.pl}

\subjclass[2010]{Primary: 60G44, 60G46, Secondary: 42B15, 42B20}
\keywords{martingale, differential subordination, Fourier multipliers}

\begin{abstract}
We study stability estimates for the almost extremal functions associated with the $L^p$-bound for the real and imaginary parts of the Beurling-Ahlfors operator. The proof exploits probabilistic methods and rests on analogous results  for differentially subordinate martingales which are of independent interest.   This allows us to obtain stability inequalities for a larger class of Fourier multipliers.
\end{abstract}

\maketitle

\tableofcontents

\section{Introduction and statements of stability for Fourier multipliers}
Sharp inequalities in analysis and geometry have been of interest for many years and many have been investigated from different points of view where not only their sharpness is proved but the extremal quantities  (those that make the inequality an equality) are identified. Once the extremals are known it is natural to ask about the stability of such inequalities. More specifically, the aim in the investigation of stability inequalities is to measure, in terms of an appropriate  distance  from the extremals, how far an admissible quantity is from attaining equality.   For various examples of such stability results in geometry and spectral theory, we refer the reader to the work by Brasco and  Philippis \cite{BP}.  For a sample of stability inequalities in analysis, see  Bianchi and Egnell \cite{BE},  Chen,  Frank and  Weth \cite{CFW}, Christ \cite{MC1}, Dolbeault and Toscani \cite{DT},  Fathi, Indrei and Ledoux \cite{FIL}, and the very recent paper of Carlen \cite{EC},  to list just a few. 

On the probability side, there has been considerable interest in obtaining sharp inequalities for martingales (for many examples and further references, see the monograph \cite{Os}).  Many of these results have had important applications in analysis; we shall see  some examples  below and relate them to some results from the literature.  It is interesting to note that in the case of many of the  classical martingale inequalities, unlike the inequalities in analysis referenced above,  equality is never attained.  That is, extremals do not exist.  This is the case, for example,  in Doob's maximal inequality, in the classical Burkholder-Davis-Gundy inequalities for martingales with continuous paths (Davis \cite{Da1}),   and in Burkholder's martingale inequalities under the assumption of differential subordination, which include his celebrated sharp martingale transforms inequalities \cite{B}.   In this paper we investigate the stability of Burkholder's inequalities  and apply this to obtain similar results for a class of Fourier multipliers that includes the real and imaginary parts of the Beruling-Ahlfors operator,  the two dimensional Hilbert transform, which has been extensively investigated in the literature.  We also obtain the corresponding result for first order Riesz transforms on $\R^d$, $d\geq 1$.  The latter is new even for the Hilbert transform, the case when $d=1$. 

Our results are motivated from the recent paper \cite{Me} by Melas concerning the structure of almost extremal functions associated with the $L^p$-estimate for the dyadic maximal operator $\mathcal{M}$ on $[0,1]^d$, a version of Doob's maximal inequality. It is well-known that $\mathcal{M}$ is a bounded operator on $L^p([0,1]^d)$, $1<p<\infty$, and its norm equals $p/(p-1)$. Although this norm is never attained, there is a very interesting property of the functions which are almost extremal. A careful inspection of the paper \cite{Me2} reveals that for any $\e>0$ there is $f\in L^p$ for which the pointwise identity  $\mathcal{M}f=\left(\frac{p}{p-1}-\e\right)f$ holds true and therefore this family of functions, corresponding to different $\e$, can be regarded as an ``approximate eigenfunction'' of $\mathcal{M}$ associated with the eigenvalue $p/(p-1)$. One of the main results of \cite{Me} makes this observation more precise. It is proved that if $2<p<\infty$ is a fixed exponent, $\e>0$ is a small number and $f$ is \emph{any} nonnegative function satisfying
$$||\mathcal{M}f||_{L^p([0,1]^d)}\geq\left(\frac{p}{p-1}-\e\right)||f||_{L^p([0,1]^d)},$$
 then 
$$\left|\left|\mathcal{M}f-\frac{p}{p-1}f\right|\right|_{L^p([0,1]^d)}\leq c_p\e^{1/p}||f||_{L^p([0,1]^d)}$$ for some constant $c_p$ depending only on $p$. In other words, if $f$ is almost extremal for the $L^p$-estimate, then it is close, in the $L^p$-sense, to being an eigenfunction of $\mathcal{M}$ corresponding to the eigenvalue $p/(p-1)$. 

A careful analysis  reveals a similar phenomenon for Beurling-Ahlfors operator $B$ on the plane $\mathbb{C}$. Recall that this operator is a Fourier multiplier with the symbol $m(\xi)=\overline{\xi}/\xi$, $\xi\in \mathbb{C}$.  Alternatively, it can be defined by the singular integral operator 
$$ Bf(z)=-\frac{1}{\pi}\,\mbox{p.v.}\int_\mathbb{C} \frac{f(w)}{(z-w)^2}\mbox{d}w.$$
This operator plays a fundamental role in the theory of quasiconformal mappings in the plane.  A convenient reference on the subject is the monograph \cite{AstIwaMar} by Astala, Iwaniec and Martin. A crucial property of $B$ is that it changes the complex derivative $\overline{\partial}$ to $\partial$. More precisely, we have $ B(\overline{\partial} f)={\partial}f$ 
for any $f$ in  the Sobolev space $ W^{1,2}(\mathbb{C},\mathbb{C})$ of complex valued locally integrable functions on $\mathbb{C}$ whose distributional first derivatives are in $L^2$ on the plane.  A beautiful long-standing open problem formulated in 1982  by T. Iwaniec \cite{Iwa} asserts that 
$$ ||B||_{L^p(\mathbb{C})\to L^p(\mathbb{C})}=p^*-1,\qquad 1<p<\infty,$$
where $p^*=\max\{p,p/(p-1)\}$. It is well-known that the $L^p$-norm of $B$ cannot be smaller than $p^*-1$. Curiously, the almost-extremal functions, constructed by Lehto \cite{Le}, are also close to being eigenfunctions of $B$, but \emph{up to absolute value}. To state this more  precisely, suppose first that $1<p\leq 2$. For a given $\beta\in (-2/p,0)$, let $f_\beta(z)=|z|^\beta\chi_{\mathbb{D}}(z)$, where $\mathbb{D}$ is the unit disc in the plane. Using the commutation of $\overline{\partial}$ and $\partial$ by $B$, we have that 
\begin{align*}
 Bf_\beta(z)&=B\left(\bar{\partial}\left(\frac{2|z|^\beta\bar{z}}{\beta+2}\chi_{\mathbb{D}}+\frac{2z^{-1}}{\beta+2}\chi_{\mathbb{C}\setminus \mathbb{D}}\right)\right)\\
 &=\partial \left(\frac{2|z|^\beta\bar{z}}{\beta+2}\chi_{\mathbb{D}}+\frac{2z^{-1}}{\beta+2}\chi_{\mathbb{C}\setminus \mathbb{D}}\right)\\
&=\frac{\beta |z|^\beta \bar{z}/z}{\beta+2}\chi_{\mathbb{D}}-\frac{2z^{-2}}{\beta+2}\chi_{\mathbb{C}\setminus \mathbb{D}}=\frac{\beta \bar{z}/z}{\beta+2}f_\beta-\frac{2z^{-2}}{\beta+2}\chi_{\mathbb{C}\setminus \mathbb{D}}.
\end{align*}
Now, if we let $\beta\downarrow -2/p$, then $|\beta/(\beta+2)|\to (p-1)^{-1}=p^*-1$.  Furthermore, the $L^p$-norm of $f_\beta$ converges to infinity and the ``error term'' $-\frac{2z^{-2}}{\beta+2}\chi_{\mathbb{C}\setminus \mathbb{D}}$ becomes irrelevant, so that $||Bf_\beta||_{L^p(\mathbb{C})}/||f_\beta||_{L^p(\mathbb{C})}\to p^*-1$. However, the above formula shows that, essentially, $|Bf_\beta|\approx (p^*-1)|f_\beta|$ pointwise, provided $\beta$ is close to $-2/p$.  In other words, $f_\beta$ is almost an eigenfunction of $B$ with the eigenvalue $p^*-1$, up to absolute value. In the case $p>2$ the calculations are similar and exploit the functions 
$$ f_\beta(z)=\frac{\beta |z|^\beta z/\bar{z}}{\beta+2}\chi_{\mathbb{D}}-\frac{2\bar{z}^{-2}}{\beta+2}\chi_{\mathbb{C}\setminus \mathbb{D}},\qquad \beta\in (-2/p,0),$$
for which $Bf_\beta(z)=|z|^\beta\chi_{\mathbb{D}}$ and $|Bf_\beta|\approx (p^*-1)|f_\beta|$ provided $\beta$ is close to $-2/p$.

Our contribution in this paper is to present a quantitative version of stability result for a large class of Fourier multipliers which includes the real and imaginary parts of Beurling-Ahlfors operator and first order Riesz transforms. Consider the following class of symbols, introduced by 
in \cite{BBB}. 
Assume that $\mu$ is a finite nonnegative Borel measure on the unit sphere $\mathbb{S}$ of $\R^d$ and fix a Borel function 
$\psi$ on $\mathbb{S}$ which take values in the unit ball of $\mathbb{C}$. We define the associated multiplier $m=m_{\psi,\mu}$ on $\R^d$ by
\begin{equation}\label{defm}
 m(\xi)=\frac{\int_\mathbb{S} \langle \xi,\theta\rangle^2\psi(\theta)\mu(\mbox{d}\theta)}
{\int_\mathbb{S} \langle \xi,\theta\rangle^2\mu(\mbox{d}\theta)}
 \end{equation}
if the denominator is not $0$, and $m(\xi)=0$ otherwise. Here $\langle\cdot,\cdot\rangle$ stands for the scalar product on $\R^d$. This class is quite large, containing the real and imaginary parts of the Beurling-Ahlfors operator. To see this, note that $B$  can be decomposed as $B=R_2^2-R_1^2-2iR_1R_2$, where $R_1$, $R_2$ are planar Riesz transforms, that is, the Fourier multipliers with the symbols $-i\xi_1/|\xi|$ and $-i\xi_2/|\xi|$, respectively (see the discussion following Theorem \ref{mainthmf} below).  Indeed, we have the identity
$$ \frac{\overline{\xi}}{\xi}=\frac{\xi_1^2-\xi_2^2}{\xi_1^2+\xi_2^2}-i\frac{2\xi_1\xi_2}{\xi_1^2+\xi_2^2}.$$
Now  both $R_2^2-R_1^2$ and $2R_1R_2$ can be represented as the Fourier multipliers with the symbols of the form \eqref{defm}: the choice $d=2$, $\mu=\delta_{(1,0)}+\delta_{(0,1)}$, $\psi(1,0)=-1=-\psi(0,1)$ leads to $T_m=\Re B$, while taking $d=2$, $\mu=\delta_{(1/\sqrt{2},1/\sqrt{2})}+\delta_{(1/\sqrt{2},-1/\sqrt{2})}$ and $\psi(1/\sqrt{2},1/\sqrt{2})=1=\psi(1/\sqrt{2},-1/\sqrt{2})$ yields $T_m=-\Im B$.  

One of the main results of \cite{BBB} is that if $m$ is as above, then
\begin{equation}\label{BanBog}
 ||T_m||_{L^p(\R^d)\to L^p(\R^d)}\leq p^*-1.
\end{equation}
Furthermore, as shown by Geiss, Montgomery-Smith and Saksman \cite{GMS}, equality holds for the real and imaginary parts of $B$.    We also refer to  \cite{BO} for other such examples. (The bound in \eqref{BanBog} for the real and imaginary parts of $B$ was  proved by  Nazarov and  Volberg \cite{NazVol}, see also \cite{BanMen}.)  One of our main results concerns the $L^p$-stability of such multipliers. Here is the precise statement.

\begin{theorem}\label{mainthmf}
Suppose that $m$ is a symbol from the class \eqref{defm} and $T_m$ is the associated Fourier multiplier.

(i) Let $1<p<2$ and $\e>0$. If $f$ is such that $$||T_mf||_{L^p(\R^d)}\geq ((p-1)^{-1}-\e)||f||_{L^p(\R^d)},$$ then
\begin{equation}\label{mainf<2}
\big|\big| |T_mf|-(p-1)^{-1}|f|\big|\big|_{L^p(\R^d)}\leq c_p\e^{1/2}||f||_{L^p(\R^d)},
\end{equation}
where
$$ c_p=\frac{\left(\frac{p}{p-1}\right)^{(3-p)/2}}{(1-p\left(1-1/p\right)^{p-1})^{1/2}}.$$
The order $O(\e^{1/2})$, as $\e\to 0$, is optimal. Furthermore, the multiplicative factor $c_p$ is of optimal order $O((2-p)^{-1/2})$, as $p\uparrow 2$.

(ii) Let $2<p<\infty$ and $ \e>0$. If $f$ is such that $$||T_mf||_{L^p(\R^d)}\geq (p-1-\e)||f||_{L^p(\R^d)},$$ then
\begin{equation}\label{mainf>2}
\big|\big| |T_mf|-(p-1)|f|\big|\big|_{L^p(\R^d)}\leq c_p\e^{1/p}||f||_{L^p(\R^d)},
\end{equation}
where 
$$ c_p=(p-1)\left[\frac{2pe}{(p-2)(e-2)}\right]^{1/p}.$$
The order $O(\e^{1/p})$, as $\e\to 0$,  is optimal. Furthermore, the multiplicative constant $c_p$ is of optimal orders $O((p-2)^{-1/p})$, as $p\downarrow 2$, and $O(p)$, as $p\to \infty$.

(iii) For $p=2$, there is no stability result of the above type.  That is,  there are no finite constants $c_2$ and $\kappa>0$ such that $$\big|\big| |T_mf|-|f|\big|\big|_{L^2(\R^d)}\leq c_2\e^{\kappa}||f||_{L^2(\R^d)}$$ provided $$||T_mf||_{L^2(\R^d)}\geq (1-\e)||f||_{{L^2(\R^d)}},$$ with $\e$ sufficiently small.
\end{theorem}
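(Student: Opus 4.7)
I would derive parts (i) and (ii) from a quantitative stability refinement of Burkholder's sharp martingale inequality $\|Y\|_p\le(p^*-1)\|X\|_p$ for differentially subordinate pairs: if $\|Y\|_p\ge(p^*-1-\e)\|X\|_p$ then $\bigl\|\,|Y_\infty|-(p^*-1)|X_\infty|\,\bigr\|_p\le c_p\e^\gamma\|X\|_p$ with $\gamma=1/2$ for $p<2$ and $\gamma=1/p$ for $p>2$. This upgrades Burkholder's method: the special function $U_p$ certifying the non-stability inequality is non-sharp either on the ``diagonal'' $|y|=(p^*-1)|x|$ of the boundary region or in its concavity along admissible increments, and a careful second-order expansion of $U_p$ transversal to the extremal configuration, followed by a Chebyshev-type argument, converts the $L^p$-deficit $\e$ into the smallness of $\bigl\|\,|Y|-(p^*-1)|X|\,\bigr\|_p$. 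The switch from $\gamma=1/2$ to $\gamma=1/p$ at $p=2$ reflects the corresponding switch in the transverse order of degeneracy of $U_p$ along the extremal.

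\textbf{Transfer to Fourier multipliers.} For $m$ in the class \eqref{defm}, I would invoke the Ba\~nuelos--Bogdan representation of \cite{BBB}: with $\tilde f$ the heat extension of $f$, $(B_t,Y_t)$ space-time Brownian motion started at height $y_0$ and killed at its exit time $\t$ from the upper half-space, and $X_t=\tilde f(B_{t\wedge\t},Y_{t\wedge\t})$ the induced heat martingale, the quadratic-form transform $N_t$ driven by $(\psi,\mu)$ is differentially subordinate to $X$ and satisfies $\E[N_\t\mid\F_\t]=T_m f(B_\t)$. A transfer argument designed to preserve the \emph{functional} absolute value (rather than passing through $\|T_m f-cf\|_p$, which is too crude: for the Lehto $f_\beta$ one has $Bf_\beta\approx(p^*-1)(\bar z/z)f_\beta$, a rotation rather than a scalar multiple) converts the martingale stability into the multiplier statement with the same constants after letting $y_0\to\infty$.

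\textbf{Optimality and part (iii).} For case (ii) with $p>2$, sharpness is witnessed by the Lehto extremizers $f_\beta$ recalled in the introduction: as $\beta\downarrow-2/p$, an asymptotic computation yields $\e\asymp(\beta+2/p)$ while $\|f_\beta\|_p\asymp(\beta+2/p)^{-1/p}$ and $\bigl\|\,|Bf_\beta|-(p-1)|f_\beta|\,\bigr\|_p=O(1)$ (the tail outside the unit disc dominating), giving ratio $\asymp(\beta+2/p)^{1/p}\asymp\e^{1/p}$; tracking prefactors along the same family recovers the $O((p-2)^{-1/p})$ and $O(p)$ behaviour of $c_p$. For case (i) with $p<2$ the Lehto family only realizes the weaker rate $\e^{1/p}<\e^{1/2}$, so the optimal $\e^{1/2}$ rate and the $O((2-p)^{-1/2})$ behaviour of $c_p$ are instead realized by test functions pulled back from the martingale side of the Ba\~nuelos--Bogdan correspondence. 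For part (iii), take $T_m=\Re B$, whose $L^2$-norm equals $1$ since $m(\xi)=(\xi_1^2-\xi_2^2)/|\xi|^2\in[-1,1]$, and $\hat f_n(\xi)=\phi_n(\xi-e_1)+\phi_n(\xi-e_2)$ with $\phi_n$ a shrinking bump. Since $m(e_1)=1$ and $m(e_2)=-1$, Plancherel gives $\|T_m f_n\|_2/\|f_n\|_2\to 1$ (so $\e_n\to 0$), while the pointwise identities $|f_n|^2\approx|\check\phi_n|^2(2+2\cos 2\pi(x_1-x_2))$ and $|T_m f_n|^2\approx|\check\phi_n|^2(2-2\cos 2\pi(x_1-x_2))$ show that $\bigl\|\,|T_m f_n|-|f_n|\,\bigr\|_2/\|f_n\|_2$ stays above a positive absolute constant, precluding any bound $c_2\e^\kappa$ with $\kappa>0$.

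\textbf{Main obstacle.} The central difficulty is the martingale stability itself: constructing a quantitative refinement of $U_p$ that simultaneously yields the correct $\e$-exponent (which switches at $p=2$) \emph{and} the correct $|p-2|$-dependence of $c_p$. The transfer to Fourier multipliers is technical but standard, provided one respects the functional absolute values on both sides.
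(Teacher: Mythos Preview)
Your high-level strategy is correct and matches the paper: prove a stability refinement of Burkholder's inequality for differentially subordinate martingales via a sharpened majorization for the special function $U_p$, then transfer to Fourier multipliers. However, there are two genuine gaps.

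\textbf{The transfer step.} You correctly observe that $T_mf$ arises only as a projection of the martingale transform, and that one cannot simply push $\bigl\|\,|N_\tau|-(p^*-1)|X_\tau|\,\bigr\|_p$ through this projection. But you do not say how to overcome this. The paper's mechanism is specific: one proves directly that $\int_{\R^d} U_p(f,T_mf)\,dx\le 0$ by inserting the bilinear representation and using \emph{convexity of $U_p(x,\cdot)$ in the second variable} (a Jensen step for the projection). This convexity is automatic for $1<p<2$, but for $p>2$ the standard Burkholder function $U_p^{(1)}(x,y)=p(1-1/p)^{p-1}(|y|-(p-1)|x|)(|x|+|y|)^{p-1}$ is \emph{not} convex in $y$, while the alternative $U_p^{(2)}$ (the one that is convex in $y$) fails the needed majorization. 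The paper constructs a new $U_p$ interpolating between them (equal to $U_p^{(1)}$ for $|y|\ge(p-2)|x|$ and to a pure power of $|x|$ below), and verifying simultaneously its concavity along admissible increments, convexity in $y$, and the majorization $U_p\ge|y|^p-(p-1)^p|x|^p+\alpha_p\bigl||y|-(p-1)|x|\bigr|^p$ is the technical heart of the argument. Your proposal does not anticipate this obstruction.

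\textbf{Sharpness.} Your plan to use Lehto extremizers for $p>2$ targets $B$, but $B$ itself is not in the class \eqref{defm} with norm $p^*-1$ (only $\Re B$ and $\Im B$ separately are). For $\Re B$ the Lehto functions do not yield $|\Re Bf_\beta|\approx(p-1)|f_\beta|$ pointwise: one picks up a stray $|\cos 2\theta|$ factor, and the computation breaks down. For $p<2$ your ``pull back from the martingale side'' is too vague: the Ba\~nuelos--Bogdan representation goes from functions to martingales, not the reverse. The paper's route is genuinely different: it invokes the laminate machinery of Boros--Sz\'ekelyhidi--Volberg, which shows that the distribution of $\operatorname{diag}(G_\infty-F_\infty,F_\infty+G_\infty)$ for a $\pm1$-transform pair $(F,G)$ is a prelaminate, hence realizable (in the weak-$*$ limit) as the distribution of $D^2u$ for some $u\in C_0^\infty(\mathbb{D})$. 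Setting $f=\Delta u$ gives $\Re Bf=\partial_{11}u-\partial_{22}u$ and transfers the discrete martingale example directly to $\Re B$ with the same $\e$-asymptotics. This tool is essential and absent from your outline.

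Your construction for part (iii) via Fourier-side bumps concentrated near $e_1$ and $e_2$ is correct and in fact more direct than the paper's route (which again passes through laminates applied to the $p=2$ martingale counterexample).
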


A few remarks are in order. First, it is clear that the above statement is meaningful only for multipliers which have $L^p$ norm equal to $p^*-1$. Furthermore, the aforementioned  optimality of the constants and exponents will be shown for the real part of the Beurling-Ahlfors operator (and a similar reasoning proves that the optimality holds also for the imaginary part of $B$). We do not know whether the order of $c_p$ as $p\downarrow 1$ is optimal; our examples below indicate that $c_p\geq O((p-1)^{-1})$.

We also mention that inequalities  \eqref{mainf<2} and \eqref{mainf>2} hold for the class of Calder\'on--Zygmund singular integrals  $T_Af(x)=\int_{\R^d} K_A(x, y)f(y)dy$ with kernels of the form 
\begin{equation}
K_A(x, y) = \int_0^\infty \int_{\mathbb{R}^d} \left(A(\bar{x},t)\nabla_{x} p_t(\bar{x}-y)\right) \cdot \nabla_{x} p_t(\bar{x}-x) d\bar{x}dt.  
\end{equation}
Here $A(x, t)$ is an $d\times d$ matrix-valued function with 
$$ \|A\|=\|\sup_{|v|\leq 1}(|A(x,y)v|)\|_{L^\infty(\mathbb{R}^d\times [0,\infty))} \leq 1$$ 
and $\nabla_{x} p_t$ denotes the gradient of the Gaussian (heat) kernel $p_t$.  These are Calder\'on-Zygmund operators but not of convolution (or Fourier multiplier) type unless the matrix does not depend on $x$.  They arise from martingale transforms and as in the case of the multipliers in Theorem \ref{mainthmf},  their $L^p$-norms are also bounded above by $(p^*-1)$.  For details, we refer to Perlmutter \cite{Per}.

Next we present a version of the above result for first-order Riesz transforms. Recall that for any dimension $d\geq 1$, the family of Riesz transforms on $\R^d$ is given by
$$ R_jf(x)=\frac{\Gamma\left(\frac{d+1}{2}\right)}{\pi^{(d+1)/2}}\,\int_{\R^d} \frac{x_j-y_j}{|x-y|^{d+1}}f(y)\mbox{d}y,\qquad j=1,\,2,\,\ldots,\,d,$$
where the integrals are supposed to exist in the sense of Cauchy principal values. In the particular case $d=1$, the family consists of only one element, the Hilbert transform $\mathcal{H}$ on $\R$. Alternatively, $R_j$ can be defined as the Fourier multiplier with the symbol $-i\xi_j/|\xi|$, $\xi\in \R^d\setminus\{0\}$. 
As proved by Iwaniec and Martin \cite{IM}, for any $1<p<\infty$ and any $f\in L^p(\R^d)$ we have
\begin{equation}\label{inn}
||R_j f||_{L^p(\R^d)}\leq \cot\frac{\pi}{2p^*} ||f||_{L^p(\R^d)},\qquad j=1,\,2,\,\ldots,\,d,
\end{equation}
and the constant cannot be decreased. (Note that $ \cot\frac{\pi}{2p^*}$ equals $\tan\frac{\pi}{2p}$ if $1<p\leq 2$, and $\cot\frac{\pi}{2p}$ if $p\geq 2$.) An alternative probabilistic proof of the estimate \eqref{inn} based on a sharp estimate for orthogonal martingales, was given 
in \cite{BW}. Our contribution in this direction is the following stability result.

\begin{theorem}\label{mainthmR}
Let $d$ be a fixed positive integer and let $j\in \{1,\,2,\,\ldots,\,d\}$. Furthermore, let $1<p<\infty$ and pick $f\in L^p(\R^d)$.

(i) Suppose that $1<p<2$ and let $\e>0$. If $f$ is such that $$||R_jf||_{L^p(\R^d)}\geq (\tan\frac{\pi}{2p}-\e)||f||_{L^p(\R^d)},$$ then
\begin{equation}\label{mainR<2}
\left|\left| |R_jf|-\tan\frac{\pi}{2p}|f|\right|\right|_{L^p(\R^d)}\leq c_p\e^{1/2}||f||_{L^p(\R^d)},
\end{equation}
where
$$ c_p=\left(\frac{32}{\pi}\right)^{1/2}\frac{p^{(3-p)/2}}{(p-1)^{(4-p)/2}(2-p)^{1/2}}.$$
The order $O(\e^{1/2})$, as $\e\to 0$, is optimal. Furthermore, the multiplicative factor $c_p$ is of optimal order $O((2-p)^{-1/2})$, as $p\uparrow 2$.

(ii) Suppose that $2<p<\infty$ and let $ \e>0$. If $f$ is such that $$||R_jf||_{L^p(\R^d)}\geq (\cot\frac{\pi}{2p}-\e)||f||_{L^p(\R^d)},$$ then
\begin{equation}\label{mainR>2}
\left|\left| |R_jf|-\cot\frac{\pi}{2p}|f|\right|\right|_{L^p(\R^d)}\leq c_p\e^{1/p}||f||_{L^p(\R^d)},
\end{equation}
where 
$$ c_p=(p-1)\left[\frac{(2+\sqrt{2})p^2}{(p-1)(p-2)}\right]^{1/p}.$$
The order $O(\e^{1/p})$, as $\e\to 0$, is optimal. Furthermore, the multiplicative constant $c_p$ is of optimal orders $O((p-2)^{-1/p})$, as $p\downarrow 2$ and $O(p)$ as $p\to \infty$.

(iii) For $p=2$, there are no finite positive constants $c_2$ and $\kappa$ such that for sufficiently small $\e>0$, the inequality $||R_jf||_{L^2(\R^d)}\geq (1-\e)||f||_{L^2(\R^d)}$ implies $\big|\big| |R_jf|-|f|\big|\big|_{L^2(\R^d)}\leq c_2\e^{\kappa}||f||_{L^2(\R^d)}$.
\end{theorem}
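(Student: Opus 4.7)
The plan is to reduce Theorem~\ref{mainthmR} to a stability inequality for orthogonal martingales, exactly in the spirit of the reduction of Theorem~\ref{mainthmf} to a stability inequality for differentially subordinate pairs. Since the sharp multiplier constant for Riesz transforms is $\cot(\pi/(2p^*))$ rather than the Burkholder constant $p^*-1$, the appropriate martingale class is not arbitrary differentially subordinate pairs but the smaller class of real orthogonal differentially subordinate martingales used by Ba\~nuelos--Wang \cite{BW} in their probabilistic proof of \eqref{inn}. Once the martingale-level stability is in place, the passage to Riesz transforms is made via the Ba\~nuelos--Wang representation. Part (iii) is treated separately by an explicit counterexample.

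At the martingale level, I would establish the following: if $Y$ is a real-valued martingale orthogonal to and differentially subordinate to a $\mathbb{C}$-valued martingale $X$, then the near-equality $||Y_\infty||_p\ge(\cot(\pi/(2p^*))-\e)||X_\infty||_p$ implies
$$\big|\big|\,|Y_\infty|-\cot(\pi/(2p^*))|X_\infty|\,\big|\big|_p\le C_p\,\e^{\kappa}\,||X_\infty||_p,$$
with $\kappa=1/2$ for $1<p<2$ and $\kappa=1/p$ for $p>2$. The Burkholder method applies with the special harmonic function on the sector $\{z\in\mathbb{C}:|\arg z|<\pi/(2p^*)\}$ of \cite{BW} playing the role that the Burkholder function plays in Theorem~\ref{mainthmf}. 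One extracts a strict concavity gain of the appropriate order from this function via Taylor expansion at the sector boundary, and integrates using It\^o's formula together with the orthogonality condition. This parallels the stability argument for differentially subordinate pairs underlying Theorem~\ref{mainthmf}, with $p^*-1$ replaced by $\cot(\pi/(2p^*))$ throughout.

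To transfer to Riesz transforms, one uses the Ba\~nuelos--Wang construction: $R_jf$ is realized as the projection onto the initial $\sigma$-field of an orthogonal martingale transform $Y$ of a martingale $X$ built from the heat extension of $f$ via space-time Brownian motion started at large height. Standard projection arguments together with a rescaling and height limit convert the $L^p$-stability for $(|Y_\infty|,|X_\infty|)$ into the stated inequality for $(|R_jf|,|f|)$. Sharpness of the exponents $\e^{1/2}$ and $\e^{1/p}$, and of the orders of $c_p$ as $p\uparrow 2$, $p\downarrow 2$ and $p\to\infty$, is verified on explicit radial families $f_\beta(x)=|x|^\beta\chi_{B(0,1)}(x)$ with $\beta\to -d/p$, in direct analogy with the Lehto family for the Beurling--Ahlfors operator discussed above. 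For part (iii) I would use explicit counterexamples: in $d=1$, since $\mathcal{H}$ is an isometry on $L^2(\R)$, one may take $f=\chi_{[0,1]}$, obtaining $\e=0$ while $|\mathcal{H}f|\neq|f|$; for $d\geq 2$, a real Schwartz function $f_\delta$ with $\hat f_\delta$ concentrated in small balls of radius $\delta$ around $\pm e_j$ behaves like $\cos(x_j)$ times a smooth envelope, while $R_jf_\delta$ behaves like $\sin(x_j)$ times the same envelope, yielding $\e=O(\delta^2)\to 0$ but $\big|\big|\,|R_jf_\delta|-|f_\delta|\,\big|\big|_2/||f_\delta||_2$ bounded away from zero.

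The main obstacle is the first stage, in particular the sharp $p$-dependence of the constant $C_p$. Recovering the optimal orders $(2-p)^{-1/2}$ and $(p-2)^{-1/p}$ requires a quantitative analysis of the Ba\~nuelos--Wang harmonic function near the edges of its sector of definition, where its second derivatives degenerate and the concavity gain one can extract from Taylor expansion shrinks. The transfer step and the counterexamples are essentially routine by comparison.
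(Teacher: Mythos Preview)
Your overall strategy for the inequalities in parts (i) and (ii) is correct and matches the paper: one proves a stability theorem for real orthogonal differentially subordinate martingales (this is Theorem~\ref{mainthmo} in the paper), using the Pichorides/Ba\~nuelos--Wang harmonic sector function in place of the Burkholder function, and then transfers to Riesz transforms via a probabilistic representation. The paper uses the Gundy--Varopoulos background radiation (Poisson extension) representation rather than the heat extension you mention, but this is a minor technical choice.

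The genuine gap is in your sharpness argument. The radial Lehto families $f_\beta(x)=|x|^\beta\chi_{B(0,1)}(x)$ do \emph{not} work for Riesz transforms. If $f$ is radial then $R_jf(x)=(x_j/|x|)\,g(|x|)$ for some radial profile $g$, so $|R_jf(x)|=|x_j|/|x|\cdot|g(|x|)|$ carries nontrivial angular dependence and can never satisfy $|R_jf_\beta|\approx c\,|f_\beta|$ pointwise. The Lehto mechanism relies on the special commutation $B\bar\partial=\partial$ for the Beurling--Ahlfors operator, which has no analogue for $R_j$. The paper's sharpness proof is quite different: it first exhibits near-extremal \emph{orthogonal martingale} pairs via planar Brownian motion stopped on the boundary of a suitable angular sector (this produces the Pichorides constant naturally), then for $d=1$ transfers these to near-extremal functions for the Hilbert transform on $\R$ via a conformal map and the Davis ``blowing up'' technique, and finally for $d>1$ reduces to $d=1$ by a dilation argument $T_t=\delta_t^{-1}\circ R_1\circ\delta_t$ that degenerates $R_1$ to $\mathcal{H}^\R\otimes\mathrm{Id}$ as $t\to 0$. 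You would need to replace your radial-function proposal with something along these lines.
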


 As noted above, when $d=1$, $R_1$ reduces to the classical Hilbert transform and Theorem \ref{mainthmR} gives the stability of Pichorides'  \cite{P} inequality.

Let us say a few words about the proofs and the organization of the paper. Our approach will be probabilistic and will exploit similar  tight estimates for differentially subordinate martingales. The probabilistic content of the paper can be found in \S2, while \S3 contains the proofs of the analytic results, Theorem \ref{mainthmf} and Theorem \ref{mainthmR}.

\section{Stability for martingale inequalities}

\subsection{Background, statement of results and  method of proofs}

Suppose that $(\Omega,\calF,\mathbb{P})$ is a complete probability space, filtered by $(\calF_t)_{t\geq 0}$, a non-decreasing family of sub-$\sigma$-algebras of $\calF$ such that $\calF_0$ contains all the events of probability $0$. Let $X$, $Y$ be two adapted c\'adl\'ag martingales, i.e., with right-continuous trajectories that have limits from the left, taking values in  a given separable Hilbert space $\mathbb{H}$. We may and will assume that $\mathbb{H}$ is equal to $\ell_2$, and we will denote the norm in $\mathbb{H}$ by $|\cdot|$, and the corresponding inner product by $\langle \cdot,\cdot\rangle$. The symbol $[X,X]$ stands for the square bracket of $X$; see e.g. Dellacherie and Meyer \cite{DM} for the definition in the real-valued case, and extend the notion to the vector setting by $[X,X]=\sum_{k=1}^\infty [X^k,X^k]$, where $X^k$ is the $k$-th coordinate of $X$. For any $1\leq p\leq \infty$, we will use the notation $||X||_p=\sup_{t\geq 0}||X_t||_p$ for the $p$-th norm of the process $X$, and denote by $X_\infty$ the almost sure limit $\lim_{t\to\infty}X_t$, if it exists. Martingales $X$ and $Y$ are said to be orthogonal, if their square bracket is constant: $[X,Y]=[X,Y]_0$. Following 
\cite{BW} and 
 \cite{W}, we say that $Y$ is \emph{differentially subordinate} to $X$, if the process $([X,X]_t-[Y,Y]_t)_{t\geq 0}$ is nonnegative and nondecreasing as a function of $t$. The origins of this notion go back to Burkholder's paper \cite{B-1}, who introduced the differential subordination in the context of discrete martingales: a martingale $g=(g_n)_{n\geq 0}$ is differentially subordinate to $f=(f_n)_{n\geq 0}$ if we have $|g_0|\leq |f_0|$ and $|g_n-g_{n-1}|\leq |f_n-f_{n-1}|$ almost surely for all $n$. Treating such martingales as continuous-time processes (via $X_t=f_{\lfloor t\rfloor}$, $Y_t=g_{\lfloor t\rfloor}$), we see that the continuous-time definition  is consistent with the original one. The following discrete-time example will be of importance to us later: suppose that $f=(f_n)_{n\geq 0}$ is a martingale and let $v=(v_n)_{n\geq 0}$ be a deterministic sequence. We say that $g$ is the transform of $f$ by $v$ if we have $g_0=v_0f_0$ and $g_n-g_{n-1}=v_n(f_n-f_{n-1})$ for all $n\geq 1$. One immediately checks that if the sequence $v$ takes values in the interval $[-1,1]$, then $g$ is differentially subordinate to $f$.

Differential subordination (regardless of orthogonality) implies many interesting inequalities between the processes involved, and these estimates have plenty of further applications in many areas of mathematics. The literature on this is now quite large, we refer the reader to the works \cite{BanBau}, \cite{BB}, \cite{BW}, \cite{B}, \cite{B1}, \cite{B2}, \cite{BO1}, \cite{BO2}, \cite{Os}, \cite{Os3}, \cite{W} and references therein. For example, we have the following classical statement, proved by Burkholder \cite{B} in the discrete-time setting and extended to the continuous time by Wang \cite{W}. We keep the notation $p^*=\max\{p,p/(p-1)\}$ introduced in the preceding section.

\begin{theorem}
Suppose that $X$, $Y$ are $\mathbb{H}$-valued martingales such that $Y$ is differentially subordinate to $X$. Then for any $1<p<\infty$ we have the inequality
\begin{equation}\label{burkin}
||Y||_p\leq (p^*-1)||X||_p.
\end{equation}
The constant $p^*-1$ is the best possible even in the above context of discrete-time martingale transforms with $\mathbb{H}=\R$.
\end{theorem}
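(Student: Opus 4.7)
The plan is to follow Burkholder's classical method via a special majorant function. I would introduce the ``goal'' function $V(x,y)=|y|^p-(p^*-1)^p|x|^p$ together with Burkholder's function
$$U(x,y)=p\bigl(1-1/p^*\bigr)^{p-1}\bigl(|y|-(p^*-1)|x|\bigr)\bigl(|x|+|y|\bigr)^{p-1},\qquad (x,y)\in\mathbb{H}\times\mathbb{H}.$$
Since both $U$ and $V$ depend only on $|x|$ and $|y|$, the pointwise majorization $V(x,y)\leq U(x,y)$ reduces to a one-variable calculus check on the nonnegative quadrant, handled separately in the regimes $1<p\leq 2$ and $p\geq 2$.

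The heart of the proof is the following ``diagonal concavity'' estimate: for all $x,y,h,k\in\mathbb{H}$ with $|k|\leq|h|$,
$$U(x+h,y+k)\leq U(x,y)+\langle U_x(x,y),h\rangle+\langle U_y(x,y),k\rangle,$$
with the first-order terms interpreted in a suitable one-sided sense on the singular set $\{|x|+|y|=0\}$. My strategy is first to reduce to finite dimensions by projecting $x,y,h,k$ onto their joint span; then to compute the Hessian of $U$ in the smooth region and verify that it is negative semidefinite on the cone $\{(h,k):|k|\leq|h|\}$; and finally to absorb the singular set by a limiting approximation.

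With this lemma in hand, I would apply It\^o's formula to $t\mapsto U(X_t,Y_t)$, using the diagonal concavity estimate to control jump contributions. The stochastic-integral terms have zero expectation. The quadratic-variation contribution is nonpositive precisely because $([X,X]_t-[Y,Y]_t)$ is nondecreasing, which supplies the infinitesimal ``$|k|\leq|h|$'' condition. This yields $\mathbb{E}\,U(X_t,Y_t)\leq \mathbb{E}\,U(X_0,Y_0)\leq 0$; the final inequality uses $|Y_0|\leq|X_0|$ (an immediate consequence of differential subordination), which forces $U(X_0,Y_0)\leq 0$. Combining with $V\leq U$ and taking a supremum in $t$ gives \eqref{burkin}.

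For sharpness of $p^*-1$, I would construct, for each $\delta>0$, a real-valued discrete martingale $f=(f_n)$ together with a predictable sequence $v$ with $|v_n|\leq 1$ such that its transform $g$ satisfies $||g||_p/||f||_p\geq p^*-1-\delta$; Burkholder's original construction uses a suitably biased random walk stopped at a large level, with $v_n=\pm 1$ chosen according to the sign of the increment. The principal obstacle is the diagonal concavity inequality above: both the discovery of the precise form of $U$ and the verification of its Hessian bound on the cone $|k|\leq|h|$ are delicate, and the non-smoothness of $U$ on $\{x=y=0\}$ requires a careful approximation argument inside It\^o's formula. Once that inequality is established, the remaining steps are essentially routine.
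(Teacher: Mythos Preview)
The paper does not give its own proof of this theorem; it is quoted as background, attributed to Burkholder (discrete time) and Wang (continuous time), and the relevant machinery is packaged as Theorem~\ref{Wang}. Your proposal is precisely the Burkholder--Wang approach the paper is citing: the special function $U$, the pointwise majorization $V\leq U$, the diagonal concavity/Hessian bound on the cone $|k|\leq |h|$, and the It\^o-formula argument with localization are exactly the ingredients of Wang's proof, and the paper later reuses this very function (called $U_p^{(1)}$ there) when building the stability estimates. One small correction: the singular set of $U$ is $\{|x|\cdot|y|=0\}$ (the full coordinate cross), not just the origin $\{|x|+|y|=0\}$, and the continuous-time argument needs stopping-time localization before taking expectations, as in Theorem~\ref{Wang}; otherwise your outline is accurate.
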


Here is the orthogonal version of the above statement, proved by 
 in \cite{BW}. In the case when the martingales arise from conjugate harmonic functions in the disc, this is due to   Pichorides \cite{P}.

\begin{theorem}
Suppose that $X$, $Y$ are real-valued orthogonal martingales such that $Y$ is differentially subordinate to $X$. Then for any $1<p<\infty$ we have the inequality
\begin{equation}\label{BWin}
||Y||_p\leq \cot\frac{\pi}{2p^*}||X||_p.
\end{equation}
The constant $\cot\frac{\pi}{2p^*}$ is the best possible.
\end{theorem}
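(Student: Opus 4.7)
The plan is to adapt Burkholder's method of special functions, specialized to orthogonal martingales. Setting $c_p=\cot(\pi/(2p^*))$, the goal is to produce an auxiliary function $U_p:\R^2\to\R$ with three properties: (i) the pointwise majorization $U_p(x,y)\ge |y|^p-c_p^p|x|^p$; (ii) a starting-value control $U_p(X_0,Y_0)\le 0$; and (iii) a supermartingale property for $U_p(X_t,Y_t)$ whenever $(X,Y)$ is an orthogonal pair with $Y$ differentially subordinate to $X$. Once such a $U_p$ is in hand, It\^o's formula and optional stopping give $\E\,U_p(X_\infty,Y_\infty)\le 0$, and (i) forces $\|Y\|_p^p\le c_p^p\|X\|_p^p$.

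Orthogonality simplifies (iii) dramatically. Since $d[X,Y]_s=0$, the It\^o drift of $U_p(X_t,Y_t)$ reduces to $\tfrac12\bigl(U_{p,xx}\,d[X,X]_s+U_{p,yy}\,d[Y,Y]_s\bigr)$. Using differential subordination to write $d[X,X]=d[Y,Y]+d\nu$ for a nonnegative measure $\nu$, this drift equals $\tfrac12\bigl((U_{p,xx}+U_{p,yy})\,d[Y,Y]_s+U_{p,xx}\,d\nu_s\bigr)$, which is nonpositive for arbitrary admissible $(X,Y)$ exactly when $U_{p,xx}+U_{p,yy}\le 0$ and $U_{p,xx}\le 0$. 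So $U_p$ must be simultaneously superharmonic and concave along every horizontal line.

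The core of the argument is the explicit construction of $U_p$, in the spirit of Pichorides's analysis of conjugate harmonic functions. For $1<p\le 2$, I would split $\R^2$ by the rays $|y|=|x|\tan(\pi/(2p))$. On the ``wide'' sectors $|y|\ge |x|\tan(\pi/(2p))$, set $U_p(x,y)=|y|^p-c_p^p|x|^p$ directly, achieving equality in (i); a direct computation shows this is concave in $x$ and superharmonic there precisely when $\tan(\pi/(2p))\ge 1$, i.e.\ when $p\le 2$. On the ``narrow'' sectors $|y|<|x|\tan(\pi/(2p))$, take $U_p$ equal to a suitable negative multiple of $\operatorname{Re}(|x|+i|y|)^p$; this is harmonic, hence superharmonic, and concavity in $x$ reduces to checking the sign of $\cos((2-p)\theta)$ on the sector, which is positive for $1<p<2$. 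The angle $\pi/(2p)$ is exactly the one at which both pieces vanish on the interface, and the multiplicative constant is then fixed by $C^1$-matching of the gradients; it is here that $c_p=\tan(\pi/(2p))=\cot(\pi/(2p^*))$ emerges. A one-sided distributional check across the interface, in the spirit of Burkholder's machinery, completes (iii). The range $2<p<\infty$ then follows either by an analogous construction or by a duality argument exploiting the $L^p$/$L^{p'}$ pairing of stochastic integrals.

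Sharpness of $\cot(\pi/(2p^*))$ is obtained by lifting the classical Pichorides extremizers to the probabilistic setting. Let $B_t$ be planar Brownian motion started at an interior point of the unit disc, let $u$ be the Poisson integral of the characteristic function of a small arc on $\partial\mathbb{D}$, and let $v$ be its harmonic conjugate. Setting $X_t=u(B_{t\wedge\tau})$ and $Y_t=v(B_{t\wedge\tau})$, where $\tau$ is the exit time from $\mathbb{D}$, the Cauchy--Riemann equations give $[X,X]=[Y,Y]$ and $[X,Y]=0$, so $Y$ is orthogonal to $X$ and (trivially) differentially subordinate to it; shrinking the arc reproduces Pichorides's saturation of the constant. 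The main obstacle in this program is the construction of $U_p$: producing the harmonic piece, gluing it to the outer piece $C^1$-smoothly, and rigorously verifying both the majorization (i) everywhere and the supermartingale property (iii) across the gluing interface. These are the standard but genuinely delicate difficulties of Burkholder's method, sharpened here by the additional concavity-in-$x$ constraint that orthogonality imposes beyond superharmonicity.
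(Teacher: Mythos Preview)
The paper does not prove this theorem itself; it is quoted as a known result from \cite{BW}. Your proposal is essentially the Ba\~nuelos--Wang/Pichorides argument that \cite{BW} carries out, and it is correct in outline. The piecewise special function you describe---equal to $|y|^p-c_p^p|x|^p$ on $|y|\ge |x|\tan(\pi/(2p))$ and to $-\beta_p R^p\cos(p\theta)$ on the complementary cone, with $\beta_p=\sin^{p-1}(\pi/(2p))/\cos(\pi/(2p))$ fixed by $C^1$-matching---is exactly the classical construction, and your verification of superharmonicity and concavity in $x$ on each piece is right.

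Two small points of comparison with the paper's framework. First, the paper's abstract criterion (the analogue of your step (iii), stated as Theorem~\ref{Wang2}) asks for superharmonicity together with \emph{convexity in $y$}, not concavity in $x$; your It\^o computation shows that superharmonicity plus concavity in $x$ also suffices, and indeed the specific $U_p$ enjoys both. The paper needs the convexity-in-$y$ formulation later for the transference to Fourier multipliers, which is why it is stated that way. Second, when the paper revisits this circle of ideas for its stability results, it does not use your piecewise function but rather takes $V_p(x,y)=-\beta_p R^p\cos(p\theta)$ globally and proves the pointwise majorization $V_p\ge |y|^p-c_p^p|x|^p$ (plus a remainder) everywhere; this is a slightly different packaging of the same ingredients.

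One caution: for $p>2$ you offer ``an analogous construction or a duality argument.'' Duality between $L^p$ and $L^{p'}$ does not transfer orthogonal differentially subordinate pairs to one another in any obvious way, so that route is not straightforward. The direct construction does work: one uses a function built from $R^p\cos\!\big(p(\tfrac{\pi}{2}-\theta)\big)$ on the appropriate cone (see the paper's $V_p$ in \S\ref{p>2o}), and the analysis parallels the $p<2$ case. Your sharpness argument via planar Brownian motion composed with a conformal map is the standard one and is fine.
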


For the vector-valued version of this result, consult the work \cite{O4}.  In this setting  the constants change slightly in the case $1<p<3$.

One of our main results is the following $L^p$-stability statement in the above probabilistic context.  This  can be regarded as the stochastic analogue of Theorem \ref{mainthmf}.

\begin{theorem}\label{mainthm}
Suppose that $X$, $Y$ are $\mathbb{H}$-valued martingales such that $Y$ is differentially subordinate to $X$. 

(i) Let $1<p<2$ and $\e>0$. If $X$ and $Y$ are $L^p$-bounded and satisfy the estimate $||Y||_p\geq ((p-1)^{-1}-\e)||X||_p$, then
\begin{equation}\label{main<2}
\big|\big| |Y_\infty|-(p-1)^{-1}|X_\infty|\big|\big|_p\leq c_p\e^{1/2}||X||_p,
\end{equation}
where
$$ c_p=\frac{\left(\frac{p}{p-1}\right)^{(3-p)/2}}{(1-p\left(1-1/p\right)^{p-1})^{1/2}}.$$
The order $O(\e^{1/2})$ as $\e\to 0$ is optimal. Furthermore, the multiplicative factor $c_p$ is of optimal order $O((2-p)^{-1/2})$ as $p\uparrow 2$.

(ii) Let $2<p<\infty$ and $ \e>0$. If $X$ and $Y$ are $L^p$-bounded and satisfy the estimate $||Y||_p\geq (p-1-\e)||X||_p$, then
\begin{equation}\label{main>2}
\big|\big| |Y_\infty|-(p-1)|X_\infty|\big|\big|_p\leq c_p\e^{1/p}||X||_p,
\end{equation}
where 
$$ c_p=(p-1)\left[\frac{2pe}{(p-2)(e-2)}\right]^{1/p}.$$
The order $O(\e^{1/p})$ as $\e\to 0$ is optimal. Furthermore, the multiplicative constant $c_p$ is of optimal orders $O((p-2)^{-1/p})$ as $p\downarrow 2$ and $O(p)$ as $p\to \infty$.

(iii) For $p=2$, there are no finite positive constants $c_2$ and $\kappa$ such that for sufficiently small $\e>0$, the inequality $||Y||_2\geq (1-\e)||X||_2$ implies $\big|\big| |Y_\infty|-|X_\infty|\big|\big|_2\leq c_2\e^{\kappa}||X||_2$.
\end{theorem}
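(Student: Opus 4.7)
The plan is to adapt Burkholder's Bellman-function method underlying \eqref{burkin} to the stability setting. Recall that Burkholder's function
\[U_p(x,y) = \alpha_p\bigl(|y|-(p^*-1)|x|\bigr)\bigl(|x|+|y|\bigr)^{p-1},\qquad \alpha_p = p(1-1/p^*)^{p-1},\]
dominates $|y|^p-(p^*-1)^p|x|^p$ pointwise with equality on $\{|y|=(p^*-1)|x|\}$, and $U_p(X_t,Y_t)$ is a supermartingale with $\E U_p(X_\infty,Y_\infty)\leq 0$ whenever $Y$ is differentially subordinate to $X$. Introduce the deficit $\mathcal{D}=(p^*-1)^p\|X\|_p^p-\|Y\|_p^p\geq 0$; the hypothesis $\|Y\|_p\geq(p^*-1-\e)\|X\|_p$ forces $\mathcal{D}=O(\e\|X\|_p^p)$. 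The heart of the argument is to establish the pointwise reinforcement
\[\bigl(|y|-(p^*-1)|x|\bigr)^2\bigl(|x|+|y|\bigr)^{p-2}\leq K_p\bigl[U_p(x,y)-|y|^p+(p^*-1)^p|x|^p\bigr]\]
for all $x,y\in\mathbb{H}$, with a constant $K_p$ of the correct asymptotic order. Setting $(x,y)=(X_\infty,Y_\infty)$, taking expectations and using the supermartingale bound for $U_p$ gives the $L^2$-type control
\[\E\bigl[\bigl(|Y_\infty|-(p^*-1)|X_\infty|\bigr)^2\bigl(|X_\infty|+|Y_\infty|\bigr)^{p-2}\bigr]\leq K_p\mathcal{D}.\]

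From this quadratic estimate, the stability bounds in (i) and (ii) follow by H\"older in two different regimes. For (ii) with $p>2$ and $C=p-1\geq 1$, the elementary bound $||y|-C|x||\leq C(|x|+|y|)$ yields $||y|-C|x||^{p-2}\leq C^{p-2}(|x|+|y|)^{p-2}$, whence $\E||Y_\infty|-C|X_\infty||^p\leq C^{p-2}K_p\mathcal{D}$; combining with the estimate for $\mathcal{D}$ and taking $p$-th roots gives \eqref{main>2}. For (i) with $1<p<2$, setting $f=\bigl||Y_\infty|-(p-1)^{-1}|X_\infty|\bigr|$ and applying H\"older with conjugate exponents $2/p$ and $2/(2-p)$ gives
\[\E f^p\leq\bigl(\E[f^2(|X_\infty|+|Y_\infty|)^{p-2}]\bigr)^{p/2}\bigl(\E(|X_\infty|+|Y_\infty|)^p\bigr)^{(2-p)/2};\]
the second factor is bounded via Minkowski and \eqref{burkin} by $(p^*\|X\|_p)^{p(2-p)/2}$, so combining with the pointwise reinforcement and the bound on $\mathcal{D}$ delivers \eqref{main<2}.

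The main technical obstacle is proving the pointwise reinforcement \emph{globally}, not merely near the equality locus $|y|=(p^*-1)|x|$, and with $K_p$ of the sharp asymptotic orders $(2-p)^{-1}$ as $p\uparrow 2$ and $(p-2)^{-1}$ as $p\downarrow 2$, so as to reproduce the claimed orders for $c_p$. By homogeneity and radial dependence the problem reduces to a two-variable inequality in $(s,t)=(|x|,|y|)\in[0,\infty)^2$, which splits at $t=(p^*-1)s$ into two regions, each amenable to a one-variable analysis. Optimality of the exponents $\e^{1/2}$ and $\e^{1/p}$ and of the $c_p$ asymptotics is then verified by constructing explicit almost-extremal pairs $(X,Y)$ via martingale transforms of a simple random walk, refining Burkholder's classical extremal sequence to saturate the stability estimate.

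Finally, part (iii) reflects a geometric obstruction: at $p=2$ the estimate \eqref{burkin} degenerates to $\|Y\|_2\leq\|X\|_2$, with equality whenever $Y$ is orthogonal to $X$ and $[X,X]=[Y,Y]$. For planar Brownian motion $(W^1,W^2)$ stopped at $\tau=1$, one has $\|W^1\|_2=\|W^2\|_2=1$, while independence of $W^1_1$ and $W^2_1$ yields $\bigl\|\,|W^1_1|-|W^2_1|\,\bigr\|_2=\sqrt{2(1-2/\pi)}>0$. Replacing $Y=W^2$ by $(1-\delta)Y$ preserves differential subordination and produces a family with $\|Y\|_2/\|X\|_2=1-\delta$ arbitrarily close to $1$ while $\bigl\|\,|Y_\infty|-|X_\infty|\,\bigr\|_2$ remains uniformly bounded below, precluding any estimate of the form claimed in (i)-(ii).
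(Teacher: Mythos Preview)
Your outline for $1<p<2$ and for $p=2$ matches the paper's argument essentially verbatim: the paper proves exactly the reinforcement
\[
U_p(x,y)\geq (p-1)^p|y|^p-|x|^p+\Bigl(1-p\bigl(1-\tfrac1p\bigr)^{p-1}\Bigr)\frac{\bigl((p-1)|y|-|x|\bigr)^2}{(|x|+|y|)^{2-p}},
\]
takes expectations, and closes with the same H\"older step you describe. For $p=2$ the paper uses a simpler discrete two-point example rather than Brownian motion, but your example is equally valid.

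For $p>2$, however, your route has a genuine gap relative to the stated theorem. The step ``$\bigl||y|-C|x|\bigr|^{p-2}\le C^{p-2}(|x|+|y|)^{p-2}$'' with $C=p-1$ costs a factor $(p-1)^{p-2}$, so your final bound reads
\[
\bigl\|\,|Y_\infty|-(p-1)|X_\infty|\,\bigr\|_p^p\le (p-1)^{p-2}K_p\cdot p(p-1)^{p-1}\e\,\|X\|_p^p,
\]
and hence $c_p\asymp (p-1)^{(2p-3)/p}K_p^{1/p}$. A second-order analysis at the equality point $|y|=(p-1)|x|$ and at $|x|=0$ shows the optimal $K_p$ in your quadratic reinforcement is only of order $1/p$ for large $p$, so your $c_p$ is of order $p^2$, not $p$, as $p\to\infty$. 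You therefore do not recover the constant $c_p=(p-1)\bigl[\tfrac{2pe}{(p-2)(e-2)}\bigr]^{1/p}$ in \eqref{main>2}, nor the claimed optimal order $O(p)$.

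The paper avoids this loss by replacing the quadratic reinforcement with a direct $L^p$ reinforcement,
\[
U_p(x,y)\ \ge\ |y|^p-(p-1)^p|x|^p+\alpha_p\bigl||y|-(p-1)|x|\bigr|^p,\qquad \alpha_p=\tfrac{p-2}{p-1}\bigl(\tfrac12-\tfrac1e\bigr),
\]
so that no pointwise conversion is needed. To make this majorization go through (and, separately, to have $U_p(x,\cdot)$ convex for the multiplier application), the paper also modifies Burkholder's function in the cone $|y|<(p-2)|x|$, replacing it there by $-\tfrac{(p-1)^{2p-2}}{p^{p-2}}|x|^p$. The one-variable verification of this $L^p$ majorization (their Lemma containing \eqref{maj>2}) is the technical core you would have to supply; once it is in hand, taking expectations gives $\alpha_p\|\,\cdot\,\|_p^p\le p(p-1)^{p-1}\e\|X\|_p^p$ and the stated $c_p$ with the correct $O(p)$ growth falls out immediately.
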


Note that the assumption on the $L^p$-boundedness of $X$ and $Y$ implies the existence of $X_\infty$ and $Y_\infty$, so the above formulation makes sense. 
As in the analytic setting, we do not know whether the order $O((p-1)^{-3/2})$ of $c_p$ as $p\downarrow 1$ is optimal. We will prove that $c_p\geq O((p-1)^{-1})$, by constructing appropriate examples.

Let us briefly handle the case $p=2$. Suppose that $\Omega=[0,1]$, $\F=\mathcal{B}(0,1)$ and $\mathbb{P}$ is a Lebesgue measure. Take $X_t=Y_t=\chi_{[0,1]}$ for $t\in [0,1)$ and $X_t=2\chi_{[0,1/2]}$, $Y_t=2\chi_{(1/2,1]}$, $t\geq 1$. Then $Y$ is differentially subordinate to $X$ (which is equivalent to the trivial inequality $|Y_1-Y_{1-}|\leq |X_1-X_{1-}|$). Furthermore, we have $||Y||_2=||X||_2$, so the condition $||Y||_2\geq (1-\e)||X||_2$ is satisfied for all $\e>0$; on the other hand, the ratio $\big|\big||Y_\infty|-|X_\infty|\big|\big|_2/||X||_2$ is positive (and does not depend on $\e$), so the inequality $\big|\big| |Y_\infty|-|X_\infty|\big|\big|_2\leq c_2\e^{\kappa}||X||_2$ is violated for sufficiently small $\e$ (no matter what $c_2$ and $\kappa$ are). Therefore, we have to establish the first two parts of Theorem \ref{mainthm}, and this will be done in Subsections \ref{example}, \ref{p<2no} and \ref{p>2no} below.

In the orthogonal case we will prove the following statement.

\begin{theorem}\label{mainthmo}
Suppose that $X$, $Y$ are orthogonal real-valued martingales such that $Y$ is differentially subordinate to $X$. 

(i) Let $1<p<2$ and $\e>0$. If $X$ and $Y$ are $L^p$-bounded and satisfy the estimate $||Y||_p\geq (\tan\frac{\pi}{2p}-\e)||X||_p$, then
\begin{equation}\label{maino<2}
\left|\left| |Y_\infty|-\tan\frac{\pi}{2p}|X_\infty|\right|\right|_p\leq c_p\e^{1/2}||X||_p,
\end{equation}
where
$$ c_p=\left(\frac{32}{\pi}\right)^{1/2}\frac{p^{(3-p)/2}}{(p-1)^{(4-p)/2}(2-p)^{1/2}}.$$
The order $O(\e^{1/2})$ as $\e\to 0$ is optimal. Furthermore, the multiplicative factor $c_p$ is of optimal order $O((2-p)^{-1/2})$ as $p\uparrow 2$.

(ii) Let $2<p<\infty$ and $ \e>0$. If $X$ and $Y$ are $L^p$-bounded and satisfy the estimate $||Y||_p\geq (\cot\frac{\pi}{2p}-\e)||X||_p$, then
\begin{equation}\label{maino>2}
\left|\left| |Y_\infty|-\cot\frac{\pi}{2p}|X_\infty|\right|\right|_p\leq c_p\e^{1/p}||X||_p,
\end{equation}
where 
$$ c_p=(p-1)\left[\frac{(2+\sqrt{2})p^2}{(p-1)(p-2)}\right]^{1/p}.$$
The order $O(\e^{1/p})$ as $\e\to 0$ is optimal. Furthermore, the multiplicative constant $c_p$ is of optimal orders $O((p-2)^{-1/p})$ as $p\downarrow 2$ and $O(p)$ as $p\to \infty$.

(iii) For $p=2$, there are no finite positive constants $c_2$ and $\kappa$ such that for sufficiently small $\e>0$, the inequality $||Y||_2\geq (1-\e)||X||_2$ implies $\big|\big| |Y_\infty|-|X_\infty|\big|\big|_2\leq c_2\e^{\kappa}||X||_2$.
\end{theorem}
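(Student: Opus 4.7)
The approach mirrors the method used for Theorem \ref{mainthm}, replacing Burkholder's function for (arbitrary) differentially subordinate martingales with the Bellman-type function $U_p:\R\times\R\to\R$ that underlies the Ba\~nuelos--Wang proof of the orthogonal estimate \eqref{BWin}. The skeleton is the same in both orthogonal and non-orthogonal settings. Namely, with $K=\tan\frac{\pi}{2p}$ if $1<p<2$ and $K=\cot\frac{\pi}{2p}$ if $p>2$, I would work with the natural ``excess'' function $V_p(x,y)=|y|^p-K^p|x|^p$ and seek $U_p$ enjoying (a) the It\^o-type inequality guaranteeing that $t\mapsto \mathbb{E}U_p(X_t,Y_t)$ is nonincreasing whenever $X,Y$ are real, orthogonal and $Y$ is differentially subordinate to $X$; (b) a suitable majorization $U_p(x,y)\geq V_p(x,y)+G_p(x,y)$ where $G_p\geq 0$ is the crucial ``gap term'' that quantifies how far $(x,y)$ is from the extremal configuration; and (c) a pointwise bound $U_p(x_0,y_0)\leq 0$ at the initial point. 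The gap $G_p$ must be engineered so that it controls $(|y|-K|x|)^2$ weighted by a function of $(|x|,|y|)$ that is integrable against $|X|^p+|Y|^p$.

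\textbf{Part (i) $1<p<2$.} Combining (a)--(c) yields
\begin{equation*}
\mathbb{E}\,G_p(X_\infty,Y_\infty)\;\leq\;K^p\|X\|_p^p-\|Y\|_p^p.
\end{equation*}
The standing assumption $\|Y\|_p\geq(K-\e)\|X\|_p$, together with the elementary estimate $K^p-(K-\e)^p\leq pK^{p-1}\e$ for small $\e$, bounds the right-hand side by $pK^{p-1}\e\|X\|_p^p$. The remaining task is to pass from $\mathbb{E}G_p(X_\infty,Y_\infty)\lesssim \e\|X\|_p^p$ to the required $L^p$-control of $|Y_\infty|-K|X_\infty|$. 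This is an application of H\"older's inequality: writing schematically $G_p(x,y)\asymp (|y|-K|x|)^2(|x|+|y|)^{p-2}$ near the diagonal $|y|=K|x|$, one expresses
\begin{equation*}
\bigl||Y_\infty|-K|X_\infty|\bigr|^p \;=\;\bigl[G_p(X_\infty,Y_\infty)\bigr]^{p/2}\bigl[(|X_\infty|+|Y_\infty|)^{p-2}\bigr]^{-p/2}(|X_\infty|+|Y_\infty|)^{p(p-2)/2\cdot(-1)+\cdots}
\end{equation*}
and applies H\"older with exponents $2/p$ and $2/(2-p)$ against the $L^p$-norms of $X$ and $Y$ (which are comparable by \eqref{BWin}). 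This produces the exponent $\e^{1/2}$ and, after tracking constants from the Ba\~nuelos--Wang function, the prefactor $c_p$.

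\textbf{Part (ii) $p>2$.} The same framework is applied to the orthogonal analogue of Burkholder's $p>2$ function. Here the gap behaves as $G_p(x,y)\asymp (|y|-K|x|)^p$, so no H\"older step is needed and one directly obtains the exponent $\e^{1/p}$. The constants $(p-1)[(2+\sqrt 2)p^2/((p-1)(p-2))]^{1/p}$ come from quantifying the Hessian/concavity margin in the Ba\~nuelos--Wang function as $p\downarrow 2$ (where it degenerates like $p-2$) and as $p\to\infty$ (contributing the factor $p-1$).

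\textbf{Part (iii) and optimality.} The non-existence assertion at $p=2$ follows from exactly the same Walsh-Paley type example as in Theorem \ref{mainthm}, since the pair $X_t=\chi_{[0,1]}$, $Y_t=\chi_{[0,1]}$ for $t<1$ and $X_1=2\chi_{[0,1/2]}$, $Y_1=2\chi_{(1/2,1]}$ are orthogonal (their square brackets jump in disjoint supports). The sharpness of the orders $\e^{1/2}$, $\e^{1/p}$ and of the blow-up of $c_p$ at $p=2$ will be demonstrated, as in Theorem \ref{mainthmf}, on the extremal family of Pichorides for the Hilbert transform (the case $d=1$ of \eqref{inn}), using the functions $|z|^\beta\chi_{\mathbb{D}}$ (suitably modified) and letting $\beta\downarrow -2/p$; the martingale realization goes through conjugate harmonic functions on the disk and the standard lifting to orthogonal Brownian stochastic integrals.

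\textbf{Main obstacle.} The principal difficulty is the construction of $U_p$ together with an \emph{explicit} lower bound for $U_p-V_p$ with the numerical constants advertised in the statement. Unlike the Burkholder function for plain differential subordination, the Ba\~nuelos--Wang function is defined through a conformal change of variables on a strip/sector and its Hessian does not admit a clean factorization. One must carefully Taylor-expand $U_p-V_p$ near the ray $\{|y|=K|x|\}$ and extract a quadratic (respectively $p$-th power) lower bound uniform on compacta; tracking the dependence of this Taylor remainder on $p$ at the endpoints $p=2$, $p\to\infty$, $p\to 1$ is what produces, and simultaneously optimises, the factor $c_p$.
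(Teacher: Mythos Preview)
Your overall strategy for parts (i) and (ii) is correct and coincides with the paper's: one uses the Pichorides-type superharmonic function $V_p(x,y)=-\beta_pR^p\cos(p\theta)$ for $1<p<2$ (and a piecewise modification for $p>2$), establishes a majorization of the form $V_p\geq |y|^p-K^p|x|^p+G_p$ with $G_p\asymp(|y|-K|x|)^2(|x|+|y|)^{p-2}$ (respectively $\asymp||y|-K|x||^p$), and then applies Theorem~\ref{Wang2}, Fatou, and H\"older exactly as you outline. Your identification of the gap exponents and the H\"older step is accurate.

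There is, however, a genuine error in part (iii). The discrete pair you propose is \emph{not} orthogonal: with $X_0=Y_0=1$ and $X_1=2\chi_{[0,1/2]}$, $Y_1=2\chi_{(1/2,1]}$, the jumps $\Delta X_1=2\chi_{[0,1/2]}-1$ and $\Delta Y_1=2\chi_{(1/2,1]}-1$ are nonzero \emph{everywhere} and satisfy $\Delta X_1\Delta Y_1\equiv -1$, so $[X,Y]_1=X_0Y_0-1=0\neq 1=[X,Y]_0$. The square bracket is not constant. The paper instead takes $(X,Y)$ to be planar Brownian motion started at $0$ and stopped on the unit circle; then $[X,Y]\equiv 0$ automatically, $\|Y\|_2=\|X\|_2$, and $\||Y_\infty|-|X_\infty|\|_2>0$, which kills any stability bound at $p=2$.

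For the sharpness in (i) and (ii) the paper's route also differs from your sketch. Rather than lifting Lehto/Pichorides radial functions, the paper runs planar Brownian motion in a wedge $D$ of aperture just under $\pi/p$ and applies It\^o's formula to the harmonic functions $R^p\cos p\theta$, $R^p\sin p\theta$ to compute $\|Y\|_p^p-(\cot\frac{\pi}{2p^*}-\e)^p\|X\|_p^p$ exactly; the wedge angle is then tuned (with a small perturbation $\eta$ of the two edges for $p<2$) so that this difference is positive while $\||Y_\infty|-K|X_\infty|\|_p$ stays of the claimed order. Your proposed $|z|^\beta\chi_{\mathbb{D}}$ family would need substantial reworking to produce an \emph{orthogonal} martingale pair with the right quantitative control; the wedge construction is the natural object here and delivers the constants directly.
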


As in the non-orthogonal case, the third part of the above theorem is easy. For example, consider a two-dimensional Brownian motion $(X,Y)$ started at the origin and stopped upon reaching the boundary of the unit disc. Then the inequality $||Y||_2\geq (1-\e)||X||_2$ is satisfied for all $\e>0$, while $|||Y_\infty|-|X_\infty|||_2>0$, so the inequality $\big|\big| |Y_\infty|-|X_\infty|\big|\big|_2\leq c_2\e^{\kappa}||X||_2$ does not hold for sufficiently small $\e$, regardless of the values of $c_2$ and $\kappa$. Thus, we need to prove (i) and (ii), which will done below in Subsections \ref{p<2o} and \ref{p>2o}.

Let us now describe our approach. The proof of the inequalities \eqref{main<2} and \eqref{main>2} will be based on Burkholder's method (or Bellman function method): we will deduce the validity of these estimates from the existence of certain special functions, satisfying appropriate majorization and concavity. See \cite{B1} or \cite{Os} for the detailed description of the technique. Our approach exploits  the following statements, which are slight generalizations of the results of Wang \cite{W} (see Lemma 3 and Proposition 1 there).

\begin{theorem}\label{Wang}
Let $U$ be a continuous function on $\mathbb{H}\times \mathbb{H}$ satisfying the following conditions.

(i) The function $U$ is bounded on bounded sets, is of class $C^1$ on $\mathbb{H}\times \mathbb{H}\setminus \{|x||y|=0\}$ and of class $C^2$ on $S_i$, $i\geq 1$, where $S_i$ is a sequence of open connected sets such that the union of closures of $S_i$ is $\mathbb{H}\times \mathbb{H}$.

(ii) For each $i$, there is a nonnegative measurable function $c_i$ on $S_i$ such that for any $(x,y)\in S_i$ and any $h,\,k\in\mathbb{H}$,
\begin{equation}\label{conv}
\langle U_{xx}(x,y)h,h\rangle+2\langle U_{xy}(x,y)h,k\rangle+\langle U_{yy}(x,y)k,k\rangle\leq -c_i(x,y)(|h|^2-|k|^2).
\end{equation} 
Furthermore, for each $i$ and $n$ there is a finite constant $M_{i,n}$ such that 
$$ \sup\{c_i(x,y):(x,y)\in S_i,\,1/n\leq |x|+|y|<n\}\leq M_{i,n}.$$

\noindent Then for $t\geq 0$ and any pair $X$, $Y$ of $\mathbb{H}$-valued martingales such that $Y$ is differentially subordinate to $X$, there is a nondecreasing sequence $(\tau_n)_{n\geq 1}$ of stopping times converging to infinity such that
$$ \E U(X_{\tau_n\wedge t},Y_{\tau_n\wedge t})\leq \E U(X_0,Y_0),\qquad n=1,\,2,\,\ldots.$$
\end{theorem}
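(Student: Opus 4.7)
The plan is to adapt the localized It\^o-calculus argument of Wang \cite{W}, following Burkholder's method: apply the It\^o formula to $U(X_t, Y_t)$ after suitable mollification and stopping, show that the resulting drift term is nonpositive by combining condition (ii) with the differential subordination, and then take expectations.

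First I would introduce the stopping times
$$\tau_n = \inf\bigl\{t\geq 0 : |X_t|+|Y_t| \geq n\bigr\} \wedge n,$$
which are nondecreasing, tend almost surely to $+\infty$, and render the stopped processes bounded. To handle the limited regularity of $U$, mollify: set $U^\delta = U \ast \phi_\delta$, where $\phi_\delta$ is a smooth approximate identity (after first reducing to a finite-dimensional setting by projecting $\mathbb{H}$ onto a large subspace containing the relevant range, then passing to the limit at the end). The mollified function $U^\delta$ is smooth, so the It\^o formula for semimartingales produces
\begin{align*}
U^\delta(X_{\tau_n\wedge t}, Y_{\tau_n\wedge t}) = U^\delta(X_0, Y_0) + M_t^{n,\delta} + I_t^{n,\delta} + J_t^{n,\delta},
\end{align*}
with $M^{n,\delta}$ a true martingale (by boundedness of $\nabla U^\delta$ on the stopped range), $I^{n,\delta}$ the continuous quadratic-variation drift, and $J^{n,\delta}$ the jump corrections.

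Next I would bound $I^{n,\delta}$ and $J^{n,\delta}$ using condition (ii) and differential subordination. The integrand in $I^{n,\delta}$ is the Hessian quadratic form of $U^\delta$ tested against $(dX^c, dY^c)$; by a mollified version of \eqref{conv} and the pointwise inequality $d[X,X]^c - d[Y,Y]^c \geq 0$ (immediate from differential subordination applied to the continuous parts of the square brackets), this is pointwise bounded above by a nonpositive term, modulo an error vanishing with $\delta$. For the jumps, Taylor's integral formula rewrites each summand as
\begin{align*}
\int_0^1 (1-\sigma)\, Q^\delta\bigl(X_{s-}+\sigma \Delta X_s,\; Y_{s-}+\sigma \Delta Y_s\bigr)\bigl(\Delta X_s, \Delta Y_s\bigr)\,d\sigma,
\end{align*}
where $Q^\delta$ denotes the Hessian quadratic form of $U^\delta$. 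Differential subordination forces $|\Delta Y_s|\leq|\Delta X_s|$ at every jump (since the jump of $[X,X]-[Y,Y]$ is $|\Delta X_s|^2-|\Delta Y_s|^2\geq 0$), so condition (ii), applied along this segment, yields nonpositivity in the limit $\delta \to 0$.

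Taking expectations gives $\E U^\delta(X_{\tau_n\wedge t}, Y_{\tau_n\wedge t}) \leq \E U^\delta(X_0, Y_0)$, and sending $\delta \downarrow 0$, the continuity of $U$ and its boundedness on bounded sets, combined with dominated convergence, yield $\E U(X_{\tau_n\wedge t}, Y_{\tau_n\wedge t}) \leq \E U(X_0, Y_0)$. The principal technical obstacle is the combination of the singular set $\{|x||y|=0\}$ (where $U$ need only be continuous) with the polyhedral decomposition $\bigcup_i S_i$ across whose boundaries the Hessian may be discontinuous. The mollification handles both, but one must verify that the distributional Hessian of $U$ has no singular-continuous part supported on the $\partial S_i$; this follows from the global $C^1$-regularity off the singular set, so that \eqref{conv} transfers to $U^\delta$ up to a vanishing error. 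A secondary subtlety is controlling the contribution from neighborhoods of $\{|x||y|=0\}$, which is where the uniform local bounds $M_{i,n}$ on $c_i$ enter, together with the continuity of $U$, to ensure the passage to the limit in $\delta$ is legitimate.
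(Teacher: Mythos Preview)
Your proposal is correct and follows essentially the same approach as the paper, which in fact omits the argument entirely and simply refers the reader to \cite{BW} and \cite{W} (see also \cite{Os}); the mollification-plus-localization scheme you outline, with the Hessian inequality \eqref{conv} controlling both the continuous and jump parts via differential subordination, is precisely Wang's argument in \cite{W}. One small refinement: your stopping times should also keep the process bounded away from the origin (e.g.\ $\tau_n=\inf\{t:|X_t|+|Y_t|\notin[1/n,n)\}\wedge n$), since the bounds $M_{i,n}$ in condition (ii) are only available on the annuli $1/n\leq|x|+|y|<n$, and this is needed to control the mollified Hessian uniformly before letting $\delta\to 0$.
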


\begin{theorem}\label{Wang2}
Let $U$ be a continuous function on $\R^2$ satisfying the following conditions.

(i) The function $U$ is bounded on bounded sets and of class $C^1$ on $\R^2\setminus \{|x||y|=0\}$.

(ii) The function $U$ is superharmonic and, for any fixed $x$, the function $U(x,\cdot)$ is convex. 

\noindent Then for $t\geq 0$ and any pair $X$, $Y$ of real-valued orthogonal martingales such that $Y$ is differentially subordinate to $X$, there is a nondecreasing sequence $(\tau_n)_{n\geq 1}$ of stopping times converging to infinity such that
$$ \E U(X_{\tau_n\wedge t},Y_{\tau_n\wedge t})\leq \E U(X_0,Y_0),\qquad n=1,\,2,\,\ldots.$$
\end{theorem}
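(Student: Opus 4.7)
The plan is to apply It\^o's formula to $U(X_t,Y_t)$, show that the drift part is nonpositive, and absorb the local martingale part using a suitable localizing sequence $(\tau_n)_{n\geq 1}$. Because $U$ is only of class $C^1$ globally and may fail to be $C^2$ on the axes $\{|x||y|=0\}$, I would first mollify, setting $U^\delta=U*\varphi_\delta$ with a smooth radial approximation of the identity $\varphi_\delta$ on $\R^2$. Convolution commutes with partial derivatives, so $U^\delta\in C^\infty(\R^2)$ inherits both superharmonicity (from $\Delta U\leq 0$ in the distributional sense) and convexity of $y\mapsto U^\delta(x,y)$ for each fixed $x$; it is also bounded on bounded sets uniformly in $\delta$ by the corresponding hypothesis on $U$.

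Next I apply It\^o's formula to $U^\delta(X_t,Y_t)$ up to the stopping time $\tau_n=\inf\{t\geq 0:|X_t|+|Y_t|\geq n\}$, which makes the stopped processes bounded and turns the local martingale part into a true martingale of mean zero. The orthogonality hypothesis forces $[X,Y]\equiv[X,Y]_0$, so the mixed-derivative contribution to the drift drops out, and the continuous part of the drift equals
\begin{align*}
\tfrac{1}{2}\int_0^t U^\delta_{xx}\,\mathrm{d}[X,X]^c&+\tfrac{1}{2}\int_0^t U^\delta_{yy}\,\mathrm{d}[Y,Y]^c \\
&= \tfrac{1}{2}\int_0^t\bigl(U^\delta_{xx}+U^\delta_{yy}\bigr)\mathrm{d}[Y,Y]^c+\tfrac{1}{2}\int_0^t U^\delta_{xx}\,\mathrm{d}\bigl([X,X]^c-[Y,Y]^c\bigr).
\end{align*}
The first integral on the right is nonpositive by superharmonicity of $U^\delta$. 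For the second, convexity in $y$ gives $U^\delta_{yy}\geq 0$, whence $U^\delta_{xx}\leq -U^\delta_{yy}\leq 0$; since $[X,X]-[Y,Y]$ is nondecreasing by differential subordination, that integral is nonpositive as well. The jump contribution is controlled jump-by-jump, using $|\Delta Y_t|\leq|\Delta X_t|$, the orthogonality of the jumps, and the convexity of $y\mapsto U^\delta(X_{t-},y)$ together with the superharmonicity of $U^\delta$, following the argument of Wang \cite{W} at the smooth level.

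Taking expectations yields $\E U^\delta(X_{\tau_n\wedge t},Y_{\tau_n\wedge t})\leq \E U^\delta(X_0,Y_0)$, and then letting $\delta\downarrow 0$ and using the continuity of $U$, its boundedness on bounded sets, and dominated convergence on the stopped (hence bounded) processes recovers the stated inequality with $U$ itself. The main obstacle I foresee is ensuring that the mollification preserves \emph{both} superharmonicity and fiberwise convexity simultaneously so that the smoothed function reproduces Wang's jump estimate without losing the differential-subordination-friendly sign of the drift --- a radial $\varphi_\delta$ does the job, but this has to be verified with care so that the limit $\delta\downarrow 0$ passes through both the continuous drift and the jump parts and so that the inequality is not lost on the axes where the original $U$ is merely $C^1$.
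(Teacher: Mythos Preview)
The paper does not actually prove this theorem; immediately after stating it the authors write that ``one needs to repeat the reasoning appearing in \cite{BW} and \cite{W} (see also \cite{Os})'' and omit the argument. Your sketch is precisely the standard route from those references --- mollify, apply It\^o's formula, use orthogonality to kill the mixed term, split the remaining drift via superharmonicity and the sign of $U_{xx}$, localize, and pass to the limit --- so there is nothing to compare against and your outline is correct.

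Two places deserve one more line each. First, your closing worry is unfounded: convolution with any nonnegative mollifier preserves both superharmonicity (since $\Delta(U*\varphi_\delta)=(\Delta U)*\varphi_\delta\le 0$ distributionally) and fiberwise convexity in $y$ (an average of convex functions is convex), so nothing delicate happens there. Second, the jump part can be made explicit rather than deferred to \cite{W}. Orthogonality means $[X,Y]$ is constant, hence $\Delta X_s\,\Delta Y_s=0$ at every jump time; combined with $|\Delta Y_s|\le|\Delta X_s|$ from differential subordination this forces $\Delta Y_s=0$, so $Y$ is continuous and only $X$ jumps. Each jump then contributes
\[
U^\delta(X_{s-}+\Delta X_s,Y_s)-U^\delta(X_{s-},Y_s)-U^\delta_x(X_{s-},Y_s)\,\Delta X_s\le 0
\]
by the concavity of $x\mapsto U^\delta(x,y)$, which you have already derived from $U^\delta_{xx}\le -U^\delta_{yy}\le 0$. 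Finally, your use of $d([X,X]^c-[Y,Y]^c)\ge 0$ is legitimate: a nondecreasing c\`adl\`ag process has nondecreasing continuous part, so the differential subordination hypothesis on $[X,X]-[Y,Y]$ passes to the continuous brackets.
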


For the proof, one needs to repeat the reasoning appearing in \cite{BW} and \cite{W} (see also \cite{Os}); we will omit this argumentation, leaving it to the interested reader.

\subsection{Sharpness of the martingale inequalities,  a discrete-time example}\label{example} Our starting point is the construction of a certain special discrete-time martingale pair, which will be used in both cases $p<2$ and $p>2$ of Theorem \ref{mainthm}. For the sake of clarity and to ease the computations, we have decided to split the construction into two stages. Fix a large positive number $K>1$, a large positive integer $N$, a small number $\eta>0$ and set $\delta=(K^{1/N}-1)/2$, so that $(1+2\delta)^N=K$ and $2N\delta\approx \log K$ for large $N$.

\emph{First stage.}  Consider the Markov martingale $(F,G)$ with a distribution uniquely determined by the following requirements.

\begin{itemize}
\item[(i)] $(F_0,G_0)\equiv (0,0)$, $(F_1,G_1)\in \left\{(1/2,-1/2),(-1/2,1/2)\right\}$.

\item[(ii)] For $y\neq 0$, the point $(y,-y)$ leads to $(2y,0)$ or to $(0,-2y)$.

\item[(iii)] For $|y|< K$, the point $(0,y)$ leads to $(y/p,(p-1)y/p)$ or to $(-\delta y,y+\delta y)$.

\item[(iv)] For $y\neq 0$, the point $(-\delta y,y+\delta y)$ leads to $(0,y+2\delta y)$ or to $(-(y+2\delta y)/p,(p-1)(y+2\delta y)/p)$.

\item[(v)] All the remaining points are absorbing.
\end{itemize}

\begin{figure}[htbp]
\begin{center}
\includegraphics[scale=0.3]{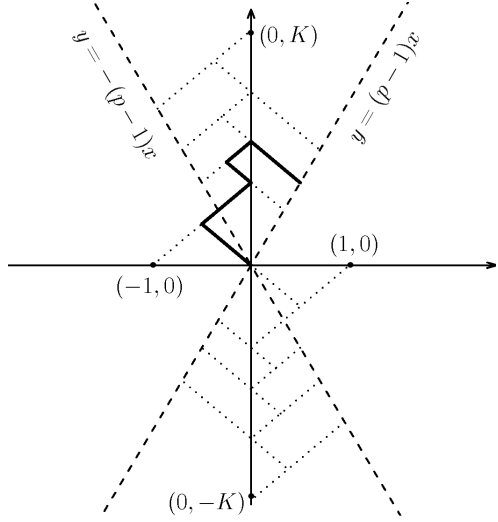}

\caption{
Markov martingale $(F,G)$, first stage. The dotted lines describe the possible directions for the evolution of the process. The bold line refers to an exemplary trajectory of $(F,G)$, which starts from $(0,0)$ and moves to $(-1/2,1/2)$, $(0,1)$, $(-\delta,1+\delta)$, $(0,1+2\delta)$ and $((1+2\delta)/p,(p-1)(1+2\delta)/p)$ in the consecutive steps.}\label{traject}
\end{center}
\end{figure} 

Some remarks are in order. First, we do not need to specify the transition probabilities, they are uniquely determined by the condition that $(F,G)$ is a martingale. Note that $G$ is the transform of $F$ by the deterministic sequence $\{(-1)^n\}_{n=0}^\infty$. Clearly, this condition is symmetric: $F$ is the transform of $G$ by the same deterministic sequence. We will exploit this symmetry later on. Finally, observe that the martingale pair $(F,G)$ is finite in the sense that it terminates after a finite number of steps.

Let us look at the distribution of the random variable $(|F_\infty|,|G_\infty|)$ (note that norms are applied to both $F_\infty$ and $G_\infty$). It takes the value $(1,0)$ with the probability
\begin{equation}\label{prob0}
 \mathbb{P}\big((|F_\infty|,|G_\infty|)=(1,0)\big)=\mathbb{P}\big((|F_2|,|G_2|)=(1,0)\big)=\frac{1}{2}.
\end{equation}
Furthermore, we have
\begin{equation}\label{prob1}
 \mathbb{P}\big((|F_\infty|,|G_\infty|)=(1/p,(p-1)/p)\big)=\mathbb{P}\big((|F_3|,|G_3|)=(1/p,(p-1)/p)\big)=\frac{1}{2}\cdot \frac{p\delta}{p\delta+1}.
\end{equation}
Next, for $k=1,\,2,\,\ldots,\,N-1$ we have
\begin{equation}\label{prob2}
\begin{split}
&\mathbb{P}\big((|F_\infty|,|G_\infty|)=((1+2\delta)^k/p,(1+2\delta)^k\cdot (p-1)/p)\big)\\
&=\mathbb{P}\big((|F_{2k+2}|,|G_{2k+2}|)=((1+2\delta)^k/p,(1+2\delta)^k\cdot (p-1)/p)\big)\\
&\quad +\mathbb{P}\big((|F_{2k+3}|,|G_{2k+3}|)=((1+2\delta)^k/p,(1+2\delta)^k\cdot (p-1)/p)\big)\\
&=\frac{1}{2}\left[\frac{1-(p-2)\delta}{(p\delta+1)(1+2\delta)}\right]^{k-1}\frac{p\delta}{(p\delta+1)(1+2\delta)}\cdot\left\{1+\frac{1-(p-2)\delta}{1+p\delta}\right\}.
\end{split}
\end{equation}
Let us explain the latter equality more precisely, focusing on the term $$\mathbb{P}\big((|F_{2k+2}|,|G_{2k+2}|)=((1+2\delta)^k/p,(1+2\delta)^k\cdot (p-1)/p)\big).$$ The event occurs if and only if $F_0=F_2=F_4=\ldots=F_{2k}=0$, $|F_1|=1/2$, $|F_{2n+1}|=\delta(1+2\delta)^{n-1}$ for $n=1,\,2,\,\ldots,\,k$ and $|F_{2k+2}|=(1+2\delta)^k/p$. Directly from the conditions (i)-(iv) above, we see that $\mathbb{P}(|F_2|=0)=1/2$ and
$$ \mathbb{P}(|F_{2n+2}|=0\big||F_{2n}|=0)=\frac{1-(p-2)\delta}{(p\delta+1)(1+2\delta)}.$$
Since
$$ \mathbb{P}(|F_{2k+2}|=(1+2\delta)^k/p||F_{2k}|=0)=\frac{1}{p\delta+1}\cdot \frac{p\delta}{1+2\delta},$$
we get
\begin{align*}
& \mathbb{P}\big((|F_{2k+2}|,|G_{2k+2}|)=((1+2\delta)^k/p,(1+2\delta)^k\cdot (p-1)/p)\big)\\
&=\frac{1}{2}\left[\frac{1-(p-2)\delta}{(p\delta+1)(1+2\delta)}\right]^{k-1}\frac{p\delta}{(p\delta+1)(1+2\delta)}
\end{align*}
A similar calculation shows that
\begin{align*}
 \mathbb{P}\big((|F_{2k+3}|,|G_{2k+3}|)&=((1+2\delta)^k/p,(1+2\delta)^k\cdot (p-1)/p)\big)\\
&=\frac{1}{2}\left[\frac{1-(p-2)\delta}{(p\delta+1)(1+2\delta)}\right]^{k}\frac{p\delta}{p\delta+1}
\end{align*}
and \eqref{prob2} follows.

The last possibility is for $(|F_\infty|,|G_\infty|)$ to reach the state $(0,K)$. An analogous analysis to that above yields
\begin{equation}\label{prob3}
 \mathbb{P}\left((|F_\infty|,|G_\infty|)=(0,K)\right)=\frac{1}{2}\left[\frac{1-(p-2)\delta}{(p\delta+1)(1+2\delta)}\right]^N.
\end{equation}

\emph{Second stage.} Now we modify slightly the martingale $(F,G)$ if it terminates on the lines $y=\pm(p-1)x$. Namely, if $(F,G)$ reached the final value $(y,(p-1)y)$ at some step, then it waits for a time unit, and then goes to $(y-\eta y,(p-1)y+\eta y)$ or $(y+\eta y,(p-1)y-\eta y)$. Similarly, 
when $(F,G)$ reaches the point $(-y,(p-1)y)$ at a certain point in time  then it stays there for a unit of time, and then goes to $(-(y-\eta y),(p-1)y+\eta y)$ or to $(-(y+\eta y),(p-1)y-\eta y)$.

The reason why  the martingale ``waits'' for a unit of time is to preserve the property that $G$ is  the transform of $F$ by the sequence $\{(-1)^n\}_{n=0}^\infty$. The above modification does not affect the probabilities \eqref{prob0} and \eqref{prob3}. On the other hand, the conditions \eqref{prob1} and \eqref{prob2} do change since  the probability of getting to the point of the form $((1+2\delta)^k/p,(1+2\delta)^k\cdot (p-1)/p)$ is split into two halves, corresponding to the new final points
$$ ((1+2\delta)^k/p\cdot (1\pm \eta),(1+2\delta)^k\cdot (p-1)/p \cdot (1\mp \eta/(p-1))).$$

Having completed the construction, we analyze $||F_\infty||_p$, $||G_\infty||_p$ and $\big|\big||G_\infty|-(p-1)|F_\infty|\big|\big|_p$. Denoting the probability in \eqref{prob2} by $p_{k,\delta}$, we get
\begin{align*}
 ||F_\infty||_p^p&=\frac{1}{2}+\frac{1}{2}\cdot \frac{p\delta}{p\delta+1}\left[\frac{(1-\eta)^p+(1+\eta)^p}{2p^p}\right]\\
&\quad +\sum_{k=1}^{N-1} p_{k,\delta}\cdot \left(\frac{(1+2\delta)^k}{p}\right)^p\cdot \frac{(1-\eta)^p+(1+\eta)^p}{2}
\end{align*}
and, omitting the event studied in \eqref{prob1}, we have  the following lower bound for $G$:
\begin{align*}
 ||G_\infty||_p^p&\geq \sum_{k=1}^{N-1} p_{k,\delta}\cdot \left(\frac{(1+2\delta)^k(p-1)}{p}\right)^p\cdot \frac{(1-\eta/(p-1))^p+(1+\eta/(p-1))^p}{2}\\
 &\quad +K^p\cdot \frac{1}{2}\left[\frac{1-(p-2)\delta}{(p\delta+1)(1+2\delta)}\right]^N.
\end{align*}
Concerning $\big|\big||G_\infty|-(p-1)|F_\infty|\big|\big|_p$, we will exploit later two lower bounds for this expression. The first inequality is trivial: just look at the event in \eqref{prob0} to obtain 
\begin{equation}\label{dif0}
 \big|\big||G_\infty|-(p-1)|F_\infty|\big|\big|_p^p\geq \frac{1}{2}(p-1)^p.
\end{equation}
To get the second bound, note that on the set where 
$$ (|F_\infty|,|G_\infty|)=((1+2\delta)^k/p\cdot (1\pm \eta),(1+2\delta)^k\cdot (p-1)/p \cdot (1\mp \eta/(p-1))),$$
we have $|G_\infty|-(p-1)|F_\infty|=\mp (1+2\delta)^k\eta.$ Therefore,
\begin{equation}\label{dif1}
 \big|\big||G_\infty|-(p-1)|F_\infty|\big|\big|_p^p\geq\sum_{k=1}^{N-1} p_{k,\delta}(1+2\delta)^{kp}\eta^p.
\end{equation}

To simplify the later calculations, let us carry out a limiting procedure, by sending $N$ to infinity (but keeping $K$ fixed). Then $\delta$ converges to $0$; to see how the above sums involving $p_{k,\delta}$ behave, observe that the ratio of these geometric sums is given by 
$$ \left[\frac{1-(p-2)\delta}{(p\delta+1)(1+2\delta)}\right](1+2\delta)^{p},$$
which is of order $1+o(\delta)$ as $\delta\to 0$. Consequently (recall that $\lim_{N\to \infty} 2N\delta=\log K$),
\begin{equation}\label{asymptF}
 ||F_\infty||_p^p \xrightarrow{N\to \infty} \frac{1}{2}+\frac{\log K}{2p^{p-1}}\cdot \frac{(1-\eta)^p+(1+\eta)^p}{2}.
\end{equation}
Similarly,
\begin{equation}\label{asymptG}
||G_\infty||_p^p \xrightarrow{N\to \infty}\frac{1}{2}+\frac{(p-1)^p\log K}{2p^{p-1}}\cdot \frac{(1-\eta/(p-1))^p+(1+\eta/(p-1))^p}{2}
\end{equation}
and
\begin{equation}\label{asymptD}
 \liminf_{N\to\infty}\big|\big||G_\infty|-(p-1)|F_\infty|\big|\big|_p^p\geq \frac{p\eta^p\log K}{2}.
\end{equation}

\subsection{Proof of Theorem \ref{mainthm} for $1<p<2$}\label{p<2no}

We will need the following fact.

\begin{lemma}\label{tech2}
For any $1<p\leq 2$ we have 
$$ \frac{p^{2-p}(p-1)^{p-1}(2-p)}{2}\geq 1-p\left(1-\frac{1}{p}\right)^{p-1}.$$
\end{lemma}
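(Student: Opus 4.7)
Both sides of the inequality can be expressed through the single quantity $B(p) := p^{2-p}(p-1)^{p-1} = p(1-1/p)^{p-1}$: the left side equals $B(p)(2-p)/2$ and the right side equals $1 - B(p)$. Rearranging, the claim is equivalent to
\[
G(p) := (4-p)\, p\!\left(1 - \tfrac{1}{p}\right)^{p-1} \geq 2, \qquad p \in (1,2].
\]
Since $G(2) = 2$, equality holds at the right endpoint, and it therefore suffices to show that $G$ is nonincreasing on $(1,2]$.

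A direct differentiation yields
\[
(\log G)'(p) = \log\!\left(1 - \tfrac{1}{p}\right) + \frac{2}{p} - \frac{1}{4-p}.
\]
The logarithmic term blows up as $p \downarrow 1$ and is the main obstruction; I would control it using the truncated-series bound $\log(1-x) \leq -x - x^2/2 - x^3/3$ valid for $x \in [0,1)$, which holds because $\log(1-x) = -\sum_{k \geq 1} x^k/k$ is a sum of negative terms. Substituting $x = 1/p$ and clearing denominators by multiplying through by $6p^3(4-p) > 0$, the inequality $(\log G)'(p) \leq 0$ reduces to the polynomial estimate
\[
P(p) := 12 p^3 - 27 p^2 + 10 p + 8 \geq 0 \qquad \text{on } [1,2].
\]

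The polynomial check is elementary: $P(1) = 3$, $P(2) = 16$, and the derivative $P'(p) = 36p^2 - 54p + 10$ has a unique root in $(1,2)$ at $p^\ast = (9+\sqrt{41})/12 \approx 1.28$, which is the sole local minimum of $P$ on $[1,2]$; a short direct calculation gives $P(p^\ast) > 0$. The only subtlety is choosing the right truncation order: the two-term approximation $\log(1-x) \leq -x - x^2/2$ produces instead the inequality $4p^2 - 9p + 4 \geq 0$, which fails on $(1, (9+\sqrt{17})/8) \approx (1, 1.64)$, so the cubic term is genuinely needed. With the cubic truncation in place, the remaining verification is routine.
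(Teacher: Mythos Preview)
Your proof is correct and follows the same overall strategy as the paper: both arguments rewrite the inequality as $G(p):=(4-p)\,p(1-1/p)^{p-1}\geq 2$, observe that $G(2)=2$, and then show $G$ is nonincreasing on $(1,2]$ by verifying that
\[
(\log G)'(p)=\log\!\left(1-\tfrac{1}{p}\right)+\frac{2}{p}-\frac{1}{4-p}\leq 0.
\]
The difference lies only in how this last inequality is established. You bound $\log(1-1/p)$ by the cubic Taylor truncation $-1/p-1/(2p^2)-1/(3p^3)$ and reduce to the polynomial inequality $P(p)=12p^3-27p^2+10p+8\geq 0$ on $[1,2]$, which you verify by locating the unique interior critical point and checking positivity there (indeed $P(p^\ast)=(387-41\sqrt{41})/72>0$). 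The paper instead observes that the function $x\mapsto -2x+\log(1+x)$ is increasing on $(-1,-1/2]$, so $\tfrac{2}{p}+\log(1-\tfrac{1}{p})\leq 1-\log 2$ (its value at $p=2$); combined with $-\tfrac{1}{4-p}\leq -\tfrac{1}{3}$, this gives $(\log G)'(p)\leq \tfrac{2}{3}-\log 2<0$. The paper's route is shorter and avoids any polynomial analysis, while your approach is more systematic and would adapt more readily to variants where a clean monotonicity trick is not available. Your remark that the quadratic truncation is insufficient is also a nice diagnostic.
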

\begin{proof}
The claim is equivalent to $$p\left(1-\frac{1}{p}\right)^{p-1}\left(2-\frac{p}{2}\right)\geq 1,$$ or
$$ \log p+(p-1)\log\left(1-\frac{1}{p}\right)+\log\left(2-\frac{p}{2}\right)\geq 0.$$
When $p=2$, both sides are equal; therefore, we will be done if the derivative of the left-hand side is nonpositive on  the interval $(1,2)$. This amounts to verifying that
$$ \frac{2}{p}+\log\left(1-\frac{1}{p}\right)-\frac{1}{4-p}\leq 0.$$
However, one easily checks that for any $x\in (-1,-1/2)$ we have  $-2x+\log(1+x)\leq 1+\log(1/2)$ (the left-hand side is increasing as a function of $x\in (-1,-1/2)$ and both sides are equal for $x=-1/2$). Therefore, plugging $x=-1/p$ we get
$$ \frac{2}{p}+\log\left(1-\frac{1}{p}\right)-\frac{1}{4-p}\leq 1+\log\frac{1}{2}-\frac{1}{4-p}\leq 1+\log\frac{1}{2}-\frac{1}{3}<0,$$
which yields the desired assertion.
\end{proof}

In the proof of the inequality \eqref{main<2} we will exploit the following special function $U_p:\mathbb{H}\times \mathbb{H}\to \R$:
$$ U_p(x,y)=p\left(1-\frac{1}{p}\right)^{p-1}((p-1)|y|-|x|)(|x|+|y|)^{p-1}.$$
This function  was introduced by Burkholder in \cite{B1}.  In \cite{W}, Wang  checked that it satisfies all the requirements of Theorem \ref{Wang}. To establish \eqref{main<2}, we will need the following additional inequality.
\begin{lemma}
For any $x,\,y\in \mathbb{H}$ we have
\begin{equation}\label{maj<2}
U_p(x,y)\geq (p-1)^p|y|^p-|x|^p+\left(1-p\left(1-\frac{1}{p}\right)^{p-1}\right)\frac{((p-1)|y|-|x|)^2}{(|x|+|y|)^{2-p}}.
\end{equation}
\end{lemma}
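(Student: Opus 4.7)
Since both sides of \eqref{maj<2} depend only on $s:=|x|$ and $t:=|y|$, and both are homogeneous of degree $p$ in $(s,t)$, I would reduce to a one-variable inequality by normalizing $s+t=1$ and setting $u=s\in[0,1]$. With the abbreviations $b:=p(1-1/p)^{p-1}=p^{2-p}(p-1)^{p-1}$ and $C_p=1-b$, the claim becomes $F(u)\ge 0$ on $[0,1]$, where
\[
F(u):=b(p-1-pu)-(p-1)^p(1-u)^p+u^p-C_p(p-1-pu)^2.
\]
Two zeros of $F$ are visible without work: at $u=1$ (corresponding to $t=0$) the identity $b+C_p=1$ forces $F(1)=0$, and at $u_0:=(p-1)/p$ (the equality point $(p-1)t=s$ of the underlying Burkholder inequality) both sides of \eqref{maj<2} collapse, giving $F(u_0)=0$.

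A direct differentiation shows that $u_0$ is in fact a \emph{double} zero: $F'(u_0)=0$, and the second-order condition is
\[
F''(u_0)=2p^2\left[\tfrac{1}{2}\,p^{2-p}(p-1)^{p-1}(2-p)-C_p\right]\geq 0,
\]
which is \emph{exactly} the inequality proved in Lemma \ref{tech2}. Similarly $F'(1)=-pC_p<0$, so $F>0$ just to the left of $1$. Finally, $F(0)=(p-1)^p(p^{2-p}-1)-C_p(p-1)^2$, which after rearrangement reduces to $(p-1)^{p-2}(p^{3-p}-1)\ge 1$, equivalently $(1+r)^{2-r}\ge 1+r^{1-r}$ for $r=p-1\in(0,1)$, a short calculus exercise with equality at the endpoints. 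Thus the local picture near all three special points is consistent with the desired bound.

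To promote this local information to the global inequality $F\ge 0$ on $[0,1]$, I would analyze the sign of the higher derivatives of $F$. A direct computation yields
\[
F'''(u)=p(p-2)\bigl[(p-1)^{p+1}(1-u)^{p-3}+(p-1)u^{p-3}\bigr],
\]
which is strictly negative on $(0,1)$ since $p-2<0$. Hence $F''$ is strictly decreasing on $(0,1)$; combined with $F''(0^+)=+\infty$ and $F''(1^-)=-\infty$ it has a unique zero $u^*\in[u_0,1)$. Consequently $F'$ is unimodal (increasing on $(0,u^*)$, decreasing on $(u^*,1)$), so $F'$ has at most two zeros on $[0,1]$, namely $u_0$ and some $u^{**}\in(u^*,1)$. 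Therefore $F$ is decreasing on $[0,u_0]$, increasing on $[u_0,u^{**}]$, and decreasing on $[u^{**},1]$; together with $F(0)\ge 0$ and $F(u_0)=F(1)=0$ this gives $F\ge 0$ throughout. The main obstacle is precisely this shape analysis: the stability constant $C_p$ is tuned so that the second-order data at $u_0$ barely meets the required bound via Lemma \ref{tech2}, and it is the definite sign of $F'''$ that prevents additional interior zeros of $F$ and allows the local estimates to be patched into the global one.
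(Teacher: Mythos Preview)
Your proof is correct and follows essentially the same route as the paper's. The paper normalizes $|x|+|y|=1$ and substitutes $s=|y|$ (so your $u=1-s$), writes the difference as $H(s)=-F(1-s)$, observes that $H''$ is decreasing (equivalently your $F'''<0$), uses Lemma~\ref{tech2} to force the inflection point $p_0\le 1/p$ (your $u^*\ge u_0$), and then concludes from $H(0)=0$, $H(1/p)=H'(1/p)=0$ and the convex/concave shape. Your explicit analysis of $F'$ having exactly the two zeros $u_0$ and $u^{**}$ is just a rephrasing of the same convex--then--concave picture. One remark: your separate verification of $F(0)\ge0$ is redundant --- once you know $F$ is decreasing on $[0,u_0]$ and $F(u_0)=0$, the inequality $F(0)\ge0$ is automatic, so you need not invoke the ``short calculus exercise'' $(1+r)^{2-r}\ge 1+r^{1-r}$ at all.
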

\begin{proof}
By homogeneity, we may assume that $|x|+|y|=1$. Substituting $s:=|y|\in [0,1]$, we transform the inequality into the following equivalent form
$$ -p\left(1-\frac{1}{p}\right)^{p-1}(ps-1)+(p-1)^ps^p-(1-s)^p+\left(1-p\left(1-\frac{1}{p}\right)^{p-1}\right)(ps-1)^2\leq 0.$$
Denoting the left-hand side by $H(s)$, we derive that
$$ H''(s)=p(p-1)^{p+1}s^{p-2}-p(p-1)(1-s)^{p-2}+2p^2\left(1-p\left(1-\frac{1}{p}\right)^{p-1}\right)$$
is a decreasing function of $s$, with $\lim_{s\to 0+}H''(s)=\infty$ and $\lim_{s\to 1-}H''(s)=-\infty$. Since 
$$ H''(1/p)=2p^2\left[\frac{p^{2-p}(p-1)^{p-1}(p-2)}{2}+1-p\left(1-\frac{1}{p}\right)^{p-1}\right]$$
is nonpositive (by Lemma \ref{tech2}), we see that there is a $p_0\in (0,1/p]$ such that $H$ is convex on $(0,p_0)$ and concave on $(p_0,1)$. Since $H(0)=0$ and $H(1/p)=H'(1/p)=0$, the desired result follows.
\end{proof}

 When passing to the Fourier multipliers in Section \ref{multe}, we will also need the following property of $U_p$.

\begin{lemma}
For any $x,\,y,\,k\in \mathbb{H}$ we have
$$ U_p(x,y)+\langle (U_{p})_{y}(x,y), k\rangle\leq U_p(x,y+k).$$
\end{lemma}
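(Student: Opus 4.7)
The plan is to deduce this first-order inequality from the observation that, for every fixed $x\in\mathbb{H}$, the map $y\mapsto U_p(x,y)$ is convex on $\mathbb{H}$. Once convexity in $y$ is secured, the required estimate is nothing other than the standard subgradient/tangent-plane bound $f(y)+\langle \nabla f(y),k\rangle\leq f(y+k)$, applied to $f(\cdot):=U_p(x,\cdot)$.

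The natural first step is to recognize that $U_p$ depends on $y$ only through $|y|$. With $a:=|x|$, $c:=p(1-1/p)^{p-1}$, and $r:=|y|$, one writes
$$U_p(x,y)=\phi_a(|y|),\qquad \phi_a(r):=c\bigl((p-1)r-a\bigr)(a+r)^{p-1},$$
and a direct differentiation, followed by a short regrouping, gives
$$\phi_a'(r)=cp(p-1)\,r\,(a+r)^{p-2},\qquad \phi_a''(r)=cp(p-1)\,(a+r)^{p-3}\bigl(a+(p-1)r\bigr).$$
Both expressions are nonnegative for $a,r\geq 0$ and $p>1$, so $\phi_a$ is nondecreasing and convex on $[0,\infty)$. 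Since the composition of a nondecreasing convex real function with a convex function (here the norm $y\mapsto |y|$ on $\mathbb{H}$) is convex, the map $y\mapsto U_p(x,y)$ is convex on $\mathbb{H}$.

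At any $y\neq 0$ the gradient $(U_p)_y(x,y)=\phi_a'(|y|)\,y/|y|$ exists in the classical sense and convexity immediately yields the claimed inequality. At $y=0$ the identity $\phi_a'(0)=0$ makes $(U_p)_y(x,0)=0$ a legitimate choice of subgradient, so the inequality collapses to $\phi_a(0)\leq \phi_a(|k|)$, which is precisely the monotonicity of $\phi_a$ already established. I do not foresee a genuine obstacle; the only point requiring a little care is the bookkeeping at $y=0$, and this is handled by the two facts just noted.
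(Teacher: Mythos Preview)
Your argument is correct and follows the same overall strategy as the paper: establish that $y\mapsto U_p(x,y)$ is convex on $\mathbb{H}$ for each fixed $x$, and then read off the tangent-plane inequality. The difference is purely in how the convexity is verified. The paper fixes $x,y,k$, sets $H(t)=U_p(x,y+tk)$, and computes $H''(0)$ directly as a sum of two nonnegative terms involving $|k|^2$ and $\langle y/|y|,k\rangle^2$, using $|p-2|<1$ at the end. You instead observe that $U_p(x,y)=\phi_a(|y|)$ is radial in $y$, compute $\phi_a'(r)=cp(p-1)r(a+r)^{p-2}\geq 0$ and $\phi_a''(r)=cp(p-1)(a+r)^{p-3}(a+(p-1)r)\geq 0$, and invoke the composition rule (nondecreasing convex $\circ$ convex is convex). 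Your route is a bit cleaner: it sidesteps the vector Hessian bookkeeping entirely and makes the $C^1$-smoothness at $y=0$ transparent via $\phi_a'(0)=0$. The paper's computation, on the other hand, is self-contained and does not appeal to the composition lemma. Both are equally valid.
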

\begin{proof}
For fixed $x\in \mathbb{H}$, the function $y\mapsto U_p(x,y)$ is of class $C^1$, so it suffices to show that the function $H=H_{x,y,k}:\R\to \R$ given by $ H(t)=U_p(x,y+tk)$ is convex. To this end, it is enough to check that $H''(t)\geq  0$ for all $t$ such that the derivative exists; furthermore, since $H_{x,y,k}(u+v)=H_{x,y+uk,k}(v)$, it suffices to verify the inequality $H''(t)\geq  0$ for $t=0$. A direct computation reveals that
$$ H_{x,y,k}'(t)=p(p-1)(\langle y,k\rangle +t|k|^2)(|x|+|y+tk|)^{p-2}$$
and, when $y\neq 0$,
\begin{align*}
 H_{x,y,k}''(0)&=p\left(1-\frac{1}{p}\right)^{p-1}\cdot p(p-1)(|x|+|y|)^{p-3}|x||k|^2\\
&\quad +p\left(1-\frac{1}{p}\right)^{p-1}\cdot p(p-1)(|x|+|y|)^{p-3}|y|\left(|k|^2+(p-2)\langle y/|y|,k\rangle^2\right).
\end{align*}
However, both summands on the right are nonnegative.  This is clear for the first term, while for the second we simply note that $|p-2|<1$ and $\langle y/|y|,k\rangle^2\leq|k|^2$.
\end{proof}

\begin{proof}[Proof of \eqref{main<2}]\label{Sec<2}
Fix $t>0$ and a pair $X$, $Y$ as in the statement. By Theorem \ref{Wang}, there is a nondecreasing sequence $(\tau_n)_{n\geq 0}$ of stopping times converging to infinity such that for each $n$, $\E U_p(X_{\tau_n\wedge t},Y_{\tau_n\wedge t})\leq 0$. Consequently, by \eqref{maj<2},
\begin{align*}
 \left(1-p\left(1-\frac{1}{p}\right)^{p-1}\right)\E \frac{((p-1)|Y_{\tau_n\wedge t}|-|X_{\tau_n\wedge t}|)^2}{(|X_{\tau_n\wedge t}|+|Y_{\tau_n\wedge t}|)^{2-p}}+(p-1)^p\E |Y_{\tau_n\wedge t}|^p&\leq \E |X_{\tau_n\wedge t}|^p\\
 &\leq ||X||_p^p.
\end{align*}
Letting $n\to \infty$ and then $t\to \infty$ we obtain, by Fatou's lemma,
\begin{align*}
 \left(1-p\left(1-\frac{1}{p}\right)^{p-1}\right)\E \frac{((p-1)|Y_\infty|-|X_\infty|)^2}{(|X_\infty|+|Y_\infty|)^{2-p}}&\leq ||X||_p^p-(p-1)^p||Y||_p^p\\
 &\leq \left(1-\left(1-(p-1)\e\right)^p\right)||X||_p^p\\
 &\leq p(p-1)\e||X||_p^p.
\end{align*}
Combining this with H\"older inequality and Burkholder's estimate \eqref{burkin}, we see that
\begin{align*}
||(p-1)|Y_\infty|-|X_\infty|||_p&\leq \left(\E \frac{((p-1)|Y_\infty|-|X_\infty|)^2}{(|X_\infty|+|Y_\infty|)^{2-p}}\right)^{1/2}|||X_\infty|+|Y_\infty|||_p^{1-p/2}\\
&\leq \left(\frac{p(p-1)\e}{1-p\left(1-\frac{1}{p}\right)^{p-1}}\right)^{1/2} ||X||_p^{p/2}\cdot \left(\frac{p}{p-1}||X||_p\right)^{1-p/2}. 
\end{align*}
This is precisely \eqref{main<2}.
\end{proof}

\begin{proof}[Sharpness]
We will now show that the exponent $1/2$ in the factor $\e^{1/2}$ cannot be decreased and also prove that the constant $c_p$ is of optimal order as $p\to 2$. To this end, fix $p\in (1,2)$, a small $\e>0$ and take the example from \S\ref{example}, with small  $\eta$ and a large $K$, to be chosen later. 
As we have observed above, $F$ is a $\pm 1$-transform of $G$.  Furthermore, if $N$ is large enough, then
\begin{align*}
 &||F_\infty||_p^p-\left(\frac{1}{p-1}-\e\right)^p||G_\infty||_p^p\\
 &\geq \frac{\log K}{2p^{p-1}}\frac{(1+\eta)^p+(1-\eta)^p}{2}-\left(\frac{1}{p-1}-\e\right)^p\cdot \frac{1}{2}\\
&\quad -(1-\e(p-1))^p\frac{\log K}{2p^{p-1}}\frac{(1+\eta/(p-1))^p+(1-\eta/(p-1))^p}{2}.
\end{align*}
Now, for any $\eta$ we have
$$ \frac{1}{2}\left[\left(1+\frac{\eta}{p-1}\right)^p+\left(1-\frac{\eta}{p-1}\right)^p\right]\leq 1+\frac{p}{2(p-1)}\eta^2$$
and, if $\eta$ is sufficiently small,
\begin{equation}\label{ineq}
 \frac{(1+\eta)^p+(1-\eta)^p}{2}\geq 1+\frac{p(p-1)}{2}\eta^2-\alpha_p\eta^2,
\end{equation}
where $\alpha_p=p(2-p)/(2(p-1))$. 
For such $\eta$ we can write
\begin{align*}
 &||F_\infty||_p^p-\left(\frac{1}{p-1}-\e\right)^p||G_\infty||_p^p\\
 &\geq -\frac{1}{2}\left(\frac{1}{p-1}-\e\right)^p\\
&\quad +\frac{\log K}{2p^{p-1}}\left[\big(1-(1-(p-1)\e)^p\big)\left(1+\frac{p}{2(p-1)}\eta^2\right)-\frac{p(p+1)(2-p)\eta^2}{2(p-1)}\right].
\end{align*}
Now, for sufficiently small $\e$ we have $1-(1-(p-1)\e)^p\geq (p-1)\e$; taking $\eta=(p-1)\sqrt{\e/(2-p)}/2$ (and decreasing $\e$ if necessary, so that \eqref{ineq} holds) we see that the expression in the square brackets above is not smaller than
$$ (p-1)\e-\frac{p(p-1)(p+1)\e}{8}\geq \frac{(p-1)\e}{4}>0.$$
Therefore, for sufficiently large $K$ we have $||F_\infty||_p^p>\left(\frac{1}{p-1}-\e\right)^p||G_\infty||_p^p$. On the other hand, \eqref{asymptD} implies that for sufficiently large $N$,
$$ \big|\big||G_\infty|-(p-1)|F_\infty|\big|\big|_p^p\geq \left(\frac{p\eta}{2(p-1)}\right)^p \cdot \frac{(p-1)^p\log K}{2p^{p-1}}2^{p}> \left(\frac{p\eta}{2(p-1)}\right)^p ||G_\infty||_p^p, $$
provided $K$ is sufficiently large, so that $||G_\infty||_p^p< \frac{(p-1)^p\log K}{2p^{p-1}}\cdot 2^{p}$. In other words, we have
$$ \big|\big||F_\infty|-(p-1)^{-1}|G_\infty|\big|\big|_p> \frac{p}{4(p-1)}\sqrt{\frac{\e}{2-p}}||G_\infty||_p.$$
This implies the aforementioned optimality of the constants.
\end{proof}

\subsection{Proof of Theorem \ref{mainthm} for $2<p<\infty$}\label{p>2no} Here the reasoning will be slightly longer. We start with the following string of elementary inequalities.   

\begin{lemma}\label{tech}
Let $p\geq 2$. Then
\begin{equation}\label{te1}
 \left(1-\frac{1}{p}\right)^{p-1}\geq \frac{2}{p+2},
\end{equation}
\begin{equation}\label{te2}
 p\left(1-\frac{1}{p}\right)^{p-1}\geq 1+\frac{p-2}{p-1}\left(\frac{1}{2}-\frac{1}{e}\right),
\end{equation}
\begin{equation}\label{te3}
\left(1-\frac{1}{p}\right)^{p-1}\leq 1/2,
\end{equation}
\begin{equation}\label{te4}
\left(1-\frac{1}{p-1}\right)^{p-1}\leq \frac{1}{e}
\end{equation}
and
\begin{equation}\label{te5}
1-\left(1-\frac{1}{p}\right)^{p-2}\geq \frac{p-2}{2(p-1)}.
\end{equation}
\end{lemma}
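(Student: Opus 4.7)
The plan is to prove three of the five inequalities by direct computation and deduce the remaining two as corollaries. First I observe that \eqref{te5} is an immediate consequence of \eqref{te3}: the identity $(1-1/p)^{p-2}=\frac{p}{p-1}(1-1/p)^{p-1}$ combined with \eqref{te3} gives $(1-1/p)^{p-2}\leq \frac{p}{2(p-1)}$, and subtracting from $1$ yields exactly the right-hand side of \eqref{te5}. Similarly, \eqref{te2} follows from \eqref{te1}: the bound \eqref{te1} gives $p(1-1/p)^{p-1}\geq \frac{2p}{p+2}=1+\frac{p-2}{p+2}$, so it suffices to verify the elementary comparison $\frac{p-1}{p+2}\geq \frac{1}{2}-\frac{1}{e}$ for $p\geq 2$, which holds because the left side equals $\frac{1}{4}$ at $p=2$ and is increasing.

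The three base inequalities \eqref{te1}, \eqref{te3} and \eqref{te4} I would prove using the Taylor expansion $\log(1-1/p)=-\sum_{k\geq 1}\frac{1}{kp^k}$. For \eqref{te3}, set $h(p)=(p-1)\log(1-1/p)$ and note that
$$h'(p)=\log(1-1/p)+\frac{1}{p}=-\sum_{k\geq 2}\frac{1}{kp^k}<0,$$
so $h$ is decreasing on $[2,\infty)$ with $h(2)=-\log 2$; exponentiating gives \eqref{te3}. For \eqref{te4}, the same expansion (with $p-1$ in place of $p$) gives, for $p>2$,
$$(p-1)\log\!\left(1-\frac{1}{p-1}\right)=-1-\sum_{k\geq 2}\frac{1}{k(p-1)^{k-1}}\leq -1,$$
which yields \eqref{te4} after exponentiating; the case $p=2$ is trivial, since the left side is then $0$. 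For \eqref{te1}, set $\phi(p)=(p-1)\log(1-1/p)-\log\frac{2}{p+2}$, so that $\phi(2)=0$, and compute
$$\phi'(p)=\frac{1}{p+2}-\sum_{k\geq 2}\frac{1}{kp^k}\geq \frac{1}{p+2}-\frac{1}{2p(p-1)},$$
where the second step uses $\sum_{k\geq 2}\frac{1}{kp^k}\leq \frac{1}{2}\sum_{k\geq 2}\frac{1}{p^k}=\frac{1}{2p(p-1)}$. The inequality $\frac{1}{p+2}\geq \frac{1}{2p(p-1)}$ rearranges to $(p-2)(2p+1)\geq 0$, which holds for $p\geq 2$; hence $\phi$ is non-decreasing on $[2,\infty)$, so $\phi\geq 0$ and \eqref{te1} follows.

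The only mildly delicate point in this plan is the geometric-series estimate used in \eqref{te1}, where the crude replacement of $1/k$ by $1/2$ must still be sharp enough to close at the endpoint. The factorization $(p-2)(2p+1)$ shows that the bound is in fact exactly tight at $p=2$, which is consistent with the observation that every one of the five inequalities becomes an equality (or is trivial) at $p=2$.
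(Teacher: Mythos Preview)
Your proof is correct; all five parts check out, including the factorization $(p-2)(2p+1)$ that makes the endpoint $p=2$ work for \eqref{te1}.

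The overall architecture matches the paper's: both deduce \eqref{te2} from \eqref{te1} via $\frac{2p}{p+2}=1+\frac{p-2}{p+2}$ and the elementary comparison $\frac{p-1}{p+2}\geq \frac{1}{2}-\frac{1}{e}$, and both obtain \eqref{te5} from \eqref{te3}. Your deduction of \eqref{te5} is more direct than the paper's, which first rewrites \eqref{te3} as $1-(1-1/p)^{p-2}\geq (1-1/p)^{p-2}-\frac{1}{p-1}$ and then runs a short contradiction argument; your one-line route through $(1-1/p)^{p-2}\leq \frac{p}{2(p-1)}$ is cleaner. The genuine methodological difference is in \eqref{te1}: the paper sets $H(p)=\log(p+2)-\log 2-(p-1)\log\frac{p}{p-1}$, computes $H''$, locates a single inflection point, and closes with the boundary values $H(2)=0$, $H'(2)\geq 0$, $H(\infty)=\infty$. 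Your argument instead bounds $\phi'(p)$ from below using the Taylor series and the geometric estimate $\sum_{k\geq 2}\frac{1}{kp^k}\leq \frac{1}{2p(p-1)}$, reducing everything to the quadratic inequality $2p(p-1)\geq p+2$. This is shorter and avoids the second-derivative bookkeeping; the paper's approach, on the other hand, does not rely on any series truncation and would adapt more readily if the target function $\frac{2}{p+2}$ were replaced by something for which the crude $1/k\leq 1/2$ bound is no longer sharp enough.
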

\begin{proof}
The first inequality is equivalent to 
$$ H(p):=\log(p+2)-\log 2-(p-1)\log\frac{p}{p-1}\geq 0.$$
We have $H'(p)=(p+2)^{-1}+p^{-1}-\log(p/(p-1))$ and
$$ H''(p)=-\frac{1}{(p+2)^2}-\frac{1}{p^2}+\frac{1}{p(p-1)}=\frac{(p+2)^2-p^2(p-1)}{p^2(p-1)(p+2)}.$$
But $I(p)=(p+2)^2-p^2(p-1)$, the numerator of $H''(p)$, is a decreasing function of $p\in [2,\infty)$ and  $I'(p)=-3p^2+4p+4\leq 0$.  Furthermore, we have $I(2)=12$ and $\lim_{p\to\infty}I(p)=-\infty$. Consequently, there is $p_0\in (2,\infty)$ such that $H$ is convex on $[2,p_0]$ and concave on $[p_0,\infty)$. It suffices to note that $H(2)=0$,  $\lim_{p\downarrow 2}H'(p)=\frac{3}{4}-\log 2\geq 0$ and $\lim_{p\to\infty} H(p)=\infty$, and the first estimate follows. The inequality \eqref{te2} is an immediate consequence of \eqref{te1}, since
$$ p\left(1-\frac{1}{p}\right)^{p-1}\geq \frac{2p}{p+2}\geq 1+\frac{p-2}{p-1}\left(\frac{1}{2}-\frac{1}{e}\right),$$
where the latter estimate is equivalent to $1/2+1/e\geq 3/(p+2)$, which is obviously satisfied. 

The inequalities \eqref{te3} and \eqref{te4} follow from a straightforward differentiation, together with the  elementary bound for the logarithmic function: $\frac{x}{x+1}\leq \log(1+x)\leq x$ for $x>-1$. 

Finally, to show \eqref{te5}, note that \eqref{te3} can be rewritten as
$$ 1-\left(1-\frac{1}{p}\right)^{p-2}\geq \left(1-\frac{1}{p}\right)^{p-2}-\frac{1}{p-1}.$$
So, if \eqref{te5} were not true, this would imply that 
$$ \left(1-\frac{1}{p}\right)^{p-2}-\frac{1}{p-1}<\frac{p-2}{2(p-1)},$$
 or equivalently that 
$$ \left(1-\frac{1}{p}\right)^{p-2}<\frac{p-2}{2(p-1)}+\frac{1}{p-1},$$
and this, in turn, would give that 
$$ 1-\left(1-\frac{1}{p}\right)^{p-2}>1-\frac{1}{p-1}-\frac{p-2}{2(p-1)}=\frac{p-2}{2(p-1)},$$
i.e., \eqref{te5}: 
a contradiction.
\end{proof}

As in the case $p<2$, the proof of \eqref{main>2} is based on properties of a certain special function $U_p:\mathbb{H}\times \mathbb{H}\to \R$. Let
 
\begin{equation}\label{U-p}
U_p(x,y)=\begin{cases}
\displaystyle p\left(1-\frac{1}{p}\right)^{p-1}(|y|-(p-1)|x|)(|x|+|y|)^{p-1} &\mbox{if }|y|\geq (p-2)|x|,\\
\displaystyle -\frac{(p-1)^{2p-2}}{p^{p-2}}|x|^p & \mbox{if }|y|<(p-2)|x|.
\end{cases}
\end{equation}
Before we proceed, let us stress here that this function \emph{is not} the function used  by Burkholder (or Wang) in the proof of \eqref{burkin}. To the best of our knowledge, in the literature one can find two proofs of this $L^p$-estimate. One exploits the function by the formula
$$ U_p^{(1)}(x,y)=p\left(1-\frac{1}{p}\right)^{p-1}(|y|-(p-1)|x|)(|x|+|y|)^{p-1},$$
while the other proof uses
$$ U_p^{(2)}(x,y)=\begin{cases}
\displaystyle p\left(1-\frac{1}{p}\right)^{p-1}(|y|-(p-1)|x|)(|x|+|y|)^{p-1} &\mbox{if }|y|\geq (p-1)|x|,\\
\displaystyle \frac{p-1}{p}\left(|y|^p-(p-1)^p|x|^p\right) & \mbox{if }|y|<(p-1)|x|.
\end{cases}$$

Both of these functions are not sufficient for our purposes. As we will see in Section \ref{multe} below, we will require that the function $U_p$ has the following property: for any $x$,  $U_p(x,\cdot)$ is convex. This condition is not satisfied by $U_p^{(1)}$. On the other hand, $U_p^{(2)}$ does not enjoy the appropriate majorization condition (see \eqref{maj>2} below). This forces us to, in a sense, ``mediate'' between $U_p^{(1)}$ and $U_p^{(2)}$, which has led us to the function $U_p$ above.

Let us study the properties of this object.

\begin{lemma}
The function $U_p$ satisfies the assumptions of Theorem \ref{Wang}.
\end{lemma}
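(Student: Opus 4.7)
The function $U_p$ is built by joining, across the surface $|y|=(p-2)|x|$, Burkholder's classical function $U_p^{(1)}(x,y) = p(1-1/p)^{p-1}(|y|-(p-1)|x|)(|x|+|y|)^{p-1}$ (on $|y|\geq(p-2)|x|$) with the $y$-independent expression $-B|x|^p$, where $B=(p-1)^{2p-2}/p^{p-2}$ (on $|y|<(p-2)|x|$). My plan is to take the open sets $S_1=\{(x,y):|y|>(p-2)|x|,\ |x||y|\neq 0\}$ and $S_2=\{(x,y):|y|<(p-2)|x|,\ |x||y|\neq 0\}$, which are smoothness domains of $U_p$ and whose closures cover $\mathbb{H}\times \mathbb{H}$, and then verify in turn (a) continuity of $U_p$ together with the $C^1$-matching of the two pieces across $|y|=(p-2)|x|$, and (b) the concavity inequality \eqref{conv} on each $S_i$.

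For (a), setting $|y|=(p-2)|x|$ gives $|x|+|y|=(p-1)|x|$ and $|y|-(p-1)|x|=-|x|$, whence $U_p^{(1)}=-p(1-1/p)^{p-1}(p-1)^{p-1}|x|^p=-B|x|^p$, proving continuity across the surface; continuity at the coordinate axes is immediate. For the $C^1$-match, a short calculation gives
$$\partial_{y_j}U_p^{(1)}=Ap(|x|+|y|)^{p-2}(|y|-(p-2)|x|)\cdot (y_j/|y|)$$
with $A=p(1-1/p)^{p-1}$, which vanishes on $|y|=(p-2)|x|$ and matches the vanishing $y$-gradient of the second piece; in the $x$-direction both pieces yield $-Bp\,x_j|x|^{p-2}$ on the boundary, via the algebraic identity $A(p-1)^{p-1}=B$.

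For (b) on $S_2$, since $U_p$ depends only on $|x|$ there, $U_{xy}=U_{yy}=0$ and $\langle U_{xx}h,h\rangle=-Bp|x|^{p-2}|h|^2-Bp(p-2)|x|^{p-4}\langle x,h\rangle^2$. Choosing $c_2(x,y)=Bp|x|^{p-2}$, the left-hand side of \eqref{conv} plus $c_2(|h|^2-|k|^2)$ collapses to $-Bp(p-2)|x|^{p-4}\langle x,h\rangle^2-Bp|x|^{p-2}|k|^2\leq 0$ (using $p\geq 2$), while $c_2$ is nonnegative and locally bounded. For (b) on $S_1$, the function coincides with $U_p^{(1)}$, for which the concavity inequality \eqref{conv} with a nonnegative, locally bounded $c_1$ was verified by Wang (Lemma 3 of \cite{W}) on the entire domain $\mathbb{H}\times\mathbb{H}\setminus\{|x||y|=0\}$; restricting that verification to the subregion $S_1$ is all we need.

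The main conceptual point is that the replacement, on $|y|<(p-2)|x|$, of the piece used in the alternative proof of \eqref{burkin} by the simple $y$-independent expression $-B|x|^p$ does not break \eqref{conv} — indeed it makes the verification trivial, since $-B|x|^p$ is manifestly concave in $x$ — and this modification will become essential for later steps, namely the majorization \eqref{maj>2} and the convexity of $y\mapsto U_p(x,y)$ required for the Fourier-multiplier application in Section \ref{multe}.
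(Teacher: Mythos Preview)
Your proof is correct and follows essentially the same route as the paper's own argument: the same decomposition into $S_1=\{|y|>(p-2)|x|\}$ and $S_2=\{|y|<(p-2)|x|\}$, the same choice $c_2(x,y)=Bp|x|^{p-2}$ on $S_2$, and the same appeal to the known concavity of Burkholder's function $U_p^{(1)}$ on $S_1$ (the paper cites \cite{B2}, you cite \cite{W}, both are valid). The only difference is that you spell out the $C^1$-matching across $|y|=(p-2)|x|$ explicitly via the formula $\partial_{y_j}U_p^{(1)}=Ap(|x|+|y|)^{p-2}(|y|-(p-2)|x|)\,y_j/|y|$ and the identity $A(p-1)^{p-1}=B$, whereas the paper simply declares the regularity ``straightforward''; your explicit verification is a welcome addition and your exclusion of $\{|x||y|=0\}$ from the $S_i$ is the technically cleaner choice.
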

\begin{proof}
It is straightforward to check the local boundedness and regularity (there are two sets $S_i$: $S_1=\{(x,y)\in \mathbb{H}\times \mathbb{H}:|y|>(p-2)|x|\}$ and $S_2=\{(x,y)\in \mathbb{H}\times \mathbb{H}:|y|<(p-2)|x|\}$). The only nontrivial assumption is the inequality \eqref{conv}. However, on $S_1$ this estimate is contained in \cite{B2}, while on $S_2$ it is trivial since  the left-hand side equals
$$ -\frac{(p-1)^{2p-2}}{p^{p-2}}\cdot \big(p(p-2)|x|^{p-4}\langle x,h\rangle^2+p|x|^{p-2}|h|^2\big),$$
so the estimate holds with $c_2(x,y)=-p^{3-p}(p-1)^{2p-2}|x|^{p-2}.$ 
\end{proof}

The function $U_p$ enjoys the following majorization property. 

\begin{lemma}
For any $x,\,y\in \mathbb{H}$ we have
\begin{equation}\label{maj>2}
U_p(x,y)\geq |y|^p-(p-1)^p|x|^p+\alpha_p\big||y|-(p-1)|x|\big|^p,
\end{equation}
where
\begin{equation}\label{alpha-p} 
\alpha_p=\frac{p-2}{p-1}\left(\frac{1}{2}-\frac{1}{e}\right).
\end{equation}
\end{lemma}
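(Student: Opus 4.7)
Both sides of \eqref{maj>2} are $p$-homogeneous in $(|x|,|y|)$. The case $|x|=0$ lies in Case 1 of \eqref{U-p} and collapses to $p(1-1/p)^{p-1}\geq 1+\alpha_p$, which is precisely \eqref{te2} of Lemma \ref{tech}. For $|x|>0$ I normalize $|x|=1$ and write $s=|y|\geq 0$, handling the regions $0\leq s<p-2$ and $s\geq p-2$ separately, according to the two pieces of \eqref{U-p}.

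On the region $0\leq s<p-2$, the inequality \eqref{maj>2} rearranges to
\[
(p-1)^p\bigl[1-(1-1/p)^{p-2}\bigr]\;\geq\;s^p+\alpha_p(p-1-s)^p.
\]
The right-hand side is convex on $[0,p-2]$, so it suffices to check the endpoints. At $s=0$ the bound reduces to $\alpha_p\leq 1-(1-1/p)^{p-2}$, which follows from $\alpha_p\leq(p-2)/(2(p-1))$ (immediate from \eqref{alpha-p}) combined with \eqref{te5}. At $s=p-2$, writing $(p-2)/(p-1)=1-1/(p-1)$ and applying \eqref{te4} bounds $(p-2)^p\leq(p-1)^{p-1}(p-2)/e$, after which the desired estimate reduces to $(p-1)^p\geq 1$.

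On the region $s\geq p-2$, set
\[
\Phi(s)=p(1-1/p)^{p-1}(s-(p-1))(1+s)^{p-1}-s^p+(p-1)^p-\alpha_p|s-(p-1)|^p;
\]
I need $\Phi\geq 0$. Direct computation gives $\Phi(p-1)=\Phi'(p-1)=0$ and $\Phi''(p-1)=(p-1)^{p-1}(p-2)>0$, so $s=p-1$ is a strict local minimum of value zero, while $\lim_{s\to\infty}\Phi(s)/s^p=p(1-1/p)^{p-1}-1-\alpha_p\geq 0$ by \eqref{te2}. Continuity of \eqref{maj>2} across $s=p-2$ combined with the first region yields $\Phi(p-2)\geq 0$. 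To propagate these three anchor points into a global sign statement I split $[p-2,\infty)$ into $[p-2,p-1]$ and $[p-1,\infty)$ and analyze
\[
\Phi'(s)=p^2(1-1/p)^{p-1}(1+s)^{p-2}(s-(p-2))-ps^{p-1}-\alpha_p p|s-(p-1)|^{p-1}\,\mathrm{sgn}(s-(p-1))
\]
on each subinterval, tracking the various factors through the bounds collected in Lemma \ref{tech}.

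The main obstacle is the intermediate portion of the second region: neither the Taylor expansion at $s=p-1$ nor the asymptotic behaviour at infinity alone controls $\Phi$ there. The value of $\alpha_p$ in \eqref{alpha-p} is calibrated precisely so that the elementary bounds of Lemma \ref{tech} are just strong enough to keep $\Phi$ nonnegative across this range; the technical work consists of tracking these bounds carefully through the derivative calculations rather than applying them end-to-end.
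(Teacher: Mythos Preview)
Your treatment of the region $0\leq s<p-2$ is fine: the convexity-plus-endpoints reduction is clean, and the endpoint $s=p-2$ does go through once you also invoke \eqref{te5} (not only \eqref{te4}) to bound $1-(1-1/p)^{p-2}$ from below; you should make that use explicit.

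The genuine gap is the region $s\geq p-2$. What you have established there is only a collection of necessary conditions: $\Phi(p-2)\geq 0$, $\Phi(p-1)=\Phi'(p-1)=0$ with $\Phi''(p-1)>0$, and $\Phi(s)/s^p\to c\geq 0$. These do \emph{not} force $\Phi\geq 0$ on all of $[p-2,\infty)$; a smooth function can easily satisfy all four and still dip below zero between $p-2$ and $p-1$, or past $p-1$ before recovering. Your closing paragraphs acknowledge this and promise that ``tracking these bounds carefully through the derivative calculations'' will work, but no such calculation is carried out. That is the missing idea, not a routine detail: one needs structural information about $\Phi'$ or $\Phi''$ (a sign pattern, a monotonicity, a single crossing) to turn the anchor data into a global sign.

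The paper supplies exactly this missing structure, and the normalization $|x|+|y|=1$ (with $s=|x|$) is what makes it visible. In those variables the second derivative factors as $F''(s)=p(p-1)G(s)$ with $G$ a sum of three monotone terms; one shows $G$ has a single sign change on $(0,1/p)$ and is nonpositive on $[1/p,1/(p-1)]$. Hence $F$ is convex then concave on $[0,1/(p-1)]$, and since $F(1/p)=F'(1/p)=0$ and $F(0)\leq 0$ (this is \eqref{te2}), the concave piece lies below its maximum $0$ and the convex piece lies below its endpoint values, giving $F\leq 0$ throughout. In your normalization $|x|=1$ this convex--concave structure is obscured because the interval is unbounded and the second derivative does not separate as cleanly; if you want to keep your setup you will need an equivalent single-crossing argument for $\Phi''$, which you have not provided.
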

\begin{proof}
By homogeneity, we may and do assume that $|x|+|y|=1$. Then, substituting $s:=|x|$, we see that the \eqref{maj>2} becomes
\begin{equation}\label{case1}
 -p\left(1-\frac{1}{p}\right)^{p-1}(1-ps)+(1-s)^p-(p-1)^ps^p+\alpha_p|1-ps|^p\leq 0,
\end{equation}
when $s\in [0,1/(p-1)]$, and
\begin{equation}\label{case2}
 -(p-1)^p\left[1-\left(1-\frac{1}{p}\right)^{p-2}\right]s^p+(1-s)^p+\alpha_p (ps-1)^p\leq 0
\end{equation}
if $s\in [1/(p-1),1]$. Denote the left-hand sides of \eqref{case1} and \eqref{case2} by $F(s)$. We easily check that if $s\in (0,1/(p-1))$, then
$$ F''(s)=p(p-1)\bigg[(1-s)^{p-2}-(p-1)^ps^{p-2}+p^2\alpha_p|1-ps|^{p-2}\bigg]=:p(p-1)G(s).$$
Let us analyze the sign of $G$. First, note that $G$ is nonpositive on $[1/p,1/(p-1)]$.  This  is due to the inequality
\begin{align*}
 &(1-s)^{p-2}-(p-1)^ps^{p-2}\\
&=\big[(1-s)^{p-2}-(p-1)^{p-2}s^{p-2}\big]+p(2-p)(p-1)^{p-2}s^{p-2}\\
 &<p(2-p)[(p-1)s]^{p-2}\\
&\leq p(2-p)(ps-1)^{p-2}\leq -p^2\alpha_p(ps-1)^{p-2}.
\end{align*}
To see what happens on the interval $[0,1/p]$, observe that $G$ is decreasing there since it is  the sum of three terms with this property. Furthermore, we have $G(0)=1+p^2(p-2)/(p+2)>0$ and $G(1/p)=p(2-p)(1-1/p)^{p-2}<0$, so there exists $s_0\in (0,1/p)$ such that $G\geq 0$ on $[0,s_0]$ and $G\leq 0$ on $[s_0,1/p]$. Therefore, we have shown the existence of an $s_0\in (0,1/p)$ such that $F$ is convex on $[0,s_0]$ and concave on $[s_0,1/(p-1)]$. Combining this with the equalities $F(1/p)=F'(1/p)=0$ and the estimate $F(0)\leq 0$, which is equivalent to the second inequality of of Lemma \eqref{tech}, we see that the claim will follow once we have shown \eqref{case2} for $s\in [1/(p-1),1]$. To do this, note that for such $s$ we have $1-s\leq (p-2)s$ and $ps-1\leq (p-1)s$.  Hence it suffices to prove that
\begin{equation}\label{innt}
 1-\left(1-\frac{1}{p}\right)^{p-2}-\left(\frac{p-2}{p-1}\right)^p\geq \alpha_p.
\end{equation}
However, by \eqref{te4} and \eqref{te5} we have
$$ -\left(\frac{p-2}{p-1}\right)^p\geq -\frac{p-2}{p-1}\cdot \frac{1}{e}$$
and
$$ 1-\left(1-\frac{1}{p}\right)^{p-2}\geq \frac{p-2}{p-1}\cdot \frac{1}{2}.$$
Summing these two estimates gives  \eqref{innt}. This completes the proof.
\end{proof}

Finally, as in the case $1<p<2$, we will need the convexity of $U_p$ with respect to the variable $y$.

\begin{lemma}\label{conv>2}
For any $x,\,y,\,k\in\mathbb{H}$ we have
$$ U_p(x,y)+\langle (U_{p})_{y}(x,y),k\rangle \leq U_p(x,y+k).$$
\end{lemma}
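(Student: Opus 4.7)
The plan is to establish the convexity of $y\mapsto U_p(x,y)$ for each fixed $x\in\mathbb{H}$; combined with a check that this map is of class $C^1$, the asserted subgradient inequality follows at once from the tangent hyperplane characterization of convex functions.

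First I would observe that $U_p(x,\cdot)$ is radial in its second argument: we may write $U_p(x,y)=\tilde\phi_x(|y|)$, where $\tilde\phi_x:[0,\infty)\to\R$ is defined piecewise by
\begin{equation*}
\tilde\phi_x(r)=\begin{cases}-\dfrac{(p-1)^{2p-2}}{p^{p-2}}|x|^p,& r\le (p-2)|x|,\\[4pt] p\left(1-\dfrac{1}{p}\right)^{p-1}\bigl(r-(p-1)|x|\bigr)\bigl(|x|+r\bigr)^{p-1},& r\ge (p-2)|x|.\end{cases}
\end{equation*}
A direct substitution shows that the two branches agree at $r=(p-2)|x|$, so $\tilde\phi_x$ is continuous.

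The core step is to prove that $\tilde\phi_x$ is convex and nondecreasing on $[0,\infty)$. Writing $\phi(r):=\bigl(r-(p-1)|x|\bigr)\bigl(|x|+r\bigr)^{p-1}$, a routine differentiation gives
\begin{equation*}
\phi'(r)=p\bigl(|x|+r\bigr)^{p-2}\bigl(r-(p-2)|x|\bigr),\qquad \phi''(r)=p(p-1)\bigl(|x|+r\bigr)^{p-3}\bigl(r-(p-3)|x|\bigr),
\end{equation*}
and both quantities are nonnegative on $r\ge (p-2)|x|$ (the second one because $p-3\le p-2$). Thus $\tilde\phi_x$ is convex and nondecreasing on each of the two branches, and since $\phi'\bigl((p-2)|x|\bigr)=0$, its derivative is continuous at the transition point. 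Consequently $\tilde\phi_x$ is $C^1$, convex, and nondecreasing on the entire half-line $[0,\infty)$.

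To finish, I would invoke the standard fact that the composition of a convex nondecreasing function on $[0,\infty)$ with the convex map $y\mapsto |y|$ is convex on $\mathbb{H}$; this yields convexity of $y\mapsto U_p(x,y)$. The chain rule, together with $\tilde\phi_x'(0)=0$ (automatic when $x\ne 0$, since a neighborhood of $r=0$ lies in the constant branch, and a direct consequence of $p>2$ when $x=0$, in which case $U_p(0,y)=p(1-1/p)^{p-1}|y|^p$), ensures that $y\mapsto U_p(x,y)$ is $C^1$ at $y=0$ as well. The subgradient inequality for convex $C^1$ functions then gives the claim. The only step requiring care is verifying the sign of $\phi''$ together with the vanishing of $\phi'$ at the interface $r=(p-2)|x|$; once this is in place, the composition argument packages everything cleanly without any case analysis on the relative position of $y$ and $y+k$.
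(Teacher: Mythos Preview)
Your proof is correct, and it takes a somewhat different route from the paper's. The paper fixes $x,y,k$ and shows directly that $H(t)=U_p(x,y+tk)$ has $H''(0)\geq 0$ whenever $|y|\neq (p-2)|x|$, computing the full Hessian in the direction $k$ and splitting it as a sum $I_1+I_2$ of two nonnegative pieces (one coming from the radial second derivative, one from the curvature of $|\cdot|$). You instead exploit the radial structure $U_p(x,y)=\tilde\phi_x(|y|)$ up front, reduce to a one-variable check that $\tilde\phi_x$ is $C^1$, convex and nondecreasing, and then invoke the composition lemma (convex nondecreasing composed with the norm is convex). Your computation of $\phi'$ and $\phi''$ is exactly the radial part of the paper's calculation; the angular contribution that the paper handles via $I_2$ is absorbed in your approach into the convexity of $y\mapsto |y|$. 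The net effect is the same, but your packaging avoids any Hilbert-space Hessian bookkeeping and handles the interface $|y|=(p-2)|x|$ and the point $y=0$ more transparently, at the cost of needing the extra observation that $\tilde\phi_x$ is nondecreasing (not just convex), which you correctly verify via $\phi'\bigl((p-2)|x|\bigr)=0$.
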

\begin{proof}
We argue as in the case $p<2$ and consider the function $H(t)=U_p(x+t,y+tk)$. Then $H$ is of class $C^1$ on $\R$ and it is enough to check that $H''(0)\geq 0$,  provided $|y|\neq (p-2)|x|$. If $|y|<(p-2)|x|$, then $H''(0)=0$ since,  if the reverse inequality holds, then $ H''(0)=p\left(1-\frac{1}{p}\right)^{p-1}\cdot(I_1+I_2)$, where
\begin{align*}
I_1&=p(p-2)(|x|+|y|)^{p-2}|x||y|^{-3}\langle y,k\rangle^2,\\
I_2&=p(|x|+|y|)^{p-3}\big((p-2)|y|^{-2}\langle y,k\rangle^2+(|x|/|y|+1)|k|^2\big)[|y|-(p-2)|x|]
\end{align*}
are both nonnegative. This gives the assertion.
\end{proof}

\begin{proof}[Proof of \eqref{main>2}]
Take a pair $X$, $Y$ as in the statement and fix $t\geq 0$. By Theorem \ref{Wang}, there is a nondecreasing sequence $(\tau_n)_{n\geq 0}$ of stopping times converging to infinity such that for each $n$, $\E U_p(X_{\tau_n\wedge t},Y_{\tau_n\wedge t})\leq 0$. We argue as in the case $1<p<2$.  Combining this inequality with \eqref{maj>2} and letting $n$ and $t$ go to infinity,  we get
\begin{align*}
 \frac{p-2}{p-1}\left(\frac{1}{2}-\frac{1}{e}\right)\big|\big||Y_\infty|-(p-1)|X_\infty|\big|\big|_p^p&\leq (p-1)^p||X||_p^p-||Y||_p^p\\
&\leq \big[(p-1)^p-(p-1-\e)^p\big]||X||_p^p\\
 &\leq p(p-1)^{p-1}\e ||X||_p^p.
\end{align*}
This is precisely the inequality \eqref{main>2}.
\end{proof}

\begin{proof}[Sharpness]
Now we will prove that the exponent $1/p$ in the factor $\e^{1/p}$ is optimal.  We will also obtain, using the same example, that the constant $c_p$ has the right order $O(p)$, as $p\to \infty$. 
To this end, consider the example from \S\ref{example}, with $\eta=0$ and some $K$ to be chosen in a moment. Pick a small positive $\e$. By \eqref{asymptF} and \eqref{asymptG}, if $N$ is sufficiently large, then the condition $||G_\infty||_p\geq (p-1-\e)||F_\infty||_p$ is implied by
$$ \frac{(p-1)^p\log K}{2p^{p-1}}=(p-1-\e)^p\left(\frac{1}{2}+\frac{\log K}{2p^{p-1}}\right).$$
This gives the condition on $K$ should be such that 
$$ \frac{\log K}{p^{p-1}}=\left[\left(\frac{p-1}{p-1-\e}\right)^p-1\right]^{-1}< \frac{1}{\e}.$$
But the latter inequality is equivalent to the elementary bound  $\left(\frac{p-1}{p-1-\e}\right)^p> 1+\e$. Hence, by \eqref{dif0},
$$ \big|\big||G_\infty|-(p-1)|F_\infty|\big|\big|_p^p\geq \frac{1}{2}(p-1)^p>\frac{(p-1)^p\e}{2}\cdot \frac{\log K}{p^{p-1}}>\frac{(p-1)^p\e}{2}\cdot ||F_\infty||_p^p, $$
provided $K$ is large enough.

Finally, let us study the order of $c_p$ as $p\downarrow 2$. First, note that for $p$ sufficiently close to $2$ we have
\begin{equation}\label{auxill}
 \frac{2(p-1)^{p-1}(p-2)}{p^{p-2}}>\frac{(p-1)^p-1}{2}.
\end{equation}
Indeed, if we divide both sides  by $p-2$ and let $p\downarrow 2$, then the left-hand side converges to $2$, while the right hand-side converges to $1$. Now, fix $p>2$ such that \eqref{auxill} holds.  Pick a small positive $\e<8(p-2)$ and consider the example of \S\ref{example} with $\eta=0$ and $K=\exp(8(p-2)/\e)$. If $N$ is sufficiently large, then, by \eqref{asymptF} and \eqref{asymptG},
\begin{align*}
 &||G_\infty||_p^p-(p-1-\e)^p||F_\infty||_p^p\\
&\geq \frac{(p-1)^p\log K}{2p^{p-1}}+\frac{1}{2}-(p-1-\e/2)^p\left(\frac{\log K}{2p^{p-1}}+\frac{1}{2}\right)\\
 &=\frac{((p-1)^p-(p-1-\e/2)^p)\cdot 4(p-2)}{p^{p-1}\e}-\frac{(p-1-\e/2)^p-1}{2}.
\end{align*}
If $\e$ is sufficiently small, then the above expression is nonnegative; in the limit $\e\to 0$ this is guaranteed by \eqref{auxill}. It remains to note that for such small $\e$ (and for sufficiently large $N$) we have
$$ \frac{\big|\big||G_\infty|-(p-1)|F_\infty|\big|\big|_p^p}{||F_\infty||_p^p}\geq \frac{1/2}{1+p^{1-p}\log K}=\frac{\e}{p-2}\cdot \left(\frac{2\e}{p-2}+16p^{1-p}\right)^{-1}.$$
This proves that $c_p$ is of the optimal order $O((p-2)^{-1/p}),$ as $p\downarrow 2$.
\end{proof}

\subsection{Proof of Theorem \ref{mainthmo} for $1<p<2$}\label{p<2o} We start with some technical facts.

\begin{lemma}
Let
\begin{equation}\label{au0}
 \kappa_p=-\frac{p-1}{8}\tan^{p-2}\frac{\pi}{2p}\cos\frac{\pi}{p}.
 \end{equation} 
Then 
\begin{equation}\label{au1}
\kappa_p\leq -\frac{p(p-1)}{2}\tan^{p-2}\frac{\pi}{2p}\cos\frac{\pi}{p}, 
\end{equation}
\begin{equation}\label{au3}
\kappa_p\leq -\frac{p(p-1)}{2}\cos\frac{\pi}{p}
\end{equation}
and
\begin{equation}\label{au2}
\kappa_p\leq \tan^{p-2}\frac{\pi}{2p}-\sin^{p-3}\frac{\pi}{2p}\cos\frac{\pi}{2p}.
\end{equation}
\end{lemma}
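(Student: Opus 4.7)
The plan is to address the three inequalities \eqref{au1}, \eqref{au3}, \eqref{au2} separately. Throughout I use that $1<p<2$ implies $\cos(\pi/p)<0$ (since $\pi/p\in(\pi/2,\pi)$) and $\tan(\pi/(2p))>1$ (since $\pi/(2p)\in(\pi/4,\pi/2)$).

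For \eqref{au1}, moving $\kappa_p$ to the right-hand side and factoring rewrites the claim as
\[
(p-1)\tan^{p-2}\tfrac{\pi}{2p}\bigl(-\cos\tfrac{\pi}{p}\bigr)\cdot\frac{4p-1}{8}\ge 0,
\]
which is immediate since every factor is nonnegative for $p>1/4$. For \eqref{au3}, the same sign analysis reduces the problem to $\tan^{p-2}(\pi/(2p))\le 4p$, and this holds because a quantity greater than $1$ raised to a negative power is smaller than $1$, while $4p>4$.

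The main work is for \eqref{au2}. Setting $\theta=\pi/(2p)$, I factor the right-hand side as
\[
\tan^{p-2}\theta-\sin^{p-3}\theta\cos\theta=\frac{\sin^{p-3}\theta\,(\sin\theta-\cos^{p-1}\theta)}{\cos^{p-2}\theta},
\]
and multiply through by the positive quantity $\cos^{p-2}\theta/\sin^{p-2}\theta$ to reduce \eqref{au2} to the scalar inequality
\[
\frac{\cos^{p-1}\theta}{\sin\theta}\le 1-\frac{(p-1)(2\sin^2\theta-1)}{8}.
\]
Next I apply Bernoulli's inequality $(1-y)^{p-1}\le 1-(p-1)y$ (valid since $0<p-1<1$) with $y=1-\cos\theta$, bounding the left-hand side by $\bigl(1-(p-1)(1-\cos\theta)\bigr)/\sin\theta$. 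After rearrangement, this reduces \eqref{au2} to the purely trigonometric one-variable inequality
\[
1-\sin\theta\le (p-1)\Bigl[(1-\cos\theta)-\tfrac{1}{8}\sin\theta(2\sin^2\theta-1)\Bigr],\qquad \theta=\frac{\pi}{2p}.
\]

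The remaining inequality is the main obstacle. Both sides vanish at the endpoints $p=1$ (i.e.\ $\theta=\pi/2$) and $p=2$ (i.e.\ $\theta=\pi/4$), and the bracket on the right is positive on $[\pi/4,\pi/2]$ (taking the values $1-1/\sqrt{2}$ and $7/8$ at the endpoints). To close the argument, the plan is to substitute $\phi=\pi/2-\theta\in[0,\pi/4]$, so that $p-1=2\phi/(\pi-2\phi)$, $1-\sin\theta=2\sin^2(\phi/2)$, and $\sin\theta(2\sin^2\theta-1)=\cos\phi\cos(2\phi)$, and to verify by direct derivative analysis that the difference of the two sides---a smooth function of $\phi$ on $[0,\pi/4]$ vanishing at both endpoints---is nonnegative. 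Numerics suggest the difference has a single interior critical point (a maximum), so I expect the derivative to change sign exactly once, which together with the endpoint equalities yields the claim by elementary monotonicity. The expected hard part is precisely this derivative analysis, in which the logarithmic contribution from $\partial_p\cos^{p-1}\theta$ must be balanced carefully against the rational expression coming from $p-1=(\pi-2\theta)/(2\theta)$.
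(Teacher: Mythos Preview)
Your treatment of \eqref{au1} and \eqref{au3} is correct and essentially identical to the paper's: both amount to the observation that $\tfrac{1}{8}\le\tfrac{p}{2}$ and $\tan^{p-2}\tfrac{\pi}{2p}\le 1$.

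For \eqref{au2} your reduction is sound. Multiplying by $\cos^{p-2}\theta/\sin^{p-2}\theta$ gives exactly the inequality the paper reaches, namely $\cos^{p-1}\theta/\sin\theta\le 1-\tfrac{p-1}{8}(2\sin^2\theta-1)$ with $\theta=\pi/(2p)$; your further Bernoulli step is legitimate and does reduce the problem to
\[
1-\sin\theta\le (p-1)\Bigl[(1-\cos\theta)-\tfrac{1}{8}\sin\theta(2\sin^2\theta-1)\Bigr].
\]
However, the proof then stops. You describe a plan---substitute $\phi=\pi/2-\theta$, observe both sides vanish at the endpoints, and argue the difference has a single interior maximum via a derivative sign analysis---but no such analysis is carried out. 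Numerical checks show the Bernoulli-reduced inequality becomes \emph{extremely} tight as $p\uparrow 2$ (the two sides differ by roughly $0.002$ at $p=1.99$), so the promised ``derivative changes sign exactly once'' argument is not a formality; it is the entire content of the lemma in this range, and you have not supplied it. Your closing remark about ``the logarithmic contribution from $\partial_p\cos^{p-1}\theta$'' is also puzzling, since after Bernoulli that term is gone---this suggests the endgame is not yet clear in your own mind.

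By contrast, the paper avoids a global derivative analysis by splitting at $p=3/2$. For $1<p\le 3/2$ it crudely bounds $-\cos\tfrac{\pi}{p}\le 1$, $\sin\tfrac{\pi}{2p}\le 1$ and proves the resulting inequality $\tfrac{p-1}{8}\le\sin\tfrac{\pi}{2p}-\cos^{p-1}\tfrac{\pi}{2p}$ by two explicit linear-in-$(p-1)$ bounds with concrete constants $2-\sqrt{3}$ and $2-\sqrt{2}$. For $3/2<p\le 2$ it uses $\sin\tfrac{\pi}{2p}\le\sqrt{3}/2$, $p-1\le 1$, and then proves two separate bounds \eqref{ge3}, \eqref{ge4} on $\sin\tfrac{\pi}{2p}-\cos\tfrac{\pi}{2p}$ and $\cos\tfrac{\pi}{2p}-\cos^{p-1}\tfrac{\pi}{2p}$ in terms of $-\cos\tfrac{\pi}{p}$, each by elementary convexity/concavity arguments. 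This case split is less elegant than a single unified estimate, but it is complete; your approach would be cleaner if the final derivative argument were actually executed.
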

\begin{proof}
The first inequality is trivial, since $\frac{p}{2}\leq \frac{1}{8}$. The second inequality follows at once from the first one since $\tan^{p-2}\frac{\pi}{2p}\leq 1$. The main difficulty lies in proving \eqref{au2}. Substituting the expression for $kappa_p$ and simplifying, the inequality is equivalent to
$$ -\frac{(p-1)}{8}\cos\frac{\pi}{p}\sin\frac{\pi}{2p}\leq \sin\frac{\pi}{2p}-\cos^{p-1}\frac{\pi}{2p}.$$
We consider two cases.

\emph{The case $1<p\leq 3/2$}. We have $-\cos\frac{\pi}{p}\leq 1$ and $\sin\frac{\pi}{2p}\leq 1$, so we will be done if we show that
\begin{equation}\label{ssam1}
 \frac{p-1}{8}\leq \sin\frac{\pi}{2p}-\cos^{p-1}\frac{\pi}{2p}.
\end{equation}
However, as we shall see, we have
\begin{equation}\label{ge1}
 \sin\frac{\pi}{2p}-1+(2-\sqrt{3})(p-1)\geq 0
\end{equation}
and
\begin{equation}\label{ge2}
1-\cos^{p-1}\frac{\pi}{2p}-(2-\sqrt{2})(p-1)\geq 0.
\end{equation}
Adding these two estimates gives \eqref{ssam1}, since $\sqrt{3}-\sqrt{2}>1/8$. To show \eqref{ge1}, denote the left-hand side by $H(p)$. Differentiating  twice gives   that
$$ H''(p)=\frac{\pi}{4p^4}\left(-\pi\sin\frac{\pi}{2p}+4p\cos\frac{\pi}{2p}\right).$$
The expression in the parentheses increases as $p$ increases.  Furthermore, its values at $p=1$ and $p=3/2$ equal $-\pi$ and $-\pi\sqrt{3}/2+3>0$. Therefore, there is $p_0\in (1,3/2)$ such that $H$ is concave on $(1,p_0)$ and convex on $(p_0,3/2)$. Since $H(1)=0$, \eqref{ge1} will be proved if we can   show that $H'(3/2)\leq 0$. But $H'(p)=-\frac{\pi}{2p^2}\cos\frac{\pi}{2p}+2-\sqrt{3}$ and so
$$ H'(3/2)=-\frac{\pi}{9}+2-\sqrt{3}<0,$$
since $\pi/9>1/3$ and $\sqrt{3}>5/3$. To show \eqref{ge2}, note that $\cos\frac{\pi}{2p}\leq \frac{1}{2}$, so
$$ 1-\cos^{p-1}\frac{\pi}{2p}\geq 1-2^{1-p}.$$
The desired bound now follows at once from the concavity of the function $p\mapsto 1-2^{1-p}$.

\emph{The case $3/2< p\leq 2$}. We have $\sin\frac{\pi}{2p}\leq \frac{\sqrt{3}}{2}$ and $p-1\leq 1$, so it is enough to show that
\begin{equation}\label{ssam2}
 -\frac{\sqrt{3}}{16}\cos\frac{\pi}{p}\leq \sin\frac{\pi}{2p}-\cos^{p-1}\frac{\pi}{2p}.
 \end{equation}
As previously, we split the right-hand side into two parts. We will prove that
\begin{equation}\label{ge3}
\sin\frac{\pi}{2p}-\cos\frac{\pi}{2p}\geq -\frac{\cos\frac{\pi}{p}}{\sqrt{2}}
\end{equation}
and
\begin{equation}\label{ge4}
\cos\frac{\pi}{2p}-\cos^{p-1}\frac{\pi}{2p}\geq -\frac{4}{\pi}(1-\sqrt{2})\cos\frac{\pi}{p}.
\end{equation}
Summing these two bounds gives \eqref{ssam2}, since $\frac{1}{\sqrt{2}}-\frac{4}{\pi}(\sqrt{2}-1)\geq \frac{\sqrt{3}}{16}$. To prove \eqref{ge3}, first note that the elementary identity  
$$ \sin\frac{\pi}{2p}-\cos\frac{\pi}{2p}=\frac{-\cos\frac{\pi}{p}}{\sin\frac{\pi}{2p}+\cos\frac{\pi}{2p}}.$$
Combine this with the simple observation that the denominator is not larger than $\sqrt{2}$ proves  \eqref{ge3}. To establish \eqref{ge4}, notice that the function $u\mapsto u-u^{p-1}$ is increasing on $[1/2,1/\sqrt{2}]$: its derivative  $1-(p-1)u^{p-2}\geq 1-(p-1)2^{2-p}\geq 0$ (the inequality  is equivalent to $2^{p-2}\geq p-1$ and follows from the convexity of the function $p\mapsto 2^{p-2}$). Therefore, using $\cos\frac{\pi}{2p}\geq 1/2$, we obtain
\begin{equation}\label{ge5}
 \cos\frac{\pi}{2p}-\cos^{p-1}\frac{\pi}{2p}\geq \frac{1}{2}(1-2^{2-p})\geq (\sqrt{2}-1)(p-2),
\end{equation}
where in the last inequality we exploited the fact that the function $p\mapsto \frac{1}{2}(1-2^{2-p})$ is concave. Now, observe that the function $\xi(p)=-\cos\frac{\pi}{p}$ is convex on $[3/2,2]$. Indeed, we have
$$ \xi''(p)=\left(-\sin\frac{\pi}{p}\cdot \frac{\pi}{p^2}\right)'=\frac{\pi^2}{p^4}\left(\cos\frac{\pi}{p}+\frac{2\sin\frac{\pi}{p}}{\frac{\pi}{p}}\right)\geq \frac{\pi^2}{p^4}\left(-\frac{1}{2}+\frac{2\cdot \frac{\sqrt{3}}{2}}{\frac{2\pi}{3}}\right)>0,$$
where we used the fact that the function $x\mapsto \sin x/x$ is decreasing on $[\pi/2,2\pi/3]$. Consequently, we have 
\begin{equation}\label{boundxi}
\-\cos\frac{\pi}{p}=\xi(p)=\xi(p)-\xi(2)\geq \xi'(2)(p-2)=\frac{\pi}{4}(2-p),
\end{equation}
which combined with \eqref{ge5} gives
$$  \cos\frac{\pi}{2p}-\cos^{p-1}\frac{\pi}{2p}\geq \frac{4}{\pi}(\sqrt{2}-1)\cos\frac{\pi}{p},$$
which is precisely \eqref{ge4} and completes the proof. 
\end{proof}
Let $V_p:\R\times \R\to \R$ be given by
$$ V_p(x,y)=-\beta_pR^p\cos(p\theta),$$
where 
\begin{equation}\label{beta-p}
\beta_p=\sin^{p-1}\frac{\pi}{2p}/\cos\frac{\pi}{2p}
\end{equation}
 and we have used the polar coordinates $|x|=R\cos\theta$ and  $y=R\sin\theta$, $\theta\in [-\pi/2,\pi/2]$. Pichorides \cite{P} showed that $V_p$ is superharmonic on $\R\times \R$. For out purpose we need its convexity and majorization properties. 

\begin{lemma}
For any $x\in \R$, the function $V_p(x,\cdot )$ is convex.
\end{lemma}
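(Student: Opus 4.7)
The plan is to recognize $V_p$ as (up to a multiplicative constant) the real part of a complex power, and then compute the second $y$-derivative using holomorphic calculus. Concretely, if we set $z=|x|+iy$, then $z=Re^{i\theta}$ with $R$ and $\theta$ precisely as in the definition of $V_p$, and
$$ \Re(z^p)=R^p\cos(p\theta),$$
so that $V_p(x,y)=-\beta_p\Re\big((|x|+iy)^p\big)$. Fixing $x$ and differentiating in $y$, we can exploit the fact that the map $z\mapsto z^p$ is holomorphic on the right half-plane $\{\Re z>0\}$: vertical differentiation along $z=|x|+iy$ brings in a factor of $i$ each time, and so
$$ \frac{\partial^2}{\partial y^2}(|x|+iy)^p=i^2\,p(p-1)(|x|+iy)^{p-2}=-p(p-1)(|x|+iy)^{p-2}.$$
Taking real parts yields the clean formula
$$ \frac{\partial^2}{\partial y^2}V_p(x,y)=\beta_p\,p(p-1)\,R^{p-2}\cos\big((p-2)\theta\big).$$

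With this identity in hand, convexity reduces to a sign check, which is the only step that uses the assumption $1<p<2$ of the current subsection. All the prefactors are positive: $\beta_p>0$ by its defining formula \eqref{beta-p}, $p(p-1)>0$, and $R^{p-2}>0$ on the set where $R>0$. For the cosine term, note that $|x|\geq 0$ forces $\theta\in[-\pi/2,\pi/2]$, while $|p-2|<1$, so $|(p-2)\theta|<\pi/2$ and hence $\cos((p-2)\theta)>0$. Thus $\partial_{yy}V_p(x,y)\geq 0$ wherever it is defined, giving convexity of $V_p(x,\cdot)$ on the open half-lines $y>0$ and $y<0$. The boundary points $y=0$ (when $x\neq 0$) and the degenerate case $x=0$ are then handled by the continuity of $V_p$ and a standard mollification/limiting argument, since $V_p$ is continuous on $\R^2$ and $C^2$ off the coordinate axes.

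There is no real obstacle here once the complex-analytic reformulation is in place; the only point requiring care is keeping track of which angular wedge $\theta$ lives in, so that the cosine in the second derivative has the right sign. In particular, it is crucial that we are in the regime $1<p<2$, since in that range $|p-2|<1$ forces $(p-2)\theta$ into $(-\pi/2,\pi/2)$; for $p\geq 2$ this argument would break and the analogous statement would fail without additional modifications.
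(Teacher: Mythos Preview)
Your proof is correct and follows essentially the same approach as the paper: both compute $(V_p)_{yy}(x,y)=\beta_p\,p(p-1)\,R^{p-2}\cos((p-2)\theta)$ and observe it is nonnegative, with your complex-analytic rewriting $V_p(x,y)=-\beta_p\Re\big((|x|+iy)^p\big)$ simply making the derivation of that formula more transparent. One small remark on your closing commentary: the sign argument actually works for all $1<p<3$ (since $|p-2|<1$ there), not just $1<p<2$, so the claim that it ``would break for $p\geq 2$'' is slightly overstated, though this does not affect the proof for the case at hand.
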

\begin{proof}
It suffices to compute that $ (V_{p})_{yy}(x,y)=p(p-1)\beta_pR^{p-2}\cos((2-p)\theta)$ and note that this expression is nonnegative.
\end{proof}
The key property is the following majorization.

\begin{lemma}
For any $x,\,y\in \R$ we have
\begin{equation}\label{majo<2}
V_p(x,y)\geq |y|^p-\tan^p\frac{\pi}{2p}|x|^p+\kappa_p(|y|-\tan\frac{\pi}{2p}|x|)^2(|x|+|y|)^{p-2}.
\end{equation}
\end{lemma}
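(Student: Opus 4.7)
By positive $p$-homogeneity of both sides and their joint invariance under $x\mapsto -x$ and $y\mapsto -y$, I may assume $x,y\geq 0$ and $x^2+y^2=1$; set $x=\cos\theta$, $y=\sin\theta$ for $\theta\in[0,\pi/2]$ and write $a=\cos\tfrac{\pi}{2p}$, $b=\sin\tfrac{\pi}{2p}$. Introducing
\[
 \psi(\theta):=-\beta_p\cos(p\theta)-\sin^p\theta+\tfrac{b^p}{a^p}\cos^p\theta,\qquad g(\theta):=\sin\theta-\tfrac{b}{a}\cos\theta,\qquad h(\theta):=\cos\theta+\sin\theta,
\]
the claim reduces to $H(\theta):=\psi(\theta)-\kappa_p g(\theta)^2 h(\theta)^{p-2}\geq 0$ on $[0,\pi/2]$. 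Note that $\psi\geq 0$ is Pichorides' classical majorization.

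A direct computation exploiting $\cos(p\theta_0)=0$, $g(\theta_0)=0$, and the Pichorides tangency $b^p=(b/a)^p a^p$ at $\theta_0=\pi/(2p)$ gives $H(\theta_0)=H'(\theta_0)=0$. Differentiating once more and using $g'(\theta_0)=1/a$, $h(\theta_0)=a+b$, $a^2+b^2=1$, I obtain
\[
 a^2 H''(\theta_0)=p(p-1)b^{p-2}(b^2-a^2)-2\kappa_p(a+b)^{p-2}.
\]
Plugging in the definition \eqref{au0} of $\kappa_p$ and simplifying using $b^{p-2}=a^{p-2}(b/a)^{p-2}$ and $b^2-a^2=-\cos(\pi/p)>0$, the nonnegativity of $H''(\theta_0)$ reduces to the trivial bound $(1+b/a)^{2-p}\geq 1/(4p)$. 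This secures $H\geq 0$ in a neighborhood of $\theta_0$ and identifies $\theta_0$ as the unique tangency point, with a comfortable factor-of-four slack.

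To extend the estimate to the whole interval $[0,\pi/2]$, I first verify the endpoints: a direct substitution shows that $H(0)\geq 0$ is exactly equivalent to \eqref{au2}, while $H(\pi/2)\geq 0$ reduces (after using $\beta_p=b^{p-1}/a$ and a short trigonometric manipulation) to \eqref{au3}. For the intermediate values I would recast the target as $\kappa_p\leq\Phi(\theta):=\psi(\theta)/[g(\theta)^2 h(\theta)^{p-2}]$, where $\Phi$ is extended continuously at $\theta_0$ by the finite value $\Phi(\theta_0)=p(p-1)b^{p-2}(b^2-a^2)/[2(a+b)^{p-2}]$. The local analysis above yields $\Phi(\theta_0)\geq 4\kappa_p$, and the endpoint computations yield $\Phi(0),\Phi(\pi/2)\geq\kappa_p$. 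The main obstacle is to rule out an interior minimum of $\Phi$ falling below $\kappa_p$; I would handle this by a sign analysis of $\Phi'$ on each of the subintervals $(0,\theta_0)$ and $(\theta_0,\pi/2)$, exploiting the concavity/monotonicity properties of the Pichorides function $\psi$ together with the sharper auxiliary estimate \eqref{au1} to certify that the correction $\kappa_p g^2 h^{p-2}$ grows strictly slower than $\psi$ as $\theta$ moves away from $\theta_0$ in either direction.
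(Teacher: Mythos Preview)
Your proposal sets things up correctly and verifies the tangency at $\theta_0=\pi/(2p)$ together with the endpoint $\theta=0$, but it does not actually prove the lemma: the final paragraph is a plan, not an argument. Saying you ``would handle this by a sign analysis of $\Phi'$'' leaves precisely the hard part undone. Knowing $H(\theta_0)=H'(\theta_0)=0$, $H''(\theta_0)\geq 0$, and $H(0),H(\pi/2)\geq 0$ does not exclude $H$ dipping below zero somewhere in the interior; ruling that out requires real control over $H$ (or equivalently $\Phi$) across the whole interval, and your invocation of ``concavity/monotonicity properties of the Pichorides function $\psi$'' together with \eqref{au1} is too vague to constitute a proof. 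In particular, $\Phi=\psi/(g^2 h^{p-2})$ is a ratio of transcendental functions, and a sign analysis of $\Phi'$ is not at all routine. Also, your reduction of $H(\pi/2)\geq 0$ to \eqref{au3} is asserted but not shown; the actual condition at $\theta=\pi/2$ is $\kappa_p\leq -\beta_p\cos(p\pi/2)-1$, and connecting this to \eqref{au3} is not immediate.

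The paper circumvents these difficulties by a simple but decisive trick you are missing: since $p-2<0$, one has $(|x|+|y|)^{p-2}\leq \min\{|x|^{p-2},|y|^{p-2}\}$, so on $[0,\theta_0]$ the inequality is strengthened by replacing $(|x|+|y|)^{p-2}$ with $|x|^{p-2}$, and on $[\theta_0,\pi/2]$ with $|y|^{p-2}$. After dividing through by $\cos^p\theta$ (resp.\ $\sin^p\theta$), the remainder term becomes proportional to $\sin^2(\theta-\theta_0)$ times a simple factor, and a three-fold differentiation reduces the sign analysis to a monotone function whose endpoint values are controlled by \eqref{au1}--\eqref{au3}. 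That monotonicity is what makes the global conclusion go through; your formulation with the full factor $h(\theta)^{p-2}=(\cos\theta+\sin\theta)^{p-2}$ does not have this structure, and without it the sign analysis you propose remains unexecuted.
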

\begin{proof}
By symmetry we may assume that $y\geq 0$ (in polar coordinates,  $\theta\in [0,\pi/2]$). We will consider two cases.

\emph{Case 1: $\theta\in [0,\pi/(2p)]$} We will show the stronger bound
$$ V_p(x,y)\geq |y|^p-\tan^p\frac{\pi}{2p}|x|^p+\kappa_p(|y|-\tan\frac{\pi}{2p}|x|)^2|x|^{p-2}.$$
In polar coordinates, this is equivalent to
\begin{equation}\label{sam1}
 \beta_p\frac{\cos(p\theta)}{\cos^p\theta}+\tan^p\theta-\tan^p\frac{\pi}{2p}+\frac{\kappa_p\sin^2(\theta-\frac{\pi}{2p})}{\cos^2\frac{\pi}{2p}\cos^2\theta}\leq 0, 
\end{equation}
where $\kappa_p$  is the constant in  \eqref{au0}.  
Denoting the left-hand side by $H_1(\theta)$, we derive that
$$ \cos^{2}(\theta) H_1'(\theta)=\frac{p\beta_p\sin((p-1)\theta)}{\cos^{p-1}\theta}+p\tan^{p-1}\theta+\frac{2\kappa_p\sin\left(\theta-\frac{\pi}{2p}\right)}{\cos\frac{\pi}{2p}\cos\theta}.$$
Again, denote the right-hand side by $H_2(\theta)$ and differentiate to get
$$ \cos^2(\theta) H_2'(\theta)=-\frac{p(p-1)\beta_p\cos((p-2)\theta)}{\cos^{p-2}\theta}+p(p-1)\tan^{p-2}\theta+2\kappa_p.$$
We repeat this process once again. Denoting the right-hand side by $H_3(\theta)$ and computing we find that 
$$ \cos^{p-1}(\theta) H_3'(\theta)=p(p-1)(p-2)\beta_p\sin((p-3)\theta)+p(p-1)(p-2)\sin^{p-3}\theta.$$
Now, a direct differentiation shows that the right-hand side is nondecreasing; it tends to $-\infty$ when $\theta\downarrow 0$, and its value at $\pi/(2p)$ may be nonpositive or positive, depending on $p$. Consequently, $H_3$ either decreases on $[0,\pi/(2p)]$, or it decreases on some subinterval $[0,\theta_0]$, $\theta_0<\pi/(2p)$, and then increases on $[\theta_0,\pi/(2p)]$. However, we have $H_3(0+)=\infty$ and
$$ H_3\left(\frac{\pi}{2p}\right)=p(p-1)\tan^{p-2}\frac{\pi}{2p}\cos\frac{\pi}{p}+2\kappa_p\leq 0,$$
where the inequality follows from \eqref{au1}. 
Hence, the sign of $H_3$ behaves as follows: there is $\theta_1\in [0,\pi/(2p)]$ such that $H_3>0$ on $(0,\theta_1)$ and $H_3<0$ on $(\theta_1,\pi/(2p))$. So, $H_2$ increases on $(0,\theta_1)$ and decreases on $(\theta_1,\pi/(2p))$. But $H_2(0)<0$ and $H_2(\pi/(2p))=0$.  Therefore, there is $\theta_2\in (0,\pi/(2p))$ such that $H_2<0$ on $[0,\theta_2)$ and $H_2>0$ on $(\theta_2,\pi/(2p))$. So, $H_1$ decreases on $(0,\theta_2)$ and increases on $(\theta_2,\pi/(2p))$. Since 
$$ H_1(0)=\beta_p-\tan^p\frac{\pi}{2p}+\kappa_p\tan^2\frac{\pi}{2p}\leq 0,$$
(by \eqref{au2}) and $H_1(\pi/(2p))=0$, we conclude that $H_1$ is nonpositive on $[0,\pi/(2p)]$, which is precisely \eqref{sam1}.

\emph{Case 2: $\theta\in [\pi/(2p),\pi/2]$}. The reasoning is similar to that above. The majorization follows from the stronger estimate
$$ V_p(x,y)\geq |y|^p-\tan^p\frac{\pi}{2p}|x|^p+\kappa_p(|y|-\tan\frac{\pi}{2p}|x|)^2|y|^{p-2},$$
which, in polar coordinates, can be rewritten in the form
$$ \beta_p\frac{\cos(p\theta)}{\sin^p\theta}+1-\tan^p\frac{\pi}{2p}\cot^p\theta+\frac{\kappa_p\sin^2(\theta-\frac{\pi}{2p})}{\cos^2\frac{\pi}{2p}\sin^2\theta}\leq 0.$$
Denote the left-hand side by $H_1(\theta)$ and compute that
$$ \sin^2(\theta) H_1(\theta)=-\frac{p\beta_p\cos((p-1)\theta)}{\sin^{p-1}\theta}+p\tan^p\frac{\pi}{2p}\cot^{p-1}\theta+\frac{2\kappa_p\sin(\theta-\frac{\pi}{2p})\sin\frac{\pi}{2p}}{\cos^2\frac{\pi}{2p}\sin \theta}.$$
Denote the right-hand side by $H_2(\theta)$ and differentiate this function to obtain
$$ \sin^2(\theta) H_2'(\theta)=\frac{p(p-1)\beta_p\cos((p-2)\theta)}{\sin^{p-2}\theta}-p(p-1)\tan^p\frac{\pi}{2p}\cot^{p-2}\theta+2\kappa_p\tan^2\frac{\pi}{2p}.$$
Finally, denote the right-hand side by $H_3(\theta)$ and derive that
$$ \sin^{p-1}(\theta) H_3'(\theta)=-p(p-1)(p-2)\beta_p\cos((p-3)\theta)+p(p-1)(p-2)\tan^p\frac{\pi}{2p}\cos^{p-3}\theta.$$
Obviously, the right-hand side is decreasing.  Furthermore, it is not difficult to check that
its value at $\pi/(2p)$ equals
$$ p(p-1)(p-2)\frac{\sin^p\frac{\pi}{2p}}{\cos\frac{\pi}{2p}}\left[-3+4\sin^2\frac{\pi}{2p}+\cos^{-2}\frac{\pi}{2p}\right],$$
which is nonpositive, since $p(p-1)(p-2)<0$, $4\sin^2\frac{\pi}{2p}\geq 2$ and $\cos^{-2}\frac{\pi}{2p}\geq 1$. Therefore, $H_3$ is decreasing on the interval $[\pi/(2p),\pi/2]$. But
$$ H_3\left(\frac{\pi}{2p}\right)=2p(p-1)\sin^2\frac{\pi}{2p}-p(p-1)\tan^2\frac{\pi}{2p}+2\kappa_p\tan^2\frac{\pi}{2p}\leq 0,$$
where once again we used  \eqref{au3} above. So,  
$H_3$ is actually nonpositive on $(\pi/(2p),\pi/2)$ and hence $H_2$ is decreasing there. But $H_2$ vanishes at $\pi/(2p)$, so $H_2$ is negative on $[\pi/(2p),\pi/2]$, which implies that $H_1$ is decreasing on this interval. Since $H_1$ also vanishes at $\pi/(2p)$, this shows that $H_1\leq 0$ and completes the proof of the lemma.
\end{proof}

\begin{proof}[Proof of \eqref{maino<2}]\label{Sech<2}
Fix $t>0$ and a pair $X$, $Y$ as in the statement. By Theorem \ref{Wang2}, there is a nondecreasing sequence $(\tau_n)_{n\geq 0}$ of stopping times converging to infinity such that for each $n$, $\E V_p(X_{\tau_n\wedge t},Y_{\tau_n\wedge t})\leq 0$. Consequently, by \eqref{majo<2},
\begin{align*}
 \kappa_p\E \frac{(|Y_{\tau_n\wedge t}|-\tan\frac{\pi}{2p}|X_{\tau_n\wedge t}|)^2}{(|X_{\tau_n\wedge t}|+|Y_{\tau_n\wedge t}|)^{2-p}}+\E |Y_{\tau_n\wedge t}|^p&\leq \tan^p\frac{\pi}{2p}\E |X_{\tau_n\wedge t}|^p\leq \tan^p\frac{\pi}{2p}||X||_p^p.
\end{align*}
Letting $n\to \infty$ and then $t\to \infty$ we obtain, by Fatou's lemma,
\begin{align*}
 \kappa_p\E \frac{(|Y_\infty|-\tan\frac{\pi}{2p}|X_\infty|)^2}{(|X_\infty|+|Y_\infty|)^{2-p}}&\leq \tan^p\frac{\pi}{2p}||X||_p^p-||Y||_p^p\\
 &\leq \left(\tan^p\frac{\pi}{2p}-\left(\tan\frac{\pi}{2p}-\e\right)^p\right)||X||_p^p\\
 &\leq p\tan^{p-1}\frac{\pi}{2p}\e||X||_p^p.
\end{align*}
Applying H\"older's inequality and the estimate \eqref{BWin},  
we obtain
\begin{align*}
\left|\left||Y_\infty|-\tan\frac{\pi}{2p}|X_\infty|\right|\right|_p&\leq \left(\E \frac{(|Y_\infty|-\tan\frac{\pi}{2p}|X_\infty|)^2}{(|X_\infty|+|Y_\infty|)^{2-p}}\right)^{1/2}|||X_\infty|+|Y_\infty|||_{p}^{1-p/2}\\
&\leq \left(\frac{p\tan^{p-1}\frac{\pi}{2p}\e}{\kappa_p}\right)^{1/2}  \left(1+\tan\frac{\pi}{2p}\right)^{1-p/2}||X||_p\\
&= \left(-\frac{8p\tan\frac{\pi}{2p}\e}{(p-1)\cos\frac{\pi}{p}}\right)^{1/2}  \left(1+\tan\frac{\pi}{2p}\right)^{1-p/2}||X||_p.
\end{align*}
Now it suffices to apply $-\cos\frac{\pi}{p}\geq \frac{\pi}{4}(2-p)$ (see \eqref{boundxi} above) and the inequality $\tan\frac{\pi}{2p}\leq (p-1)^{-1}$ (which follows from the comparison of the sharp constants in \eqref{burkin} and \eqref{BWin}) to get the claim.
\end{proof}

\begin{proof}[Sharpness]
Now we will show that the exponent $\e^{1/2}$ and the order $O((p-2)^{-1/2})$ as $p\uparrow 2$ are optimal. We start with the observation that for a given $p\in (1,2)$, if $\eta>0$ is sufficiently small, then
\begin{equation}\label{gen10}
 \frac{\sin^p(\frac{\pi}{2p}+\eta)+ \sin^p(\frac{\pi}{2p}-\eta)}{\cos^p(\frac{\pi}{2p}+\eta)+ \cos^p(\frac{\pi}{2p}-\eta)}\geq \tan^p\frac{\pi}{2p}-d_p(p-2)\eta^2,
 \end{equation}
 where $d_p=2p^3/(p-1)^p$. 
To show this, note that
\begin{align*}
 &\sin^p\left(\frac{\pi}{2p}+\eta\right)+ \sin^p\left(\frac{\pi}{2p}-\eta\right)\\
&\qquad =
2\sin^p\frac{\pi}{2p}+\left(p(p-1)\sin^{p-2}\frac{\pi}{2p}\cos^2\frac{\pi}{2p}-p\sin^p\frac{\pi}{2p}\right)\eta^2+o(\eta^2)
\end{align*}
and
\begin{align*}
 &\cos^p\left(\frac{\pi}{2p}+\eta\right)+ \cos^p\left(\frac{\pi}{2p}-\eta\right)\\
&\qquad =
2\cos^p\frac{\pi}{2p}+\left(p(p-1)\cos^{p-2}\frac{\pi}{2p}\sin^2\frac{\pi}{2p}-p\cos^p\frac{\pi}{2p}\right)\eta^2+o(\eta^2),
\end{align*}
which, after some straightforward manipulations, implies
\begin{align*}
 &\frac{\sin^p(\frac{\pi}{2p}+\eta)+ \sin^p(\frac{\pi}{2p}-\eta)}{\cos^p(\frac{\pi}{2p}+\eta)+ \cos^p(\frac{\pi}{2p}-\eta)}- \tan^p\frac{\pi}{2p}\\
 &\qquad \qquad = \left[p(p-1)\left(\tan^{p-2}\frac{\pi}{2p}-\tan^2\frac{\pi}{2p}\right)+p\left(1-\tan^p\frac{\pi}{2p}\right)\right]\eta^2+o(\eta^2)\\
 &\qquad \qquad \geq p^2\left(1-\tan^p\frac{\pi}{2p}\right)\eta^2+o(\eta^2)\\
&\qquad \qquad \geq p^2\left(1-\frac{1}{(p-1)^p}\right)\eta^2+o(\eta^2)\\
&\qquad \qquad =p^2\frac{(p-1)^p-1}{(p-1)^p}\eta^2+o(\eta^2)\geq \frac{p^3(p-2)}{(p-1)^p}\eta^2+o(\eta^2). 
\end{align*}
Hence \eqref{gen10} follows. Now, pick a positive number $\e$ and $\xi<\pi/(2p)$. By \eqref{gen10}, if $\xi$ is sufficiently close to $\pi/(2p)$ and $\e$ is small enough, then 
$$ \frac{\sin^p(\xi+\eta)+ \sin^p(\xi-\eta)}{\cos^p(\xi+\eta)+ \cos^p(\xi-\eta)}\geq \tan^p\left(\frac{\pi}{2p}-\e\right)
$$
provided $p^3(2-p)(p-1)^{-p}\eta^2\leq \e$ (here we have used the inequality $\tan^p(\pi/(2p))\geq \tan^p(\pi/(2p)-\e)+\e$, valid for sufficiently small $\e$). 

Now, consider the angle
$$D=\{(x,y):x>-1,\,-\tan(\xi+\eta)(x+1)\leq y\leq \tan(\xi-\eta)(x+1)\}$$
and let $(X,Y)$ be a two-dimensional Brownian motion starting from the origin, killed upon hitting the boundary of $D$. Since the aperture of $D$ is smaller then $\pi/p$, both $X$ and $Y$ are $L^p$ bounded; furthermore, if $\xi$ is sufficiently close to $\pi/(2p)$, then the $L^p$-norm of $X$ can be made arbitrarily large. For any $a,\,b\in \R$, the function
$$ W_p(x,y)=aR^p\cos p\theta+bR^p\sin p\theta$$
is harmonic in $D$; if $a$, $b$ are chosen such that 
$$ W_p(x,y)=y^p-\tan^p\left(\frac{\pi}{2p}-\e\right)|x|^p$$
for $(x,y)\in \partial D$, then a straightforward use of It\^o's formula implies
$$ ||Y||_p^p-\tan^p\left(\frac{\pi}{2p}-\e\right)||X+1||_p=W_p(1,0)=a.$$
Now, we easily find $a$ and $b$; actually, we only need to study the first of them, equal to
\begin{align*}
 a&=\sin^{-1}2p\xi\bigg\{\sin p(\xi+\eta)\left[\sin^p(\xi-\eta)-\tan^p(\frac{\pi}{2}-\e)]\cos^p(\xi-\eta)\right]\\
 &\qquad \qquad \qquad \qquad +\sin p(\xi-\eta)\left[\sin^p(\xi+\eta)-\tan^p(\frac{\pi}{2}-\e)]\cos^p(\xi+\eta)\right]\bigg\}.
\end{align*}
Take a look at the expression in the parentheses. If $\xi$ were equal to $\pi/(2p)$, then \eqref{gen10} would guarantee the positivity of the expression, with $\eta=(\e/(2-p))^{1/2}$; by continuity, this is also true if $\xi$ is a little less than $\pi/(2p)$. In other words, if $\xi$ and $\eta$ are chosen in such a way, then $||Y||_p\geq \left(\tan\frac{\pi}{2p}-\e\right)||X+1||_p$; by the aforementioned explosion of $L^p$-norms for $\xi\uparrow \pi/(2p)$, we see that
$$ ||Y||_p\geq \left(\tan\frac{\pi}{2p}-2\e\right)||X||_p$$
provided $\xi$ is sufficiently close to $\pi/2p$. Next, by the definition of $D$, $|Y_\infty|=\tan(\xi\pm \eta)|X_\infty+1|$, so 
\begin{align*}
 \big|\big||Y_\infty|-\tan\frac{\pi}{2p}|X_\infty|\big|\big|_p&\geq \big|\big||Y_\infty|-\tan\frac{\pi}{2p}|X_\infty+1|\big|\big|_p-\tan\frac{\pi}{2p}\\
&\geq \eta||X+1||_p-\tan\frac{\pi}{2p}\geq \frac{\eta}{2}||X||_p,
\end{align*}
provided $\xi$ is sufficiently close to $\pi/(2p)$. The desired sharpness follows.
\end{proof}

\subsection{Proof of Theorem \ref{mainthmo} for $2<p<\infty$}\label{p>2o}

We will need the following fact.

\begin{lemma}
Let
$$ \mu_p=\left(1-\frac{\sqrt{2}}{2}\right)\frac{p-2}{p}.$$
Then
\begin{equation}\label{auxx1}
\mu_p\leq 1-\frac{\sin^{p-1}\frac{\pi}{2p}}{\cos\frac{\pi}{2p}\sin^{p-1}\frac{\pi}{2(p-1)}},
\end{equation}
\begin{equation}\label{auxx2}
\mu_p\leq \frac{\cos^{p-2}\frac{\pi}{2p}\sin^{p-2}\frac{\pi}{2p}\cos\frac{\pi}{p}}{\sin^{p-2}\frac{\pi}{2p(p-1)}},
\end{equation}
and
\begin{equation}\label{auxx3}
\mu_p\leq \frac{\cos^{p-1}\frac{\pi}{2p}}{\sin\frac{\pi}{2p}}-1.
\end{equation}
\end{lemma}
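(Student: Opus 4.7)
The plan is to handle the three inequalities separately by introducing, for each $i \in \{1,2,3\}$, the function $H_i(p)$ equal to the right-hand side of the $i$-th inequality minus $\mu_p$, and showing $H_i(p) \geq 0$ on $[2,\infty)$. The base case $H_i(2) = 0$ follows by direct substitution: at $p = 2$ we have $\mu_p = 0$, $\sin\frac{\pi}{2p} = \cos\frac{\pi}{2p} = \sqrt{2}/2$, $\sin\frac{\pi}{2(p-1)} = 1$, and $\cos\frac{\pi}{p} = 0$, so each right-hand side collapses to zero. Thus it suffices to show that each $H_i$ is nondecreasing on $[2,\infty)$, or alternatively to establish direct lower bounds for each right-hand side dominating $\mu_p$.

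For \eqref{auxx3}, rewrite as $\cos^{p-1}\frac{\pi}{2p} \geq (1+\mu_p)\sin\frac{\pi}{2p}$ and take logarithms to obtain $(p-1)\log\cos\frac{\pi}{2p} - \log\sin\frac{\pi}{2p} \geq \log(1+\mu_p)$; here $-\log\sin\frac{\pi}{2p}$ is strictly increasing and unbounded as $p \to \infty$, while $(p-1)\log\cos\frac{\pi}{2p}$ tends to $0$ and $\log(1+\mu_p)$ stays bounded, and a derivative computation shows that the left-hand side grows faster than the right. For \eqref{auxx2}, use the identity $\cos\frac{\pi}{p} = \cos^2\frac{\pi}{2p} - \sin^2\frac{\pi}{2p}$, which exhibits the linear vanishing of $\cos\frac{\pi}{p}$ at $p = 2$ with slope $\pi/4$; a Taylor expansion of the full right-hand side then gives leading term $\frac{\pi}{4}(p-2)$, comfortably dominating $\mu_p \sim \frac{2-\sqrt{2}}{4}(p-2)$. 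For \eqref{auxx1}, rewrite as $(1-\mu_p)\cos\frac{\pi}{2p}\sin^{p-1}\frac{\pi}{2(p-1)} \geq \sin^{p-1}\frac{\pi}{2p}$; since the monotonicity of $x \mapsto \sin(x)/x$ gives $\sin\frac{\pi}{2p}/\sin\frac{\pi}{2(p-1)} > (p-1)/p$, the reduction is to a somewhat tighter bound obtained by refining the sine ratio via the Taylor expansion $\sin(x)/x = 1 - x^2/6 + O(x^4)$, together with $\cos\frac{\pi}{2p} \geq \sqrt{2}/2$ for $p \geq 2$.

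The main obstacle will be the uniform monotonicity of $H_i$ over the full half-line $[2,\infty)$, particularly in \eqref{auxx1}, where the ratio $\left(\sin\frac{\pi}{2p}/\sin\frac{\pi}{2(p-1)}\right)^{p-1}$ converges to $1/e$ from above, so that crude first-order estimates are insufficient. I would bridge this by combining a second-order Taylor expansion at $p = 2$ (establishing local validity, since the relevant slope comparisons reduce to numerical verifications like $\pi/4 > (2-\sqrt{2})/4$) with asymptotic estimates for large $p$, where each right-hand side tends either to a limit strictly exceeding $\sup \mu_p = 1 - \sqrt{2}/2$ (namely $1 - 1/e$ for \eqref{auxx1}) or to $+\infty$ (for \eqref{auxx2} and \eqref{auxx3}). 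Any remaining compact intermediate interval is then handled by a direct sharper monotonicity check or by the piecewise convexity/concavity analysis used earlier in Lemma \ref{tech}.
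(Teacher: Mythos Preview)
Your proposal is more of a strategy sketch than a proof, and it diverges substantially from the paper's route. The paper does not argue via ``local Taylor at $p=2$ plus asymptotics at $p\to\infty$ plus something on a compact middle''. Instead it isolates two clean auxiliary inequalities valid uniformly for all $p\geq 2$:
\[
\frac{\sin\frac{\pi}{2p}}{\sin\frac{\pi}{2(p-1)}}\leq \left(\frac{p-1}{p}\right)^{1/2}
\qquad\text{and}\qquad
\cos^{p-1}\frac{\pi}{2p}\geq 2^{-1/2}.
\]
The first is proved by studying the single-variable function $\xi(x)=x^{-1/2}\sin x$ on $[0,\pi/2]$; raising it to the $(p-1)$-th power gives $\bigl(\sin\frac{\pi}{2p}/\sin\frac{\pi}{2(p-1)}\bigr)^{p-1}\leq 2^{-1/2}$ for every $p\geq 2$, and then the concavity bound $\cos\frac{\pi}{2p}-2^{-1/2}\geq(1-2/p)(1-2^{-1/2})=\mu_p$ finishes \eqref{auxx1} directly. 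The second auxiliary bound (proved by showing $p\mapsto\cos^{p-1}\frac{\pi}{2p}$ is increasing) disposes of \eqref{auxx2} and \eqref{auxx3} in a few lines, using only $\cos x\geq 1-2x/\pi$, the trivial bound $\sin\frac{\pi}{2p}\geq\sin\frac{\pi}{2p(p-1)}$, and the mean value theorem. No local-versus-global patching is needed.

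The concrete gap in your outline is in \eqref{auxx1}. You correctly observe that monotonicity of $x\mapsto\sin x/x$ gives $\sin\frac{\pi}{2p}/\sin\frac{\pi}{2(p-1)}>(p-1)/p$, but this is a \emph{lower} bound on the ratio, whereas what is needed is an \emph{upper} bound (to control the numerator of the fraction subtracted in \eqref{auxx1}). Your proposed fix---refining via $\sin x/x=1-x^2/6+O(x^4)$---is not turned into any explicit inequality, and it is not clear what uniform-in-$p$ upper bound it would produce; note that $\bigl((p-1)/p\bigr)^{p-1}\to 1/e$, so you need something genuinely sharper than the first-order ratio. Likewise, ``any remaining compact intermediate interval is then handled by a direct sharper monotonicity check'' is not a proof step; without an explicit argument there the claim is unverified. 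The paper's $\bigl((p-1)/p\bigr)^{1/2}$ bound is exactly the missing idea that makes \eqref{auxx1} go through cleanly without any interval splitting.
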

\begin{proof}[Proof of \eqref{auxx1}]
First we will prove that for $p\geq 2$ we have the estimate
\begin{equation}\label{gen0}
\frac{\sin \frac{\pi}{2p}}{\sin\frac{\pi}{2(p-1)}}\leq \left(\frac{p-1}{p}\right)^{1/2}.
\end{equation}
To this end, consider the function $\xi(x)=x^{-1/2}\sin x$, $x\in [0,\pi/2]$ (we set $\xi(0)=0$). We easily check that $\xi'(x)=x^{-3/2}\cos x(x-\frac{1}{2}\tan x)$, so there is $x_0\in (\pi/4,\pi/2)$ such that $\xi$ increases on $[0,x_0]$ and decreases on $[x_0,\pi/2]$. Now, take a look at the difference
$$ \left(\frac{\pi}{2p}\right)^{-1/2}\sin\frac{\pi}{2p}-\left(\frac{\pi}{2(p-1)}\right)^{-1/2}\sin\frac{\pi}{2(p-1)}.$$
If $p\geq p_0$, where $\pi/(2(p_0-1))=x_0$, then the difference is nonpositive: this is due to the monotonicity of $\xi$. Now, if we decrease $p$ from $p_0$ to $2$, then the expression $\left(\frac{\pi}{2p}\right)^{-1/2}\sin\frac{\pi}{2p}$ increases, while $\left(\frac{\pi}{2(p-1)}\right)^{-1/2}\sin\frac{\pi}{2(p-1)}$ decreases (again, this follows from the monotonicity of $\xi$ and the fact that $ \pi/(2p)\leq x_0$). It suffices to note that for $p=2$ the difference is zero; this proves that for any $p\geq 2$ the difference is nonpositive, which is equivalent to \eqref{gen0}. This inequality implies
$$ \left(\frac{\sin \frac{\pi}{2p}}{\sin\frac{\pi}{2(p-1)}}\right)^{p-1}\leq \left(\frac{p-1}{p}\right)^{(p-1)/2}\leq \left(\frac{2-1}{2}\right)^{1/2}=2^{-1/2}.$$
Consequently,
\begin{align*}
 1-\frac{\sin^{p-1}\frac{\pi}{2p}}{\cos\frac{\pi}{2p}\sin^{p-1}\frac{\pi}{2(p-1)}}&\geq 1-\frac{2^{-1/2}}{\cos\frac{\pi}{2p}}=\frac{\cos\frac{\pi}{2p}-2^{-1/2}}{\cos\frac{\pi}{2p}}\geq \cos\frac{\pi}{2p}-2^{-1/2}.
\end{align*}
Now, by the concavity of the cosine function on $[0,\pi/4]$, we have  $\cos x-2^{-1/2}\geq (1-4x/\pi)(1-2^{-1/2})$; plugging $x=\pi/(2p)$ and working a little bit, we get \eqref{auxx1}.
\end{proof}

\begin{proof}[Proof of \eqref{auxx2} and \eqref{auxx3}]
First we will show that 
\begin{equation}\label{gen2}
\cos^{p-1}\frac{\pi}{2p}\geq 2^{-1/2}.
\end{equation}
To this end, we will prove that the left-hand side is an increasing function of $p$ (note that for $p=2$ both sides are equal). Plugging $x=1/p$ and taking logarithm, this is equivalent to saying that the function $x\mapsto (x^{-1}-1)\ln\cos\frac{\pi}{2}x$ is increasing on $[0,1/2]$. Differentiating and manipulating a little bit, we obtain the equivalent statement
$$ H(x):=\ln\cos\frac{\pi}{2}x+\frac{\pi}{2}(x-x^2)\tan\frac{\pi}{2}x\geq 0.$$
However,
$$ H'(x)=-\frac{\pi}{2}\tan \frac{\pi}{2}x+\frac{\pi^2}{4}(x-x^2)\cos^{-2}\frac{\pi}{2}x$$
has the same sign as $-\sin \pi x+(1-x)\pi/2$. This expression is positive for $x=0$, decreasing on $[0,1/2]$ and negative for $x=1/2$. Consequently, there is $x_0\in (0,1/2)$ such that $H$ increases on $(0,x_0)$ and decreases on $(x_0,1)$. However, $H(0)=0$ and $H(1/2)=-\frac{1}{2}\ln 2+\frac{\pi}{8}\geq 0$. This shows $H\geq 0$ on $[0,1/2]$ and completes the proof of \eqref{gen2}. This estimate, together with the trivial bound $\sin\frac{\pi}{2p}\geq \sin\frac{\pi}{2p(p-1)}$, implies
$$ \frac{\cos^{p-2}\frac{\pi}{2p}\sin^{p-2}\frac{\pi}{2p}\cos\frac{\pi}{p}}{\sin^{p-2}\frac{\pi}{2p(p-1)}}\geq \frac{2^{-1/2}\cos\frac{\pi}{p}}{\cos\frac{\pi}{2p}}\geq 2^{-1/2}\cos\frac{\pi}{p}\geq 2^{-1/2}\cdot \frac{p-2}{p}\geq \mu_p.$$
Here in the middle we have used the elementary estimate $\cos x\geq 1-\frac{2}{\pi}x$ for $x\in [0,\pi/2]$ (and applied it to $x=\pi/p$). Thus \eqref{auxx2} is established, and \eqref{auxx3} also follows quickly: by \eqref{gen2},
$$ \frac{\cos^{p-1}\frac{\pi}{2p}}{\sin\frac{\pi}{2p}}-1\geq \frac{2^{-1/2}}{\sin\frac{\pi}{2p}}-1\geq 2^{-1/2}-\sin\frac{\pi}{2p}=\sin\frac{\pi}{4}-\sin\frac{\pi}{2p}\geq \frac{\pi}{4\sqrt{2}}\cdot \frac{p-2}{p}\geq \mu_p,$$
where the difference of the sine functions was bounded with the use of mean-value theorem. The proof is complete.
\end{proof}

Consider $V_p:\R\times \R\to \R$ given by
$$ V_p(x,y)=\begin{cases}
\displaystyle \beta_pR^p\cos\left(p\left(\frac{\pi}{2}-\theta\right)\right) & \mbox{if }\theta\geq \frac{\pi}{2}-\frac{\pi}{2(p-1)},\\
-\gamma_p|x|^p & \mbox{if }\theta\leq \frac{\pi}{2}-\frac{\pi}{2(p-1)}, 
\end{cases}$$
where 
$$\beta_p=\frac{\cos^{p-1}\frac{\pi}{2p}}{\sin\frac{\pi}{2p}}$$ and $$ \gamma_p=\frac{\cos^{p-1}\frac{\pi}{2p}}{\sin\frac{\pi}{2p}\sin^{p-1}\frac{\pi}{2(p-1)}}.$$
We have used the polar coordinates: $|x|=R\cos\theta$, $|y|=R\sin\theta$, $R\geq 0$ and $\theta\in [0,\pi/2]$.

\begin{lemma}
The function $V_p$ is superharmonic and for each $x\in \R$, the function $V_p(x,\cdot)$ is convex.
\end{lemma}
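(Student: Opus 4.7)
The plan is to treat the two regions $\theta\geq\theta_0:=\pi/2-\alpha$ and $\theta\leq\theta_0$ separately (where $\alpha:=\pi/(2(p-1))$) and then match them across the interface ray. Working in the first quadrant (the others follow from the symmetry $V_p(\pm x,\pm y)=V_p(|x|,|y|)$), the identity $|y|+i|x|=Re^{i(\pi/2-\theta)}$ rewrites the angular piece as $V_p=\beta_p\,\mathrm{Re}\bigl((|y|+i|x|)^p\bigr)$, which is the real part of a holomorphic function of $|y|+i|x|$ and hence harmonic in $(x,y)$ away from the coordinate axes. In the other region $V_p=-\gamma_p|x|^p$, and $\Delta V_p=-\gamma_p p(p-1)|x|^{p-2}\leq 0$. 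So the pointwise Laplacian is nonpositive on each piece.

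The crux is the matching at $\theta=\theta_0$. Using $p\alpha=\pi/2+\alpha$ one gets $\cos(p\alpha)=-\sin\alpha$ and $\sin(p\alpha)=\cos\alpha$, so continuity of $V_p$ at $\theta_0$ reduces to $\beta_p\sin\alpha=\gamma_p\sin^p\alpha$, which is exactly the defining relation for $\gamma_p$. The angular (i.e.\ normal) derivatives along this ray are $\beta_p R^p p\sin(p\alpha)=\beta_p R^p p\cos\alpha$ on the angular side and $\gamma_p R^p p\sin^{p-1}\alpha\cos\alpha$ on the $-\gamma_p|x|^p$ side, and these agree by the same relation for $\gamma_p$. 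Hence the jump in the normal derivative across the interface is zero, no positive singular mass is produced, and the distributional Laplacian coincides with the nonpositive pointwise one. Across the coordinate axes the symmetric dependence on $(|x|,|y|)$ could in principle create jumps, but expanding $(|y|+i|x|)^p=|y|^p+ip|y|^{p-1}|x|+O(|x|^2)$ shows $\partial_{|x|}V_p$ vanishes at $|x|=0$ in the angular region (the linear term in $|x|$ is purely imaginary); on the $x$-axis one sits in the $\theta\leq\theta_0$ region where $V_p$ does not depend on $y$, so $\partial_{|y|}V_p=0$ there. Symmetry then rules out any jump at either axis, and superharmonicity follows.

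For convexity of $y\mapsto V_p(x,y)$ with $x$ fixed, note that this function is even in $y$. For $|y|\leq y^*:=|x|\cot\alpha$ one is in the constant region $-\gamma_p|x|^p$; for $|y|>y^*$ one has $V_p(x,y)=\beta_p\,\mathrm{Re}\bigl((|y|+i|x|)^p\bigr)$, and writing $|y|+i|x|=\rho e^{i\phi}$ with $\phi=\arctan(|x|/|y|)<\alpha$ yields
\[
\partial_{|y|}V_p=\beta_p p\rho^{p-1}\cos((p-1)\phi),\qquad \partial_{|y|}^2V_p=\beta_p p(p-1)\rho^{p-2}\cos((p-2)\phi).
\]
Both cosines are positive since $(p-1)\phi<(p-1)\alpha=\pi/2$ and $(p-2)\phi<\pi/2$. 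Moreover, at $|y|=y^*$ one has $\phi=\alpha$, so $\cos((p-1)\phi)=0$, and the one-sided $|y|$-derivative from the angular side matches the zero derivative from the constant side. Thus $V_p(x,\cdot)$ is $C^1$, convex on $[0,\infty)$ and attains its minimum at $0$; by evenness, it is convex on all of $\R$.

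The main obstacle is the interface matching at $\theta=\theta_0$: the specific choices of $\beta_p$ and $\gamma_p$ are engineered precisely so that value and angular derivative both match there, which is simultaneously what prevents a positive singular contribution to $\Delta V_p$ and what makes the one-sided $|y|$-derivatives agree at $|y|=y^*$ in the convexity argument. Once this matching is verified, everything else reduces to the harmonicity of $\mathrm{Re}\bigl((|y|+i|x|)^p\bigr)$, the trivial Laplacian of $-\gamma_p|x|^p$, and the sign analysis of $\cos((p-1)\phi)$ and $\cos((p-2)\phi)$.
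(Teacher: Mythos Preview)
Your proof is correct and follows essentially the same approach as the paper: verify $C^1$ regularity across the interface $\theta=\theta_0$, then check that $\Delta V_p\leq 0$ and $(V_p)_{yy}\geq 0$ on each of the two pieces separately. The paper compresses the $C^1$ matching and the axis analysis into the single phrase ``It is not difficult to check that $V_p$ is of class $C^1$,'' whereas you spell out explicitly that the choice of $\gamma_p$ makes both the value and the angular derivative agree at $\theta_0$, and that the one-sided $|y|$-derivatives match at $|y|=|x|\cot\alpha$; otherwise the two arguments are the same, down to the key observation that $(p-2)(\pi/2-\theta)\in[0,\pi/2)$ on the angular region so that $\cos\bigl((p-2)(\pi/2-\theta)\bigr)\geq 0$.
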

\begin{proof}
It is not difficult to check that $V_p$ is of class $C^1$, so it suffices to verify that $\Delta_{y} V_p\leq 0$ and $(V_p)_{yy}\geq 0$, for $\theta<\frac{\pi}{2}-\frac{\pi}{2(p-1)}$ and for $\theta>\frac{\pi}{2}-\frac{\pi}{2(p-1)}$. In the first case the inequalities are trivial. $\Delta_y V_p(x,y)=-p(p-1)\gamma_p|x|^{p-2}<0$ and $(V_p)_{yy}(x,y)=0$; on the set $\theta>\frac{\pi}{2}-\frac{\pi}{2(p-1)}$ the laplacian vanishes and we have
$$ (V_{p})_{yy}(x,y)=p(p-1)\beta_p R^{p-2}\cos\left((p-2)\left(\frac{\pi}{2}-\theta\right)\right)\geq 0.$$
This proves the assertion.
\end{proof}

As in the case $1<p<2$, the main difficulty lies in proving the appropriate majorization condition.

\begin{lemma}
We have
\begin{equation}\label{majo>2}
V_p(x,y)\geq |y|^p-\cot^p\frac{\pi}{2p}|x|^p+\mu_p\left||y|-\cot\frac{\pi}{2p}|x|\right|^p.
\end{equation}
\end{lemma}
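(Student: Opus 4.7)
The plan. By positive $1$-homogeneity both sides scale as $R^p$, so I may assume $R=(|x|^2+|y|^2)^{1/2}=1$ and write $|x|=\cos\theta$, $|y|=\sin\theta$ with $\theta\in[0,\pi/2]$. The task becomes to verify that
$$F(\theta) := V_p(\cos\theta,\sin\theta) - \sin^p\theta + \cot^p\tfrac{\pi}{2p}\cos^p\theta - \mu_p\bigl|\sin\theta - \cot\tfrac{\pi}{2p}\cos\theta\bigr|^p$$
is nonnegative on $[0,\pi/2]$. Let $\theta_1=\pi/2-\pi/(2(p-1))$ be the angle at which the two branches of $V_p$ glue, and $\theta_*=\pi/2-\pi/(2p)\in(\theta_1,\pi/2)$ the angle at which $|y|=\cot\tfrac{\pi}{2p}|x|$. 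At $\theta_*$ every term of $F$ vanishes, and a direct computation gives $F(\theta_*)=F'(\theta_*)=0$ and
$$F''(\theta_*) \;=\; p(p-1)\cos^{p-2}\tfrac{\pi}{2p}\,\cos\tfrac{\pi}{p}\big/\sin^2\tfrac{\pi}{2p} \;>\;0$$
(the $\mu_p$-term vanishes to order $p>2$ at $\theta_*$ and is negligible), so $\theta_*$ is a strict local minimum with value $0$. The problem is to propagate this globally by handling the two regions $[0,\theta_1]$ and $[\theta_1,\pi/2]$ separately.

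On the easier region $[0,\theta_1]$ one has $V_p=-\gamma_p\cos^p\theta$ and $\sin\theta\leq\cot\tfrac{\pi}{2p}\cos\theta$. Dividing by $\cos^p\theta$ and putting $t=\tan\theta\in[0,\cot\tfrac{\pi}{2(p-1)}]$, the claim reduces to
$$\cot^p\tfrac{\pi}{2p} - \gamma_p \;\geq\; t^p + \mu_p\bigl(\cot\tfrac{\pi}{2p} - t\bigr)^p.$$
The right-hand side is a sum of two convex functions of $t$ (both of $|z|^p$ type with $p>1$), hence convex, hence attains its maximum on the interval at one of the endpoints, so it suffices to verify the inequality at $t=0$ and at $t=\cot\tfrac{\pi}{2(p-1)}$. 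The $t=0$ endpoint simplifies, after cancellation, to $\mu_p \leq 1 - \sin^{p-1}\tfrac{\pi}{2p}/[\cos\tfrac{\pi}{2p}\sin^{p-1}\tfrac{\pi}{2(p-1)}]$, which is exactly \eqref{auxx1}; the $t=\cot\tfrac{\pi}{2(p-1)}$ endpoint coincides, by continuity of $V_p$ at $\theta_1$, with the boundary value of the other region's inequality below.

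On the main region $[\theta_1,\pi/2]$ I substitute $\phi=\pi/2-\theta\in[0,\pi/(2(p-1))]$ so that the claim becomes $H(\phi)\geq 0$ with
$$H(\phi) := \beta_p\cos(p\phi) - \cos^p\phi + \cot^p\tfrac{\pi}{2p}\sin^p\phi - \mu_p\bigl|\cos\phi - \cot\tfrac{\pi}{2p}\sin\phi\bigr|^p,$$
the absolute value changing sign at $\phi_*=\pi/(2p)$. I split at $\phi_*$ and on each sub-interval apply the iterated-derivative scheme used in the proof of \eqref{majo<2}: define $H_1=H$, then $H_2=(\text{positive trigonometric weight})\cdot H_1'$, then $H_3=(\text{positive trigonometric weight})\cdot H_2'$; read off the monotonicity of $H_3$ directly, and unwind. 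The crucial sign check, $H_3(\phi_*)\leq 0$, is precisely \eqref{auxx2}, and produces convexity-then-concavity of $H_1$ on $[0,\phi_*]$; combined with the double root $H_1(\phi_*)=H_1'(\phi_*)=0$ and the endpoint value $H_1(0)=\beta_p-1-\mu_p\geq 0$ --- which is exactly \eqref{auxx3} --- this forces $H_1\geq 0$ there. The sub-interval $[\phi_*,\pi/(2(p-1))]$ is handled by the same scheme.

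The main obstacle lies in the derivative bookkeeping for the main region: one iterates differentiation three times through a sum involving the oscillatory $\cos(p\phi)$, two trigonometric $p$-th powers, and the extra $\mu_p|\cdot|^p$ term, keeping the signs straight at each pass and identifying precisely the step at which each of \eqref{auxx1}--\eqref{auxx3} enters. This is entirely parallel to, but somewhat more intricate than, the corresponding analysis carried out for \eqref{majo<2}.
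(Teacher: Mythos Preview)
Your strategy is essentially the paper's: polar coordinates, split at the glueing angle $\theta_1=\tfrac{\pi}{2}-\tfrac{\pi}{2(p-1)}$, and on each piece run an iterated-derivative scheme that consumes the three auxiliary bounds \eqref{auxx1}--\eqref{auxx3}. Your treatment of the first region is in fact cleaner than the paper's: you observe that $t\mapsto t^p+\mu_p(\cot\tfrac{\pi}{2p}-t)^p$ is convex on $[0,\cot\tfrac{\pi}{2(p-1)}]$ and so the inequality only needs to be checked at the two endpoints, whereas the paper differentiates and analyzes a sign-changing factor. Both reduce the $t=0$ endpoint to \eqref{auxx1} and defer the other endpoint to the second region by continuity.

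On the main region your sketch has the right shape but one concrete misstatement and one structural difference you should be aware of. You assert ``convexity-then-concavity of $H_1$ on $[0,\phi_*]$,'' but your own computation gives $H''(\phi_*)=p(p-1)\cos^{p-2}\tfrac{\pi}{2p}\cos\tfrac{\pi}{p}/\sin^2\tfrac{\pi}{2p}>0$, so $H$ is \emph{convex} at $\phi_*$, not concave; the pattern you need is concave-then-convex. With that reversal the logic does close: on the convex piece near $\phi_*$ the tangent line at the double zero gives $H\geq0$, and on the concave piece the chord between $H(0)\geq0$ (which is \eqref{auxx3}) and the junction value gives $H\geq0$. Second, the paper does \emph{not} split at $\phi_*$; it treats the whole region $[\theta_1,\pi/2]$ at once after dividing through by $\sin^p\theta$, and it is precisely this normalization that makes the weighted derivatives $H_2,H_3$ tractable: the third-level function $H_3$ becomes a two-term expression whose uniform nonpositivity on $[\theta_1,\theta_*]$ is exactly \eqref{auxx2}, after which a single up--down--up pattern for $H_1$ falls out. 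In your undivided formulation the role of \eqref{auxx2} is less transparent (it does not literally encode ``$H_3(\phi_*)\leq0$''), and the second sub-interval $[\phi_*,\pi/(2(p-1))]$ is not dispatched by the sentence ``handled by the same scheme'' --- that is where \eqref{auxx2} actually lives in the paper's argument, and you have not said how it enters yours.
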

\begin{proof}
We consider two cases separately.

\emph{The case $\theta\leq \frac{\pi}{2}-\frac{\pi}{2(p-1)}$}. Here the situation is simple. The majorization can be rewritten in the form
$$ \tan^p\theta+\gamma_p-\cot^p\frac{\pi}{2p}+\frac{\mu_p\cos^p\left(\theta+\frac{\pi}{2p}\right)}{\sin^p\frac{\pi}{2p}\cos^p\theta}\leq 0.$$
Denote the left-hand side by $H_1(\theta)$ and compute that
$$ H_1'(\theta)=\frac{p\sin^{p-1}\theta}{\cos^{p+1}\theta}\left[1-\frac{\mu_p}{\sin^{p-1}\frac{\pi}{2p}}\left(\frac{\cos\left(\theta+\frac{\pi}{2p}\right)}{\sin\theta}\right)^{p-1}\right].$$
Since
$$ \left(\frac{\cos\left(\theta+\frac{\pi}{2p}\right)}{\sin\theta}\right)'=-\frac{\cos\frac{\pi}{2p}}{\sin^2\theta}<0,$$
the expression in the square brackets above is an increasing function of $\theta$; this expression is negative when $\theta$ is close to $0$ and may be positive/nonpositive for $\theta=\frac{\pi}{2}-\frac{\pi}{2(p-1)}$. Consequently, it is enough to check the majorization for $\theta=0$ (and then it holds: see \eqref{auxx1}) and for $\theta=\frac{\pi}{2}-\frac{\pi}{2(p-1)}$ (this will follow from the next case).

\emph{The case $\theta\geq \frac{\pi}{2}-\frac{\pi}{2(p-1)}$}. Here the calculations are more elaborate. We must show that
$$ -\beta_p\frac{\cos(p(\frac{\pi}{2}-\theta))}{\sin^p\theta}+1-\cot^p\frac{\pi}{2p}\cot^p\theta+\frac{\mu_p}{\sin^p\frac{\pi}{2p}}\left|\frac{\cos(\theta+\frac{\pi}{2p})}{\sin\theta}\right|^p\leq 0.$$
Denote the left-hand side by $H_1(\theta)$ and differentiate to obtain
$$ \frac{\sin^{p+1}\theta}{p\cos^{p-1}\theta}H_1'(\theta)=\beta_p\frac{\cos(\frac{p\pi}{2}-(p-1)\theta)}{\cos^{p-1}\theta}+\cot^p\frac{\pi}{2p}\pm\frac{\mu_p\cos\frac{\pi}{2p}}{\sin^p\frac{\pi}{2p}}\left|\frac{\cos(\theta+\frac{\pi}{2p})}{\cos\theta}\right|^{p-1},$$
where $\pm=-\operatorname*{sgn}\cos(\theta+\frac{\pi}{2p})$. Denote the right-hand side by $H_2(\theta)$ and compute that
$$ H_3(\theta)= \frac{\cos^p\theta}{p-1}H_2'(\theta)=\beta_p\sin\left(\frac{p\pi}{2}-(p-2)\theta\right)+\frac{\mu_p\cos\frac{\pi}{2p}}{\sin^{p-1}\frac{\pi}{2p}}\left|\cos\left(\theta+\frac{\pi}{2p}\right)\right|^{p-2}.$$
The function $\theta\mapsto \sin\left(\frac{p\pi}{2}-(p-2)\theta\right)=-\sin\left((p-2)\left(\frac{\pi}{2}-\theta\right)\right)$ is increasing on the interval $[\frac{\pi}{2}-\frac{\pi}{2(p-1)},\frac{\pi}{2}]$, while $\theta\mapsto |\cos(\theta+\frac{\pi}{2p})|$ is decreasing on $[\frac{\pi}{2}-\frac{\pi}{2(p-1)},\frac{\pi}{2}-\frac{\pi}{2p}]$ and increasing on $[\frac{\pi}{2}-\frac{\pi}{2p},\frac{\pi}{2}]$. This implies the following: if $\theta\in [\frac{\pi}{2}-\frac{\pi}{2(p-1)},\frac{\pi}{2}-\frac{\pi}{2p}]$, then $H_3(\theta)$ does not exceed
\begin{align*}
& \beta_p\sin\left(\frac{p\pi}{2}-(p-2)\left(\frac{\pi}{2}-\frac{\pi}{2p}\right)\right)+\frac{\mu_p\cos\frac{\pi}{2p}}{\sin^{p-1}\frac{\pi}{2p}}\left|\cos\left(\frac{\pi}{2}-\frac{\pi}{2(p-1)}+\frac{\pi}{2p}\right)\right|^{p-2}\\
&=-\beta_p\cos\frac{\pi}{p}+\frac{\mu_p\cos\frac{\pi}{2p}}{\sin^{p-1}\frac{\pi}{2p}}\sin^{p-2}\frac{\pi}{2p(p-1)}\leq 0
\end{align*}
(see \eqref{auxx2}). On the other hand, $H_3$ increases on $[\frac{\pi}{2}-\frac{\pi}{2p},\frac{\pi}{2}]$ and $H_3(\pi/2)>0$. Consequently, there is $\theta_0\in [\frac{\pi}{2}-\frac{\pi}{2p},\frac{\pi}{2}]$ such that $H_3$ is negative on $[\frac{\pi}{2}-\frac{\pi}{2(p-1)},\theta_0)$ and positive on $(\theta_0,\frac{\pi}{2}]$. This implies that $H_2$ decreases on the first interval and increases on the second. But $H_2(\frac{\pi}{2}-\frac{\pi}{2p})=0$ and $H_2(\frac{\pi}{2})>0$. This implies that there is $\theta_1\in [\theta_0,\frac{\pi}{2}]$ such that $H_1$ is increasing on $[\frac{\pi}{2}-\frac{\pi}{2(p-1)},\frac{\pi}{2}-\frac{\pi}{2p}]$, decreasing on $[\frac{\pi}{2}-\frac{\pi}{2p},\theta_1]$ and increasing on $[\theta_1,\frac{\pi}{2}]$. Since $H_1(\frac{\pi}{2}-\frac{\pi}{2p})=0$, the desired inequality $H_1\leq 0$ follows from $H_1(\frac{\pi}{2})\leq 0$, which is guaranteed by \eqref{auxx3}. 
\end{proof}

\begin{proof}[Proof of \eqref{maino>2}]
The argument is the same as in the case $p<2$. We omit the details.
\end{proof}

\begin{proof}[Sharpness]
Let $\e>0$ be fixed. Take $\xi=\frac{\pi}{2p}-b_p\e$, where
$$ b_p=\frac{\sin\frac{\pi}{p}}{p(p-2)}.$$
Let $(X,Y)$ be a two-dimensional Brownian motion starting from $(0,0)$ and stopped upon reaching the boundary of the set $D=\{(x,y):y+1\geq \cot\xi|x|\}$. A direct use of Ito's formula, applied to the harmonic functions $R^\alpha \cos \alpha \theta$, shows that
$$ \E |Y_\infty+1|^p=\frac{\cos^p\xi}{\cos p\xi}, \E |Y_\infty+1|^{p-1}=\frac{\cos^{p-1}\xi}{\cos (p-1)\xi}, \E |Y_\infty+1|^{p-2}=\frac{\cos^{p-2}\xi}{\cos (p-2)\xi}$$
and
$$ \E |X_\infty|^p=\frac{\sin^p \xi}{\cos p\xi}.$$
Therefore, using the elementary inequality $y^{p/2}-x^{p/2}\leq \frac{p}{2}y^{p/2-1}(y-x)$, valid for $x,\,y\geq 0$, we compute that
\begin{align*}
 &||Y||_p^p-\left(\cot\frac{\pi}{2p}-\e\right)^p||X||_p^p\\
&=||Y+1||_p^p-\left(\cot\frac{\pi}{2p}-\e\right)^p||X||_p^p+\big(||Y||_p^p-||Y+1||_p^p\big)\\
&\geq \frac{\cos^p\xi}{\cos p\xi}-\left(\cot\frac{\pi}{2p}-\e\right)^p\frac{\sin^p \xi}{\cos p\xi}-\frac{p}{2}\left(\E|Y_\infty+1|^{p/2}(|Y_\infty+1|^2-|Y_\infty|^2\right)\\
&=\frac{\sin^p\xi}{\cos p\xi}\left[\cot^p\xi-\left(\cot\frac{\pi}{2p}-\e\right)^p\right]-\frac{p}{2}\left[\frac{2\cos^{p-1}\xi}{\cos(p-1)\xi}-\frac{\cos^{p-2}\xi}{\cos(p-2)\xi}\right].
\end{align*}
Letting $\e\to 0$, we  see that $\xi\to \pi/(2p)$, so calculating a little bit gives that the above expression converges to
\begin{align*}
&\cos^{p-2}\frac{\pi}{2p}\left[\frac{\sin\frac{\pi}{p}}{b_p}-\frac{4(p-1)\cos^2\frac{\pi}{2p}-p}{2\sin\frac{\pi}{p}}\right]\\
&\qquad =\cos^{p-2}\frac{\pi}{2p}\left[p(p-2)-\frac{4(p-1)\cos^2\frac{\pi}{2p}-p}{2\sin\frac{\pi}{p}}\right].
\end{align*}
However, $\sin\frac{\pi}{p}> 2/p$ and
\begin{align*}
 4(p-1)\cos^2\frac{\pi}{2p}-p&=4\left(\frac{p}{2}-1\right)\cos^2\frac{\pi}{2p}+2p\left(\cos^2\frac{\pi}{2p}-\frac{1}{2}\right)\\
 &\leq 4\left(\frac{p}{2}-1\right)+2p\left(\frac{\pi}{4}-\frac{\pi}{2p}\right)\leq 4(p-2),
\end{align*}
where in the middle we have used the estimate $\cos^2 x-\cos^2\frac{\pi}{4}\leq \frac{\pi}{4}-x$, which follows directly from the mean-value property. Putting all the above facts together we see that if $\e$ is sufficiently small, then $||Y||_p\geq \left(\cot\frac{\pi}{2p}-\e\right)||X||_p$. On the other hand, consider the set $D\cap \R\times [-1/100,1/100]$. With some positive probability (which can be bounded from below by a positive constant $\eta$ not depending on $p$), the process $(X,Y)$ never leaves this set. That is it terminates inside it. If such a situation occurs, then $|X_\infty|\geq \frac{99}{100}\tan \xi$ and $|Y_\infty|\leq 1/100$.  Thus  for small $\e$,
$$ \left| |Y_\infty|-\cot\frac{\pi}{2p}|X_\infty|\right|\geq \frac{99}{100}\tan \xi\cot\frac{\pi}{2p}-\frac{1}{100}\geq 1/2$$
and hence $|||Y_\infty|-\cot\frac{\pi}{2p}|X_\infty|||_p\geq \eta^{1/p}/2$. It remains to note that
$$ ||X||_p=\frac{\sin\xi}{(\cos p\xi)^{1/p}}=\sin \frac{\pi}{2p}\left(\sin \frac{\pi}{p}\cdot \frac{\e}{p-2}\right)^{-1/p}+o(\e).$$
This proves the optimality of the constants.
\end{proof}

\section{Proofs of results for Fourier multipliers}\label{multe}
Here we will show how the martingale inequalities studied in the preceding section yield the corresponding stability results for Fourier multipliers.

\subsection{Proof of inequalities \eqref{mainf<2} and \eqref{mainf>2} in Theorem \ref{mainthmf}} Let us begin by recalling the probabilistic representation of the multipliers from the class \eqref{defm}. We follow here the description  in \cite{BBB} and \cite{BB} and refer the reader to those papers for full details.  Let $\nu$ be a finite, nonzero L\'evy measure on $\R^d$, i.e., a nonnegative Borel measure on $\R^d$ which does not charge the origin and satisfies $\nu(\R^d)<\infty$ and 
$$ \int_{\R^d}\min\{|x|^2,1\}\nu(\mbox{d}x)<\infty.$$
Then for any $s<0$, there is a L\'evy process $(X_{s,t})_{t\in [s,0]}$ with $X_{s,s}\equiv 0$, for which Lemmas \ref{diflema} and \ref{diflema2} below hold true. To state these, we need some notation. For a given complex-valued $f\in L^\infty(\R^d)$, define the corresponding parabolic extension $\mathcal{U}_f$ to $(-\infty,0]\times \R^d$ by
$$ \mathcal{U}_f(s,x)=\E f(x+X_{s,0}).$$
Next, fix $x\in \R^d$, $s<0$ and a complex-valued $\phi\in L^\infty(\R^d)$. We introduce the processes $F=(F^{x,s,f}_t)_{s\leq t\leq 0}$ and $G=(G^{x,s,f,\phi}_t)_{s\leq t\leq 0}$ by
\begin{equation}\label{defFG}
\begin{split}
 F_t&=\mathcal{U}_f(t,x+X_{s,t}),\\
 G_t&=\sum_{s<u\leq t}\big[(F_u-F_{u-}) \cdot \phi(X_{s,u}-X_{s,u-})\big]\\
&\quad -\int_s^t\int_{\R^d}\big[\mathcal{U}_f(v,x+X_{s,v-}+z)-\mathcal{U}_f(v,x+X_{s,v-}) \big]\phi(z)\nu(\mbox{d}z)\mbox{d}v.
\end{split}
\end{equation}
Now, fix $s<0$ and define the operator $\mathcal{S}=\mathcal{S}^{s,\phi,\nu}$ by the bilinear form
\begin{equation}\label{defS}
 \int_{\R^d}\mathcal{S}f(x)g(x)\mbox{d}x=\int_{\R^d}\E \big[G_0^{x,s,f,\phi}g(x+X_{s,0})\big]\mbox{d}x,
\end{equation}
where $f,\,g\in C_0^\infty(\R^d)$. Standard density argument implies that if $1<p<\infty$, then the above identity holds true for all $f\in L^p(\R^d)$. 

We have the following facts, proved in \cite{BBB} and \cite{BB}. 
\begin{lemma}\label{diflema}
For any fixed $x,\,s,\,f,\,\phi$ as above, the processes $F^{x,s,f}$, $G^{x,s,f,\phi}$ are martingales with respect to $(\F_t)_{s\leq t\leq 0}=(\sigma(X_{s,t}:s\leq t))_{s\leq t\leq 0}$. Furthermore, if $||\phi||_\infty\leq 1$, then $G^{x,s,f,\phi}$ is differentially subordinate to $F^{x,s,f}$.
\end{lemma}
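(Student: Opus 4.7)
The plan is to treat $F$ and $G$ separately, using the fact that under the standing assumption $\nu(\R^d)<\infty$ the underlying L\'evy process $(X_{s,t})$ is compound Poisson and hence both processes are purely discontinuous. The martingale property of $F$ is the easiest: I would observe that by stationary independent increments of the L\'evy process, the increment $X_{s,0}-X_{s,t}$ is independent of $\F_t$ and has the same distribution as $X_{t,0}$. Conditioning on $\F_t$ and using this fact, one obtains
\[
\E\bigl[f(x+X_{s,0})\mid \F_t\bigr]=\mathcal{U}_f(t,x+X_{s,t})=F_t,
\]
so $F$ is just the closed Doob martingale of the bounded random variable $f(x+X_{s,0})$.

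For $G$, the key observation is that the jumps of $F$ occur precisely at the jump times of $X_{s,\cdot}$, with $F_u-F_{u-}=\mathcal{U}_f(u,x+X_{s,u-}+\Delta X_{s,u})-\mathcal{U}_f(u,x+X_{s,u-})$. Let $N$ denote the Poisson random measure of $(X_{s,t})$ on $(s,0]\times \R^d$ with intensity $dv\otimes \nu(dz)$. The first sum in \eqref{defFG} can then be rewritten as the integral against $N$ of the predictable integrand
\[
H(v,z)=\bigl[\mathcal{U}_f(v,x+X_{s,v-}+z)-\mathcal{U}_f(v,x+X_{s,v-})\bigr]\phi(z),
\]
while the second term in \eqref{defFG} is exactly the corresponding compensator $\int H(v,z)\,\nu(dz)\,dv$. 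Thus $G$ equals the stochastic integral of $H$ against the compensated Poisson measure $N-\nu\otimes dv$; since $\mathcal{U}_f$ and $\phi$ are bounded and $\nu$ is finite, $H$ is bounded and integrable on $(s,0]\times \R^d$, so this stochastic integral is a genuine martingale.

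For differential subordination, I would compute the square brackets. Because $F$ and $G$ are purely discontinuous, their brackets reduce to the sum of squared jumps plus the initial square: $[F,F]_t=|F_s|^2+\sum_{s<u\leq t}|\Delta F_u|^2$ and similarly for $G$, with $G_s=0$. The definition of $G$ gives the pointwise relation $\Delta G_u=\phi(\Delta X_{s,u})\,\Delta F_u$ at each jump time, so under $\|\phi\|_\infty\leq 1$ we have $|\Delta G_u|\leq |\Delta F_u|$. Consequently
\[
[F,F]_t-[G,G]_t=|F_s|^2+\sum_{s<u\leq t}|\Delta F_u|^2\bigl(1-|\phi(\Delta X_{s,u})|^2\bigr),
\]
which is nonnegative at $t=s$ and nondecreasing in $t$, i.e.\ $G$ is differentially subordinate to $F$.

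The only genuinely delicate point in this argument is the identification of the compensator term written in \eqref{defFG} as the $\nu\otimes dv$-integral of the predictable integrand $H$; everything else is essentially bookkeeping with Poisson random measures. I would present that identification carefully, but the computation is a standard application of the L\'evy--It\^o decomposition in the present finite-intensity regime.
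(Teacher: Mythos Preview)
Your argument is correct and essentially complete. Note, however, that the paper does not actually prove this lemma: it is stated as a fact ``proved in \cite{BBB} and \cite{BB}'' and no argument is given in the text. What you have written is therefore a self-contained proof rather than a reconstruction of something in the paper, and it matches the standard reasoning in those references.

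One small point worth making explicit when you write it up: your computation of $[F,F]$ as $|F_s|^2+\sum_{s<u\le t}|\Delta F_u|^2$ uses that $F$ is a \emph{purely discontinuous} martingale, i.e.\ that its continuous local martingale part vanishes. This is true, but not entirely automatic from the statement ``$X_{s,\cdot}$ is compound Poisson'': the process $t\mapsto \mathcal{U}_f(t,x+X_{s,t})$ has a smooth $t$-dependence between jumps, so $F$ has a nontrivial finite-variation drift. The clean way to see that $F^c=0$ is either (i) to invoke the general fact that the natural filtration of a compound Poisson process supports no nonconstant continuous local martingales, or (ii) to write, via the backward equation $\partial_v\mathcal{U}_f(v,y)+\int[\mathcal{U}_f(v,y+z)-\mathcal{U}_f(v,y)]\,\nu(dz)=0$, the explicit representation $F_t-F_s=\int_{(s,t]\times\R^d}[\mathcal{U}_f(v,x+X_{s,v-}+z)-\mathcal{U}_f(v,x+X_{s,v-})]\,(N-\nu\otimes dv)(dv,dz)$, which exhibits $F$ directly as a compensated Poisson integral. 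Either route immediately gives the bracket formula you use, and the rest of your argument (the identity $\Delta G_u=\phi(\Delta X_{s,u})\,\Delta F_u$ and the resulting monotonicity of $[F,F]-[G,G]$) goes through exactly as you wrote.
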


The aforementioned representation of Fourier multipliers in terms of L\'evy processes is as follows.

\begin{lemma}\label{diflema2}
Let $1<p<\infty$ and $d\geq 2$. The operator $\mathcal{S}^{s,\phi,\nu}$ is well defined and extends to a bounded operator on $L^p(\R^d)$, which can be expressed as a Fourier multiplier with the symbol
\begin{equation}\label{defMs}
\begin{split}
 &M_{s,\phi,\nu}(\xi)\\
&=\left[1-\exp\left(2s\int_{\R^d}(1-\cos\langle \xi, z\rangle )\nu(\mbox{d}z)\right)\right]
\frac{\int_{\R^d}(1-\cos\langle \xi, z\rangle)\phi(z)\nu(\mbox{d}z)}{\int_{\R^d}(1-\cos\langle \xi, z\rangle)\nu(\mbox{d}z)}
\end{split}
\end{equation}
if $\int_{\R^d}(1-\cos\langle \xi, z\rangle)\nu(\mbox{d}z)\neq 0$, and $M(\xi)=0$ otherwise.
\end{lemma}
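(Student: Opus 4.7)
The plan is to split the verification into two independent parts: the $L^p$-boundedness of $\mathcal{S}^{s,\phi,\nu}$, which will follow directly from Lemma \ref{diflema} and the differential subordination inequality \eqref{burkin}; and the identification of the Fourier symbol, which will rest on an It\^o-type computation for the underlying L\'evy process followed by Plancherel's theorem.

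For $L^p$-boundedness, I would begin with the bilinear definition \eqref{defS}. Lemma \ref{diflema} guarantees that $G^{x,s,f,\phi}$ is differentially subordinate to $F^{x,s,f}$, so \eqref{burkin} yields $\|G_0^{x,s,f,\phi}\|_{L^p(\Omega)} \leq (p^*-1)\|F^{x,s,f}\|_p$, and since $F_0^{x,s,f}= f(x+X_{s,0})$, one has $\|F^{x,s,f}\|_p^p = \E|f(x+X_{s,0})|^p$. A Fubini step combined with translation invariance of Lebesgue measure then shows
\[
\int_{\Rd} \E |G_0^{x,s,f,\phi}|^p \, dx \;\leq\; (p^*-1)^p \|f\|_p^p.
\]
Pairing $G_0$ with $g(x+X_{s,0})$ via H\"older's inequality on $\Omega$ and integrating in $x$ yields $|\int \mathcal{S}f\cdot g\,dx| \leq (p^*-1) \|f\|_p \|g\|_q$, and duality produces the bounded extension of $\mathcal{S}$ to $L^p(\Rd)$.

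For the symbol, the crucial point is that $\mathcal{U}_f(t,\cdot)$ solves the backward Kolmogorov equation for $X_{s,\cdot}$, so $F$ is a pure-jump martingale whose compensator vanishes. Writing $\Delta_z h(y) := h(y+z)-h(y)$, one has
\[
dF_t = \int_{\Rd} \Delta_z \mathcal{U}_f(t,\cdot)(x+X_{s,t-}) \, \widetilde{N}(dt,dz),
\]
and by \eqref{defFG} the integrand of $G$ differs only by the factor $\phi(z)$. Since $\E G_0 = 0$, I may replace $g(x+X_{s,0})$ by the martingale increment $\mathcal{U}_g(0, x+X_{s,0}) - \mathcal{U}_g(s,x)$, whose It\^o differential has the same structure but without the factor $\phi$. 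Applying the It\^o isometry for compensated Poisson integrals, then integrating in $x$---which by translation invariance removes the random shift $X_{s,u-}$---and invoking Plancherel together with the L\'evy--Khintchine formula $\widehat{\mathcal{U}_f(u,\cdot)}(\xi) = e^{u\Psi(\xi)} \hat f(\xi)$, I obtain
\[
\int \mathcal{S}f(x) \overline{g(x)}\, dx = \int \hat f(\xi)\,\overline{\hat g(\xi)} \, M_{s,\phi,\nu}(\xi)\, d\xi,
\]
once the elementary time integral $\int_s^0 e^{2u\Psi(\xi)}\,du$ is evaluated; matching terms reproduces \eqref{defMs}.

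The main obstacle will be bookkeeping. First, $\nu$ need not be symmetric, so $\Psi$ acquires an imaginary part which must be tracked and seen to cancel after pairing $\mathcal{U}_f$ against $\overline{\mathcal{U}_g}$ (only the symmetric combination $1-\cos\langle\xi,z\rangle$ survives under the conjugate symmetrization). Second, several applications of Fubini---between the time, space, jump-space, and probability integrals---need to be justified; this is routine once one initially restricts to $f,g \in C_0^\infty(\Rd)$ and uses the finiteness of $\nu$, after which the $L^p$-bound established in the first part extends the identity to all $f\in L^p(\Rd)$ by density.
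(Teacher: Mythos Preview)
The paper does not actually prove this lemma: immediately before stating Lemmas~\ref{diflema} and~\ref{diflema2} it says ``We have the following facts, proved in \cite{BBB} and \cite{BB},'' and no argument is supplied. So there is no in-paper proof to compare against; your proposal is a self-contained reconstruction of the argument from those references.

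That said, your outline is correct and is essentially the argument of \cite{BB}. The $L^p$-boundedness step via \eqref{burkin}, Fubini, translation invariance and duality is exactly how the bound $\|\mathcal{S}^{s,\phi,\nu}\|_{L^p\to L^p}\leq p^*-1$ is obtained there. For the symbol, your It\^o-isometry computation goes through: writing $\psi(\xi)=\int(e^{i\langle\xi,z\rangle}-1)\,\nu(dz)$ one has $\widehat{\mathcal{U}_f(u,\cdot)}(\xi)=e^{-u\psi(\xi)}\hat f(\xi)$, and after Plancherel in the spatial variable the pairing produces $|e^{i\langle\xi,z\rangle}-1|^2=2(1-\cos\langle\xi,z\rangle)$ together with $e^{-u\psi}\,\overline{e^{-u\psi}}=e^{-2u\,\mathrm{Re}\,\psi}=e^{2u\int(1-\cos\langle\xi,\cdot\rangle)\,d\nu}$. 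The time integral $\int_s^0 e^{2uA}\,du=(1-e^{2sA})/(2A)$ with $A=\int(1-\cos\langle\xi,\cdot\rangle)\,d\nu$ then yields \eqref{defMs} exactly. Your remark that the imaginary part of $\psi$ cancels is correct and occurs precisely at the step $e^{-u\psi}\,\overline{e^{-u\psi}}=e^{-2u\,\mathrm{Re}\,\psi}$; no separate symmetrization of $\nu$ is needed. One small point: the bilinear form \eqref{defS} in the paper is written without a complex conjugate on $g$, whereas you use the sesquilinear pairing; this is harmless (test against $\bar g$ or restrict first to real $f,g$), but worth aligning when you write it out.
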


Equipped with the above facts, we turn our attention to Theorem \ref{mainthmf}. 
The key ingredient in the proof of this statement is contained in the following. Let $U_p$ be the special function used in the proof of Theorem \ref{mainthm}.

\begin{lemma}\label{maincon}
Let $p\in (1,2)\cup (2,\infty)$. Then for any complex-valued function $f\in C_0^\infty(\R^d)$ we have the estimate
\begin{equation}\label{fly}
 \int_{\R^d} U_p(f(x),\mathcal{S}f(x))\mbox{d}x\leq 0, 
\end{equation}
where $U_p$ is the function defined in \eqref{U-p}. 
\end{lemma}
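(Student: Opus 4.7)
The plan is to apply Theorem \ref{Wang} to the pair $(F^{x,s,f},G^{x,s,f,\phi})$ pointwise in $x$, integrate in $x$, and then use the convexity of $U_p$ in its second variable to collapse the expectation into $\mathcal{S}f$. Fix $s<0$ and $\phi\in L^\infty(\R^d)$ with $\|\phi\|_\infty\le 1$. By Lemma \ref{diflema}, for every $x$ the pair $(F^{x,s,f},G^{x,s,f,\phi})$ defined in \eqref{defFG} consists of $\mathbb{H}$-valued martingales (with $\mathbb{H}=\mathbb{C}\simeq\R^2$) such that $G$ is differentially subordinate to $F$, with
$$(F^{x,s,f}_s,G^{x,s,f,\phi}_s)=(\mathcal{U}_f(s,x),0),\qquad F^{x,s,f}_0=f(x+X_{s,0}).$$
Since $U_p$ satisfies the hypotheses of Theorem \ref{Wang} in both ranges $1<p<2$ and $p>2$, a stopping-time approximation combined with the $L^p$-boundedness of $F$ and $G$ established in \cite{BBB} yields
\begin{equation}\label{ptbd}
\E\,U_p(F^{x,s,f}_0,G^{x,s,f,\phi}_0)\le U_p(\mathcal{U}_f(s,x),0).
\end{equation}

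I would then integrate \eqref{ptbd} over $x\in\R^d$, apply Fubini, and perform the pathwise substitution $y=x+X_{s,0}$ (admissible by translation invariance of Lebesgue measure). Using $F^{x,s,f}_0=f(y)$ after the change of variables, this gives
$$\int_{\R^d}\E\,U_p\bigl(f(y),\,G^{y-X_{s,0},s,f,\phi}_0\bigr)\,\mbox{d}y\le\int_{\R^d}U_p(\mathcal{U}_f(s,x),0)\,\mbox{d}x.$$
The convexity lemmas proved in Subsections \ref{p<2no} and \ref{p>2no} show that $U_p(a,\cdot)$ is convex for each fixed $a\in\mathbb{H}$, so Jensen's inequality moves the expectation inside $U_p$:
$$U_p\bigl(f(y),\,\E[G^{y-X_{s,0},s,f,\phi}_0]\bigr)\le\E\,U_p\bigl(f(y),\,G^{y-X_{s,0},s,f,\phi}_0\bigr).$$
Exactly the same change of variables applied to the bilinear form \eqref{defS} shows that for every $g\in C_0^\infty(\R^d)$ one has $\int \mathcal{S}f(y)g(y)\,\mbox{d}y=\int \E[G_0^{y-X_{s,0},s,f,\phi}]\,g(y)\,\mbox{d}y$, so $\mathcal{S}f(y)=\E[G_0^{y-X_{s,0},s,f,\phi}]$ for almost every $y$. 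Combining the displays yields
$$\int_{\R^d}U_p(f(y),\mathcal{S}f(y))\,\mbox{d}y\le\int_{\R^d}U_p(\mathcal{U}_f(s,x),0)\,\mbox{d}x.$$

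It remains to observe that $U_p(a,0)\le 0$ for every $a\in\mathbb{H}$: for $1<p<2$ the formula yields $U_p(a,0)=-p(1-1/p)^{p-1}|a|^p$, while for $p>2$ the inequality $0<(p-2)|a|$ (for $a\ne 0$) places the point $(a,0)$ in the second branch of \eqref{U-p} and gives $U_p(a,0)=-p^{2-p}(p-1)^{2p-2}|a|^p$; in either case the expression is non-positive. Hence the right-hand side above is $\le 0$, which is exactly \eqref{fly}. The only delicate step in the argument is justifying the passage to the limit that produces \eqref{ptbd} together with the swap of integration and expectation; both rest on the uniform $L^p$-bounds from \cite{BBB} and the growth estimate $|U_p(u,v)|\le C_p(|u|+|v|)^p$. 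Everything else is a routine combination of Fubini, a pathwise linear change of variables, Jensen's inequality, and the algebraic sign check of $U_p(\cdot,0)$.
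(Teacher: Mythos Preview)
Your proof is correct and follows essentially the same strategy as the paper: both arguments combine Theorem \ref{Wang} (applied pointwise in $x$ to the pair $(F^{x,s,f},G^{x,s,f,\phi})$), the convexity of $U_p$ in its second variable, the bilinear identity \eqref{defS}, and the sign check $U_p(a,0)\le 0$. The only cosmetic difference is that you invoke convexity via Jensen's inequality after identifying $\mathcal{S}f(y)=\E[G_0^{y-X_{s,0},s,f,\phi}]$ through a pathwise change of variables, whereas the paper uses the equivalent tangent-line inequality $U_p(x,y)+\langle (U_p)_y(x,y),k\rangle\le U_p(x,y+k)$ by plugging the test function $g=(U_p)_y(f,\mathcal{S}f)$ directly into \eqref{defS}; these are two packagings of the same convexity step.
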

\begin{proof}
Take $g(x)=(U_{p})_{y}(f(x),\mathcal{S}f(x))$. Then we have $g\in L^{p/(p-1)}(\R^d)$, because $|(U_{p})_{y}(f(x),\mathcal{S}f(x))|\leq \eta_p(|f(x)|+|\mathcal{S}f(x)|)^{p-1}$ for some constant $\eta_p$ depending only on $p$. Therefore, by \eqref{defS} and Fubini's theorem, we have
$$ \E \int_{\R^d}\mathcal{S}f(x+X_{s,0})g(x+X_{s,0})\mbox{d}x=\E \int_{\R^d}\E \big[G_0^{x,s,f,\phi}g(x+X_{s,0})\big]\mbox{d}x,$$
or
$$ \E \int_{\R^d} (U_{p})_{y}(f(x+X_{s,0}),\mathcal{S}f(x+X_{s,0}))\big[G_0^{x,s,f,\phi}-\mathcal{S}f(x+X_{s,0})\big]\mbox{d}x=0.$$
Hence
\begin{align*}
&\int_{\R^d} U_p(f(x),\mathcal{S}f(x))\mbox{d}x\\
&=\E \int_{\R^d}U_p(f(x+X_{s,0}),\mathcal{S}f(x+X_{s,0}))\mbox{d}x\\
&=\E \int_{\R^d}\bigg\{U_p(f(x+X_{s,0}),\mathcal{S}f(x+X_{s,0}))\\
&\qquad \qquad \qquad +(U_{p})_{y}(f(x+X_{s,0}),\mathcal{S}f(x+X_{s,0}))\big[G_0^{x,s,f,\phi}-\mathcal{S}f(x+X_{s,0})\big]\bigg\}\mbox{d}x\\
&\leq \E \int_{\R^d}U_p(f(x+X_{s,0}),G_0^{x,s,f,\phi})\mbox{d}x\\
&=\int_{\R^d}\E U_p(F^{x,s,f}_0,G_0^{x,s,f,\phi})\mbox{d}x.
\end{align*}
We will be done if we prove that $\E U_p(F^{x,s,f}_0,G_0^{x,s,f,\phi})\leq 0$ for all $x$. This follows from Theorem \ref{Wang} and a limiting argument. Indeed, we know that there is a nondecreasing sequence $(\tau_n)_{n\geq 1}$ of stopping times converging to $0$ (and depending on $x$, $s$, $f$ and $\phi$) such that 
\begin{equation}\label{inmm}
\E U_p(F^{x,s,f}_{\tau_n\wedge 0},G_{\tau_n\wedge 0}^{x,s,f,\phi})\leq 0
\end{equation}
for all $n$. However, from the very definition of $U_p$, there is a constant $C_p>0$ such that 
$$ |U_p(x,y)|\leq C_p(|x|^p+|y|^p).$$
This implies 
$$U_p(F^{x,s,f}_{\tau_n\wedge 0},G_{\tau_n\wedge 0}^{x,s,f,\phi})\leq C_p (((F^{x,s,f})^*)^p+((G^{x,s,f,\phi})^*)^p)$$
(where $X^*$ denotes the maximal function of a martingale $X$). By Doob's inequality and Burkholder's estimate \eqref{burkin}, we see that
$$ \E((F^{x,s,f})^*)^p+((G^{x,s,f})^*)^p)\leq \left(\frac{p}{p-1}\right)^p(1+(p^*-1)^p)\E |F^{x,s,f}_0|^p<\infty,$$
since $||F^{x,s,f}||_\infty\leq ||f||_\infty$. It remains to let $n\to \infty$ in \eqref{inmm} and use Lebesgue's dominated convergence theorem to get the claim.
\end{proof}

We are ready to establish the stability result for Fourier multipliers.

\begin{proof}[Proof of \eqref{mainf<2} and \eqref{mainf>2}]
It suffices to prove the inequality for bounded $f$. We will only give the details for $1<p<2$, in the remaining case the reasoning is analogous. By \eqref{fly} and the majorization \eqref{maj<2} we get
\begin{align*}
 \left(1-\left(1-\frac{1}{p}\right)^{p-1}\right)\int_{\R^d}\frac{((p-1)|\mathcal{S}f(x)|-|f(x)|)^2}{(|f(x)|+|\mathcal{S}f(x)|)^{2-p}}
+(p-1)^p||&\mathcal{S}f(x)||_{L^p(\R^d)}^p\\
&\leq ||f||_{L^p(\R^d)}^p.
\end{align*}
Recall that $\mathcal{S}=\mathcal{S}^{s,\phi,\nu}$ is a Fourier multiplier with the symbol given by \eqref{defMs}. If we let $s\to -\infty$, then the symbol $M_{s,\phi,\nu}$ converges pointwise to 
\begin{equation}\label{defM}
M_{\phi,\nu}(\xi)=\frac{\int_{\R^d}(1-\cos\langle \xi, z\rangle)\phi(z)\nu(\mbox{d}z)}{\int_{\R^d}(1-\cos\langle \xi, z\rangle)\nu(\mbox{d}z)}.
\end{equation}
By Plancherel's theorem, $\mathcal{S}^{s,\phi,\nu}f=T_{M_{s,\phi,\nu}}f \to T_{M_{\phi,\nu}}f$ in $L^2$ and hence there is a sequence $(s_n)_{n=1}^\infty$ converging to $-\infty$ such that  $\lim_{n\to\infty}S^{s_n,\phi,\nu}f \to T_{M_{\phi,\nu}}f$ almost everywhere. Thus Fatou's lemma combined with the above estimate yields 
\begin{equation}\label{innere}
\begin{split}
 \left(1-\left(1-\frac{1}{p}\right)^{p-1}\right)&\int_{\R^d}\frac{((p-1)|T_{M_{\phi,\nu}}f(x)|-|f(x)|)^2}{(|f(x)|+|T_{M_{\phi,\nu}}f(x)|)^{2-p}}
\\&+(p-1)^p||T_{M_{\phi,\nu}}f(x)||_{L^p(\R^d)}^p\leq ||f||_{L^p(\R^d)}^p.
\end{split}
\end{equation}
Now, for a given $\kappa>0$, define a L\'evy measure $\nu_\kappa$ in polar coordinates $(r,\theta)\in (0,\infty)\times \mathbb{S}$ by
$$ \nu_\kappa(\mbox{d}r\mbox{d}\theta)=\kappa^{-2}\delta_\kappa(\mbox{d}r)\mu(d\theta),$$
where $\delta_\kappa$ stands for the Dirac measure on $\{\kappa\}$ and $\mu$ is the measure involved in the definition of the symbol $m$. Next, consider a multiplier $m_\kappa$ as in \eqref{defM}, in which the L\'evy measure is $\nu_\kappa$ and the jump modulator is  given by $1_{\{|x|=\kappa\}}\psi(x/|x|)$ ($\psi$ is the function involved in the definition of $m$). If we let $\kappa\to 0$, we see that
\begin{equation*}
\begin{split}
 \int_{\R^d}[1-\cos\langle \xi,x\rangle]\psi(x/|x|)\nu_\kappa(\mbox{d}x)&=\int_{\mathbb{S}}\psi(\theta)\frac{1-\cos\langle \xi,\kappa\theta\rangle}{\kappa^2}\mu(d\theta)\\
 &\to \frac{1}{2}\int_{\mathbb{S}}\langle \xi,\theta\rangle^2\psi(\theta)\mu(\mbox{d}\theta)
 \end{split}
 \end{equation*}
and similarly
$$ \int_{\R^d}[1-\cos\langle \xi,x\rangle]\psi(x/|x|)\nu_\kappa(\mbox{d}x)\to \frac{1}{2}\int_{\mathbb{S}}\langle \xi,\theta\rangle^2\mu(\mbox{d}\theta),$$
so that $m_\kappa\to m$ pointwise. 
 This observation, combined with \eqref{innere}, yields the estimate
\begin{align*}
 \left(1-\left(1-\frac{1}{p}\right)^{p-1}\right)&\int_{\R^d}\frac{((p-1)|T_mf(x)|-|f(x)|)^2}{(|f(x)|+|T_mf(x)|)^{2-p}}
\\&+(p-1)^p||T_mf(x)||_{L^p(\R^d)}^p\leq ||f||_{L^p(\R^d)}^p,
\end{align*}
by the similar argument as above, using of Plancherel's theorem and the passage to the subsequence which converges almost everywhere. In other words, we have
\begin{align*}
 &\left(1-\left(1-\frac{1}{p}\right)^{p-1}\right)\int_{\R^d}\frac{((p-1)|T_mf(x)|-|f(x)|)^2}{(|f(x)|+|T_mf(x)|)^{2-p}}\\
&\qquad \qquad\qquad \qquad \leq ||f||_{L^p(\R^d)}^p-(p-1)^p||T_mf(x)||_{L^p(\R^d)}^p\\
&\qquad \qquad\qquad \qquad \leq (1-(1-(p-1)\e)^p)||f||_{L^p(\R^d)}^p\\
&\qquad \qquad\qquad \qquad \leq p(p-1)^{p-1}\e ||f||_{L^p(\R^d)}^p.
\end{align*}
Therefore, H\"older's inequality and the estimate \eqref{BanBog} imply
\begin{align*}
&\big|\big|(p-1)|T_mf|-|f|\big|\big|_{L^p(\R^d)}\\
&\leq \left(\int_{\R^d} \frac{((p-1)|T_mf|-|f|)^2}{(|f|+|T_mf|)^{2-p}}\right)^{1/2}|||f|+|T_mf|||_{L^p(\R^d)}^{1-p/2}\\
&\leq \left(\frac{p(p-1)\e}{1-p\left(1-\frac{1}{p}\right)^{p-1}}\right)^{1/2} ||f||_{L^p(\R^d)}^{p/2}\cdot \left(\frac{p}{p-1}||f||_{L^p(\R^d)}\right)^{1-p/2}.
\end{align*}
This is the claim.
\end{proof}

\begin{rem}
The above argumentation can be easily carried over to the vector-valued case. Let us state this more precisely. Suppose that $f=(f_1,f_2,\ldots)\in L^p(\R^d;\mathcal{H})$, i.e., $f$ is a $p$-integrable function on $\R^d$ taking values in $\mathbb{H}$. Let $m=(m_1,m_2,\ldots)$, where for any $j$,  $m_j$ is a symbol from the class \eqref{defm}, with the corresponding parameters $\phi_j$ and $\mu_j$. We define the Fourier multiplier $T_m$, associated with $m$, by the coordinate-wise action: $T_mf=(T_{m_1}f_1,T_{m_2}f_2,\ldots)$. Then the inequalities of Theorem \ref{mainthmf} hold true under this more general setting. Indeed, one fixes $s<0$ and introduces the $\mathbb{H}$-valued martingales $F$ and $G$, as well as the ``intermediate'' operator $\mathcal{S}=(\mathcal{S}^{s,\phi_1,\nu_1},\mathcal{S}^{s,\phi_2,\nu_2},\ldots)$, where each $\nu_j$ is a L\'evy measure on $\R^d$. If one writes \eqref{defS} for each $j$ (and some functions $g_j$) and sums the obtained identities, one gets
$$ \int_{\R^d}\langle \mathcal{S}f(x),g(x)\rangle\mbox{d}x=\int_{\R^d}\E \langle G_0^{x,s,f,\phi},g(x+X_{s,0})\rangle\mbox{d}x.$$
Having done this, one easily shows the vector-valued version of the inequality \eqref{fly} just by replacing products appearing under integrals by inner products of the corresponding vectors. The remainder of the proof is a word-by-word repetition of the arguments used in the scalar case.
\end{rem}

\subsection{Sharpness of 
Theorem \ref{mainthmf}}

\def\diag{\operatorname*{diag}}
We will now construct appropriate functions showing that the order of constants involved in \eqref{mainf<2} and \eqref{mainf>2} is quite tight. 
 Our approach depends heavily on the paper \cite{BSV} by Boros, Sz\'ekelyhidi and Volberg, in which the interplay between martingale transforms and the class of the so-called laminates, important probability measures on matrix spaces (see below) was investigated for the first time. In order to make this section as self-contained as possible, we recall all the basic information on the subject. 

Let  $\R^{m\times n}$ denote  the space of all real matrices of dimension  $m\times n$ and  $\R^{n\times n}_{sym}$ denote  the subclass of $\R^{n\times n}$ consisting  of all real symmetric $n\times n$ matrices.

\begin{dfn}
A function $f:\R^{m\times n} \to \R$ is said to be {\it rank-one convex}, if  for all $A,B \in \R^{m\times n}$ with $\textrm{rank }B= 1$, the function 
$t\mapsto f(A+tB)$ is convex.
\end{dfn}

Let $\mathcal{P}=\mathcal{P}(\R^{m\times n})$ denote the class of all compactly supported probability measures on the space $\R^{m \times n}$.
For $\nu \in \mathcal{P}$, the \emph{center of mass}, or \textit{barycenter} of $\nu$, is given by
$$\overline{\nu} = \int_{\R^{m\times n}}{X d\nu(X)}$$ 

\begin{dfn}
We say that a measure $\nu \in \mathcal{P}$ is a \textit{laminate} (and write $\nu\in\mathcal{L}$), if 
\begin{equation*}
f(\overline{\nu}) \leq \int_{\R^{m\times n}}f \mbox{d}\nu
\end{equation*} 
for all rank-one convex functions $f$. The set of laminates with barycenter $0$ (the zero matrix) is denoted by $\mathcal{L}_0$. 
\end{dfn}
Laminates arise naturally in several applications of convex integration, where they can be used to produce interesting counterexamples;  see  \cite{AFS}, \cite{CFM}, \cite{KMS}, \cite{MS99} and \cite{SzCI}. For our results in this paper we will be interested in the case of $2\times 2$ symmetric matrices. An important observation to make is  that laminates can be regarded as probability measures that record the distribution of the gradients of smooth maps as described by Corollary \ref{coro} below. We briefly explain this and refer the reader to \cite{Kirchheim}, \cite{MS99} and \cite{SzCI} for the full discussion.  

\begin{dfn}
Let $\mathcal{PL}$ denote  the smallest class of probability measures on $\R^{2\times 2}_{sym}$ which 

\begin{itemize}

\item[(i)] contains all measures of the form $\lambda \delta_A+(1-\lambda)\delta_B$ with $\lambda\in [0,1]$ and satisfying $\textrm{rank}(A-B)=1$;

\item[(ii)] is closed under splitting in the following sense: if $\lambda\delta_A+(1-\lambda)\nu$ belongs to $\mathcal{PL}$ for some $\nu\in\mathcal{P}(\R^{2\times 2})$ and $\mu$ also belongs to $\mathcal{PL}$ with $\overline{\mu}=A$, then also $\lambda\mu+(1-\lambda)\nu$ belongs to $\mathcal{PL}$.
\end{itemize}

The class $\mathcal{PL}$ is called the class of \emph{prelaminates}. 
\end{dfn} 

It is clear from the very definition that the class $\mathcal{PL}$ contains only atomic measures. Also, by a successive application of Jensen's inequality, we have the inclusion $\mathcal{PL}\subset\mathcal{L}$. Recall the following two well-known results in the theory of laminates; see \cite{AFS}, \cite{Kirchheim}, \cite{MS99}, \cite{SzCI}.

\begin{lemma}
Let $\nu=\sum_{i=1}^N\lambda_i\delta_{A_i}\in\mathcal{PL}$ with $\overline{\nu}=0$. Moreover, let
$0<r<\tfrac{1}{2}\min|A_i-A_j|$ and $\delta>0$. For any bounded domain $\calB\subset\R^2$ there exists $u\in W^{2,\infty}_0(\calB)$ such that $\|u\|_{C^1}<\delta$ and for all $i=1\dots N$
$$
\bigl|\{x\in\calB:\,|D^2u(x)-A_i|<r\}\bigr|=\lambda_i|\calB|.
$$
\end{lemma}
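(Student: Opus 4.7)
The plan is to proceed by induction on the \emph{complexity} of the prelaminate $\nu$, i.e., the number of elementary two-atom splittings needed to generate it from the defining clauses of $\mathcal{PL}$. The engine of the argument is a one-dimensional building block. Since $A,B\in\R^{2\times 2}_{sym}$ with $\mathrm{rank}(A-B)=1$, one must have $A-B=c\,(n\otimes n)$ for some unit vector $n\in\R^2$ and scalar $c\in\R$ (symmetric rank-one matrices in dimension two all have this form). Pick a $1$-periodic, piecewise quadratic (hence $C^{1,1}$) function $h:\R\to\R$ with $h''=1$ on a subinterval of length $\lambda$ and $h''=0$ on the complement of each period. Then $v(x):=\tfrac{1}{2}x^\top B x + c\,h(\langle x,n\rangle)$ satisfies $D^2v(x)=B+c\,h''(\langle x,n\rangle)(n\otimes n)\in\{A,B\}$ a.e., taking the value $A$ on a union of strips perpendicular to $n$ of density $\lambda$.

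For the base case $\nu=\lambda\delta_A+(1-\lambda)\delta_B$ with $\overline{\nu}=0$, I would use the rescaling $v_\epsilon(x):=\epsilon^2 v(x/\epsilon)$: this preserves the range of $D^2v_\epsilon$ pointwise but shrinks $\|v_\epsilon\|_{L^\infty}$ and $\|\nabla v_\epsilon\|_{L^\infty}$ on a fixed cube to $O(\epsilon^2)$ and $O(\epsilon)$ respectively. Cover $\calB$, up to a set of arbitrarily small measure, by disjoint small parallelograms $Q_j$ aligned with $n$; on each $Q_j$ drop in a translated copy of $v_\epsilon$ and multiply by a smooth cutoff to produce an element of $W_0^{2,\infty}(Q_j)$, extending by zero to obtain $u\in W_0^{2,\infty}(\calB)$. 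The cutoff creates a boundary layer in each $Q_j$ on which $D^2u$ may take arbitrary bounded values, but whose total measure is $O(\epsilon)$. Fine-tuning the strip widths of $h$ per cell (and using, for instance, one designated cell to absorb the measure defect) yields the exact proportions $\lambda|\calB|$ and $(1-\lambda)|\calB|$.

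For the inductive step, the splitting clause of the definition of $\mathcal{PL}$ lets me write $\nu=\lambda\mu+(1-\lambda)\tilde\nu$, where $\lambda\delta_A+(1-\lambda)\tilde\nu\in\mathcal{PL}$ is generated with strictly fewer splittings (with $A=\overline{\mu}$) and $\mu\in\mathcal{PL}$ likewise has strictly fewer splittings. Apply the inductive hypothesis to the outer prelaminate on $\calB$ with a tighter tolerance $r'<r$ and $\delta'<\delta/2$, producing $u_0\in W_0^{2,\infty}(\calB)$ whose Hessian lies within $r'$ of $A$ on a set $E\subset\calB$ with $|E|=\lambda|\calB|$, and within $r'$ of the atoms of $\tilde\nu$ on the complement. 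Decompose $E$ (up to negligible measure) into small subdomains $E_k$; on each $E_k$ apply the inductive hypothesis to the translated prelaminate $\mu-A$, which has barycenter $0$, to obtain $u_{1,k}\in W_0^{2,\infty}(E_k)$ with $C^1$-norm less than $\delta/2$. Setting $u:=u_0+\sum_k \mathbf{1}_{E_k}u_{1,k}$, the Hessian on each $E_k$ equals $D^2u_0+D^2u_{1,k}$, which sits within $r'+\mathrm{diam}(\mathrm{supp}\,\mu)\cdot(\text{error})\leq r$ of the atoms of $\mu$ by the choice of $r'$.

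The main obstacle is the book-keeping: each cutoff step spawns a small bad set on which $D^2u$ fails to lie near any target $A_i$, and these bad sets accumulate through the (at most $N$) levels of the induction. This is handled by a two-scale argument: at every level the cell size $\epsilon$ is chosen small enough relative to $r-r'$ and the remaining measure slack that the defect contributes only $o(1)$ to the next level. Exact attainment of the prescribed proportions $\lambda_i|\calB|$ is achieved by absorbing the accumulated measure defect in a single designated cell at each scale and locally re-tuning the widths of its strips; since $\nu$ has only finitely many atoms and the induction terminates in finitely many steps, all constants combine to yield the stated conclusion.
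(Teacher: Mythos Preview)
The paper does not actually prove this lemma; it quotes it as one of two ``well-known results in the theory of laminates'' and refers to \cite{AFS}, \cite{Kirchheim}, \cite{MS99}, \cite{SzCI} for the proof. Your sketch---induction on the number of splittings defining the prelaminate, with a one-dimensional sawtooth building block at the base and an add-a-correction step on the set where $D^2u_0\approx A$ at the induction---is exactly the scheme carried out in those references, so in spirit you are in agreement with what the paper invokes.

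One detail in your base case is not right as written. With $v(x)=\tfrac12 x^\top Bx+c\,h(\langle x,n\rangle)$ and $h$ periodic, the rescaling $v_\epsilon(x)=\epsilon^2 v(x/\epsilon)$ leaves the quadratic part $\tfrac12 x^\top Bx$ unchanged, so neither $\|v_\epsilon\|_{L^\infty}$ nor $\|\nabla v_\epsilon\|_{L^\infty}$ tends to $0$ on a fixed cube. The rescue is specific to the barycenter-zero hypothesis: from $\lambda A+(1-\lambda)B=0$ and $\mathrm{rank}(A-B)=1$ one gets $B=-\lambda(A-B)$, so $B$ itself is rank one and aligned with $n$; hence the entire $v$ depends only on $\langle x,n\rangle$, i.e.\ $v(x)=\tilde h(\langle x,n\rangle)$ with $\tilde h''\in\{c_A,c_B\}$ and $\lambda c_A+(1-\lambda)c_B=0$. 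One may then choose $\tilde h',\tilde h$ genuinely periodic (mean of $\tilde h''$ over a period is zero), and now the rescaling does give $\|v_\epsilon\|_{C^1}=O(\epsilon)$. With that correction your argument goes through; the remaining points you flag (open structure of the set $E$, accumulation of boundary-layer error, exact tuning of proportions) are handled in the cited sources precisely by the cell-by-cell packing and the finite depth of the induction, as you indicate.
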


\begin{lemma}
Let $K\subset\R^{2\times 2}_{sym}$ be a compact convex set and $\nu\in\mathcal{L}$ with $\operatorname*{supp}\nu\subset K$. Then there exists a sequence $\nu_j$ of prelaminates with $\overline{\nu}_j=\overline{\nu}$ and $\nu_j\overset{*}{\rightharpoonup}\nu$, where $\overset{*}{\rightharpoonup}$ denotes weak convergence of measures. 
\end{lemma}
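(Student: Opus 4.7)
My approach is a Hahn--Banach separation argument combined with the identification of the infimum of $\int f\, d\mu$ over prelaminates with prescribed barycenter as the rank-one convex envelope of $f$. First I would fix a compact convex set $K' \supset K$ large enough to enclose the supports of all relevant prelaminates, and work in the space $\mathcal{M}(K')$ of finite signed Borel measures on $K'$ with its weak-$*$ topology (predual $C(K')$); on probability measures this coincides with the mode of convergence $\overset{*}{\rightharpoonup}$ in the statement. The subset $\calA \subset \mathcal{M}(K')$ of probability measures with barycenter $\overline{\nu}$ is weak-$*$ compact and convex, so the weak-$*$ closure $\calC \subset \calA$ of the prelaminates with this fixed barycenter is a closed convex subset of $\calA$.

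Then I would argue by contradiction: suppose $\nu \notin \calC$. Hahn--Banach produces $f \in C(K')$ and $c \in \R$ such that
$$\int f\, d\nu < c \leq \int f\, d\mu \qquad \text{for every } \mu \in \calC,$$
so in particular $c \leq \inf\bigl\{\int f\, d\mu : \mu \in \mathcal{PL},\ \overline{\mu} = \overline{\nu}\bigr\}$. The heart of the proof is the identity
$$\inf \Bigl\{\int f\, d\mu : \mu \in \mathcal{PL},\ \overline{\mu} = A\Bigr\} = f^{\mathrm{rc}}(A),$$
where $f^{\mathrm{rc}}$ denotes the rank-one convex envelope of $f$ (the largest rank-one convex function pointwise below $f$); one would also use the fact that $f^{\mathrm{rc}}$ itself is rank-one convex. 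One inequality follows by a direct induction on the splitting depth defining $\mathcal{PL}$, using axiom (ii); the reverse requires checking that $A \mapsto \inf\{\int f\, d\mu : \mu \in \mathcal{PL},\ \overline{\mu} = A\}$ is itself rank-one convex, and thus lies below $f^{\mathrm{rc}}$. Both facts are standard in the calculus of variations literature; see \cite{Kirchheim}, \cite{MS99}.

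Once the envelope identification is in hand, the defining property of the laminate $\nu$ applied to the rank-one convex function $f^{\mathrm{rc}}$ yields
$$c \leq f^{\mathrm{rc}}(\overline{\nu}) \leq \int f^{\mathrm{rc}}\, d\nu \leq \int f\, d\nu,$$
the first inequality from the envelope identification, the second because $\nu$ is a laminate and $f^{\mathrm{rc}}$ is rank-one convex, the third because $f^{\mathrm{rc}} \leq f$ pointwise. This contradicts the strict inequality produced by Hahn--Banach, so $\nu \in \calC$, which is precisely the existence of a sequence of prelaminates $\nu_j$ with $\overline{\nu}_j = \overline{\nu}$ and $\nu_j \overset{*}{\rightharpoonup} \nu$.

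The hard part will be the envelope identification, and in particular the rank-one convexity of the infimum function; this is handled by a careful use of the splitting rule (ii) along rank-one line segments in $\R^{2\times 2}_{\mathrm{sym}}$. The tightness issue---ensuring that the approximating prelaminates all sit in a common compact set $K'$---is a secondary technical point that can be resolved by truncating the splitting construction, noting that the supports of prelaminates needed to approximate a laminate on $K$ can be chosen inside the lamination-convex hull of a bounded enlargement of $K$.
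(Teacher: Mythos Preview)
The paper does not prove this lemma at all; it is stated as one of two ``well-known results in the theory of laminates'' and simply attributed to the references \cite{AFS}, \cite{Kirchheim}, \cite{MS99}, \cite{SzCI}. Your Hahn--Banach separation argument, combined with the identification of $\inf\{\int f\,d\mu:\mu\in\mathcal{PL},\ \overline{\mu}=A\}$ with the rank-one convex envelope $f^{\mathrm{rc}}(A)$, is exactly the standard proof found in those sources, so your proposal is correct and matches the argument the paper is implicitly invoking.

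One small remark: your claim that the weak-$*$ closure $\calC$ of prelaminates with barycenter $\overline{\nu}$ is convex deserves a word of justification, since it is not immediate from the definition of $\mathcal{PL}$. It follows because the set of prelaminates with a fixed barycenter $A$ is already convex: given $\mu_1,\mu_2\in\mathcal{PL}$ with $\overline{\mu}_1=\overline{\mu}_2=A$, start from $\delta_A=\lambda\delta_A+(1-\lambda)\delta_A\in\mathcal{PL}$, apply the splitting rule (ii) once to replace the first $\delta_A$ by $\mu_1$, and once more to replace the remaining $\delta_A$ by $\mu_2$, yielding $\lambda\mu_1+(1-\lambda)\mu_2\in\mathcal{PL}$. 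With that in hand, your separation argument goes through, and the tightness issue you flag is indeed a routine technicality handled in the cited references (one may take $K'$ to be any compact convex set containing $K$ in its interior and work with the rank-one convex envelope relative to $K'$).
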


Combining these two lemmas and using a simple mollification, we obtain the following statement, proved by Boros, Sh\'ekelyhidi Jr. and Volberg \cite{BSV}. It exhibits the connection  between laminates supported on symmetric matrices and second derivatives of functions.  This fact will play a crucial role in our argumentation below. As in the introduction, the symbol $\mathbb{D}$ stands for the unit disc in the complex plane  $\mathbb{C}$.

\begin{corollary}\label{coro}
Let $\nu\in\mathcal{L}_0$. Then there exists a sequence $u_j\in C_0^{\infty}(\mathbb{D})$ with uniformly bounded second derivatives, such that
\begin{equation}\label{gby}
\frac{1}{|\mathbb{D}|}\int_{\mathbb{D}} \phi(D^2u_j(x))\,\mbox{d}x\,\to\,\int_{\R^{2\times 2}_{sym}}\phi\,\mbox{d}\nu
\end{equation}
for all continuous $\phi:\R^{2\times 2}_{sym}\to\R$. 
\end{corollary}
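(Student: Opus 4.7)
The plan is to deduce the corollary by combining the two preceding lemmas (the ``staircase'' construction for prelaminates and the weak-$*$ approximation of laminates by prelaminates) with a standard mollification, tiling, and diagonal argument. Throughout, the key observation is that the Hessian $D^2 u$ of a $C^2$ function takes values in $\R^{2\times 2}_{sym}$, so laminates supported on symmetric matrices are the natural objects to match.

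First I would fix $\nu\in\mathcal{L}_0$ and, using the second lemma, pick a sequence of prelaminates $\nu_j\in\mathcal{PL}$ with $\overline{\nu}_j=0$ and $\nu_j\overset{*}{\rightharpoonup}\nu$. Since $\nu$ is compactly supported, and since the second lemma is applied to a compact convex set containing $\operatorname{supp}\nu$, I may assume the supports of all $\nu_j$ lie in a common compact convex set $K\subset\R^{2\times 2}_{sym}$. Write $\nu_j=\sum_{i=1}^{N_j}\lambda_i^{(j)}\delta_{A_i^{(j)}}$ and fix small parameters $r_j,\delta_j\downarrow 0$ with $r_j<\tfrac12\min_{i\neq k}|A_i^{(j)}-A_k^{(j)}|$.

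Next, for each $j$ I would apply the first lemma with the bounded domain $\calB=\mathbb{D}$, producing $v_j\in W^{2,\infty}_0(\mathbb{D})$ with $\|v_j\|_{C^1}<\delta_j$ and
\begin{equation*}
\bigl|\{x\in\mathbb{D}:|D^2v_j(x)-A_i^{(j)}|<r_j\}\bigr|=\lambda_i^{(j)}|\mathbb{D}|,\qquad i=1,\dots,N_j.
\end{equation*}
Because the Hessians are uniformly bounded (they lie in a small neighborhood of the compact set $\operatorname{supp}\nu_j\subset K$), for any continuous $\phi$ and any modulus of continuity of $\phi$ on a neighborhood of $K$, the averaged integral $|\mathbb{D}|^{-1}\int_{\mathbb{D}}\phi(D^2v_j)\,dx$ differs from $\int\phi\,d\nu_j$ by at most $\omega_\phi(r_j)$. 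Since $\nu_j\overset{*}{\rightharpoonup}\nu$ and $\phi$ is continuous on the compact set $K$, $\int\phi\,d\nu_j\to\int\phi\,d\nu$, so already the sequence $v_j$ almost does the job, except for the regularity: $v_j$ is only $W^{2,\infty}$.

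To upgrade to $C_0^\infty$, I would mollify. Let $\rho_\varepsilon$ be a standard mollifier supported in a ball of radius $\varepsilon$, and set $u_{j,\varepsilon}=\rho_\varepsilon*v_j$, extended by zero outside $\mathbb{D}$ (then multiplied by a smooth cutoff supported in $\mathbb{D}$ and equal to $1$ outside an $\varepsilon$-neighborhood of $\partial\mathbb{D}$, to ensure compact support while affecting only a set of measure $O(\varepsilon)$). Since $D^2 v_j\in L^\infty(\mathbb{D})$, $D^2 u_{j,\varepsilon}\to D^2 v_j$ almost everywhere as $\varepsilon\downarrow 0$, with a uniform $L^\infty$ bound, so dominated convergence gives
\begin{equation*}
\frac{1}{|\mathbb{D}|}\int_{\mathbb{D}}\phi(D^2 u_{j,\varepsilon})\,dx\xrightarrow{\varepsilon\downarrow 0}\frac{1}{|\mathbb{D}|}\int_{\mathbb{D}}\phi(D^2 v_j)\,dx.
\end{equation*}
A diagonal extraction $u_j:=u_{j,\varepsilon_j}$ with $\varepsilon_j$ sufficiently small then yields the desired sequence in $C_0^\infty(\mathbb{D})$. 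The uniform bound on the second derivatives follows from the fact that $\|D^2 u_{j,\varepsilon}\|_{L^\infty}\le \|D^2 v_j\|_{L^\infty}$ together with the uniform containment of $\operatorname{supp}\nu_j$ in the compact set $K$.

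The main obstacle I anticipate is the technical bookkeeping around the boundary cutoff: one must check that the cutoff used to guarantee $u_j\in C_0^\infty(\mathbb{D})$ contributes only a small-measure error to the average $|\mathbb{D}|^{-1}\int_{\mathbb{D}}\phi(D^2u_j)\,dx$. This is handled by shrinking $\varepsilon$ fast enough so that the region where the cutoff is nontrivial has measure $o(1)$ and the Hessian of $u_j$ there is still bounded (hence $\phi(D^2 u_j)$ is bounded there). The convergence (\ref{gby}) is then obtained by testing against an arbitrary continuous $\phi$, using that all Hessians in sight lie in a fixed compact set on which uniform continuity of $\phi$ is available, so the three approximation errors (mollification, discretization by $\nu_j$, weak-$*$ convergence $\nu_j\to\nu$) can each be made arbitrarily small.
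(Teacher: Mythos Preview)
Your proposal is correct and follows exactly the approach the paper indicates: the paper does not give a detailed proof of this corollary but simply states that it is obtained by ``combining these two lemmas and using a simple mollification,'' citing Boros, Sz\'ekelyhidi Jr.\ and Volberg \cite{BSV}. Your argument carries out precisely this program---weak-$*$ approximation by prelaminates, the $W^{2,\infty}_0$ construction on $\mathbb{D}$, mollification with a boundary cutoff, and a diagonal extraction---so there is nothing to add.
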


We are ready to provide lower bounds for the constants and exponents involved in the estimates \eqref{mainf<2} and \eqref{mainf>2}, and prove that there is no stability result in the case $p=2$. We will focus on the case $1<p<2$.  For the remaining cases the reasoning is essentially the same and we leave it to the reader. 

Fix $1<p<2$, a small $\e>0$, a large $K>0$ and set $\eta=(p-1)\sqrt{\e/(2-p)}/2$.  We may assume that \eqref{ineq} holds by decreasing $\e$ if necessary. Let $(F,G)$ be the (finite) martingale pair studied in  \S\ref{Sec<2}. 

\begin{lemma}
The distribution of $\diag(G_\infty-F_\infty,F_\infty+G_\infty)\in \R^{2\times 2}_{sym}$ is a prelaminate with barycenter $0$.
\end{lemma}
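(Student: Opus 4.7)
The plan is to exploit the linear map $T:\R^2\to\R^{2\times 2}_{sym}$ defined by $T(f,g)=\diag(g-f,\,f+g)$, and use the fact that $G$ is the $\{(-1)^n\}$-transform of $F$ to convert every martingale split into a rank-one split in matrix space. The key algebraic observation is that
$$T(1,1)=\diag(0,2),\qquad T(1,-1)=\diag(-2,0),$$
both of which are rank-one matrices in $\R^{2\times 2}_{sym}$. Since at step $n$ every martingale increment $(F_n-F_{n-1},G_n-G_{n-1})$ is parallel to $(1,(-1)^n)$, the image under $T$ of any two conditional outcomes at step $n$ differs by a rank-one matrix.

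I would then induct on the step number. Let $\mu_n$ denote the distribution of $T(F_n,G_n)$, so that $\mu_0=\delta_0$, which is trivially a prelaminate with barycenter $0$. Assuming $\mu_{n-1}\in\mathcal{PL}$, I consider each non-absorbing atom $\delta_A$ corresponding to a state $(f,g)$ at time $n-1$. The process splits $(f,g)$ into two points $(f^\pm,g^\pm)$ with probabilities $q^\pm$ summing to one and satisfying the martingale identity $q^+(f^+,g^+)+q^-(f^-,g^-)=(f,g)$. By the transform property, $(f^\pm-f,g^\pm-g)$ are both parallel to $(1,(-1)^n)$, so $T(f^+,g^+)-T(f^-,g^-)$ is a rank-one matrix and $T(f,g)=q^+T(f^+,g^+)+q^-T(f^-,g^-)$. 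Hence $q^+\delta_{T(f^+,g^+)}+q^-\delta_{T(f^-,g^-)}$ is a prelaminate with barycenter $T(f,g)=A$, and axiom (ii) in the definition of $\mathcal{PL}$ allows this to be spliced into $\mu_{n-1}$, producing $\mu_n\in\mathcal{PL}$. Because the martingale $(F,G)$ terminates in finitely many steps, iterating this procedure yields $\mu_\infty\in\mathcal{PL}$.

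Finally, the barycenter equals $0$ at every step by the martingale property applied to $T(F_n,G_n)$ (or directly, since $\overline{\mu_n}=T(\mathbb{E}F_n,\mathbb{E}G_n)=T(0,0)=0$). The only technical point requiring care is the second stage of the construction in \S\ref{example}, where the process ``waits'' one time unit before splitting at terminal points on the lines $y=\pm(p-1)x$; the waiting step leaves the distribution unchanged, and the ensuing split — precisely because of the wait — is again a $(-1)^n$-transform-type move, hence still rank-one under $T$. I expect this bookkeeping with parities to be the only mildly subtle obstacle, but it is exactly what the waiting convention was designed to guarantee.
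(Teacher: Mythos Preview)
Your proposal is correct and follows essentially the same approach as the paper: both arguments rest on the observation that, because $G$ is the $\{(-1)^n\}$-transform of $F$, at each step one of the two coordinates of $(G-F,F+G)$ is held fixed, so the corresponding diagonal matrices differ by a rank-one matrix and the prelaminate splitting axiom applies inductively. Your version is somewhat more explicit (introducing the linear map $T$, spelling out the induction, and addressing the waiting step in the second stage), but the underlying idea is identical to the paper's terser ``zigzag'' formulation.
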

\begin{proof}
By the construction, the martingale $F$ is a transform of $G$ by the deterministic sequence $\{(-1)^n\}_{n\geq 0}$. Consequently, the pair $(G-F,F+G)$ has the following zigzag property: for any $n$, it moves either vertically or horizontally. More precisely, depending on the parity of $n$, we have $G_{n+1}-F_{n+1}=G_n-F_n$ with probability $1$ or $F_{n+1}+G_{n+1}=F_n+G_n$ with probability $1$. This implies the desired prelaminate property: when comparing the distributions of $(G_{n}-F_{n},F_n+G_n)$ and $(G_{n+1}-F_{n+1},F_{n+1}+G_{n+1})$ we see exactly the splitting as in the second condition defining the class of prelaminates.
\end{proof}

Next, consider the continuous functions $\phi_1,\,\phi_2:\R^{2\times 2}_{sym}\to \R$ given by
$$ \phi_1(A)=|A_{11}-A_{22}|^p-\left(\frac{1}{p-1}-\e\right)^p|A_{11}+A_{22}|^p$$
and
\begin{eqnarray*}
\phi_2(A)&=&\big||A_{11}-A_{22}|-(p-1)^{-1}|A_{11}+A_{22}|\big|^p\\
&-&\left(\frac{p}{4(p-1)}\sqrt{\frac{\e}{2-p}}\right)^p|A_{11}+A_{22}|^p.
\end{eqnarray*}
By Corollary \ref{coro}, there is a sequence  $u_j\in C_0^{\infty}(\mathbb{D})$  such that
$$ \frac{1}{|\mathbb{D}|}\int_{\mathbb{D}} \phi_i(D^2u_j(x))\,\mbox{d}x\,\to\,\int_{\R^{2\times 2}_{sym}}\phi_i\,\mbox{d}\nu,\qquad i=1,\,2.
$$
If we set $f_j=\Delta u_j$, this equivalent to the statement that 
\begin{align*}
 &\frac{1}{|\mathbb{D}|}\int_{\mathbb{D}} \left(|\Re B f_j|^p-\left(\frac{1}{p-1}-\e\right)^p|f_j|^p\right)\mbox{d}x\\
&\qquad \qquad \qquad \to\,2^p\left[||F_\infty||_p^p-\left(\frac{1}{p-1}-\e\right)^p||G_\infty||^p_p \right]>0
\end{align*}
and
\begin{align*}
 &\frac{1}{|\mathbb{D}|}\int_{\mathbb{D}} \left(\big||\Re B f_j|-(p-1)^{-1}|f_j|\big|^p-\left(\frac{p}{4(p-1)}\sqrt{\frac{\e}{2-p}}\right)^p|f_j|^p\right)\mbox{d}x\\
&\qquad \qquad \to\,2^p\left[\big|\big||F_\infty|-(p-1)^{-1}|G_\infty|\big|\big|_p^p-\left(\frac{p}{4(p-1)}\sqrt{\frac{\e}{2-p}}\right)^p||G_\infty||_p^p\right]>0.
\end{align*}
Therefore, for sufficiently large $j$ we have
$$ ||\Re Bf_j||_{L^p(\mathbb{C})}\geq \left(\int_{\mathbb{D}}|\Re Bf_j|^p\mbox{d}x\right)^{1/p}\geq \left(\frac{1}{p-1}-\e\right)||f_j||_{L^p(\mathbb{C})}$$
and, simultaneously,
$$ \big|\big||\Re B f_j|-(p-1)^{-1}|f_j|\big|\big|_{L^p(\mathbb{C})}\geq \left(\frac{p}{4(p-1)}\sqrt{\frac{\e}{2-p}}\right)||f_j||_{L^p(\mathbb{C})}.$$
This is precisely the desired bound.

\subsection{First order Riesz transforms, inequalities \eqref{mainR<2} and \eqref{mainR>2} in Theorem \ref{mainthmR}}
The reasoning is similar to that above, so we will be brief; we will mostly focus on the case $p<2$, for other values of $p$ we proceed analogously. 
Our argumentation rests on the well-known representation of Riesz transforms in terms of the so-called background radiation process, introduced by Gundy and Varopoulos in \cite{GV}. Let us briefly describe this connection. Throughout this section, $d$ is a fixed positive integer. Suppose that $X$ is a Brownian motion in $\R^d$ and let $Y$ be an independent Brownian motion in $\R$ (both processes start from the appropriate origins). For any $y>0$, introduce the stopping time $\tau(y)=\inf\{t\geq 0: Y_t\in \{-y\}\}$. If $f$ belongs to $\mathcal{S}(\R^d)$, the class of rapidly decreasing functions on $\R^d$, let $Z_f:\R^d\times [0,\infty)\to \R$ 
stand for the Poisson extension of $f$ to the upper half-space. That is,
$$Z_f(x,y):=\E f\left(x+X_{\tau(y)}\right).$$
For any $(d+1)\times (d+1)$ matrix $A$ we define the martingale transform $A\!*\!f$ by
$$ A\!*\! f(x,y)=\int_{0+}^{\tau(y)} A\nabla Z_f(x+X_s,y+Y_s)\cdot \mbox{d}(X_s,Y_s).$$
Note that $A*f(x,y)$ is a random variable for each $x,\,y$. 
Now, for any $f\in C_0^\infty$, any $y>0$ and any matrix $A$ as above, define $\mathcal{T}_A^yf:\R^d\to \R$ through the bilinear form
\begin{equation}\label{defT}
\int_{\R^d} \mathcal{T}_A^yf(x)g(x)\,\mbox{d}x=\int_{\R^d} \E \big[A\!*\!f(x,y)g(x+X_{\tau(y)})\big]\mbox{d}x,
\end{equation}
where $g$ runs over $C_0^\infty(\R^d)$. The interplay between the operators $\mathcal{T}_A^y$ and Riesz transforms is explained in the following theorem, consult \cite{GV} or Gundy and Silverstein \cite{GS}.

\begin{theorem}\label{RieszGV}
Let $A^j=[a^j_{\ell m}]$, $j=1,\,2,\,\ldots,\,d$ be the $(d+1)\times (d+1)$ matrices given by
$$ a^j_{\ell m}=\left\{\begin{array}{ll}
1 & \mbox{if }\ell=d+1,\,m=j,\\
-1 & \mbox{if }\ell=j,\,m=d+1,\\
0 & \mbox{otherwise}.
\end{array}\right.$$
Then $\mathcal{T}_{A^j}^yf \to R_jf$ almost everywhere as $y\to \infty$.
\end{theorem}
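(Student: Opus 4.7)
The plan is to identify $\mathcal{T}^y_{A^j} f$ as an explicit Fourier multiplier applied to $f$, whose symbol converges to $-i\xi_j/|\xi|$ as $y\to\infty$, and then deduce the almost-everywhere convergence to $R_j f$ from a direct identity.

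First, I would fix $g \in C_0^\infty(\R^d)$ and denote by $Z_g(x,y) = \E g(x + X_{\tau(y)})$ the Poisson extension of $g$. Since $Z_g$ is harmonic in the upper half-space, It\^o's formula identifies $s \mapsto Z_g(x + X_s, y + Y_s)$, $s \in [0, \tau(y)]$, as a martingale with differential $\nabla Z_g \cdot d(X_s, Y_s)$ and terminal value $g(x + X_{\tau(y)})$. As $A^j\!*\!f(x,y)$ is by construction the stochastic integral of $A^j \nabla Z_f$ against the same driving $(X,Y)$, It\^o's isometry yields
\begin{align*}
\E\bigl[A^j\!*\!f(x,y)\, g(x + X_{\tau(y)})\bigr] = \E \int_0^{\tau(y)} (A^j \nabla Z_f) \cdot \nabla Z_g\, (x + X_s, y + Y_s)\, ds.
\end{align*}
For the specific matrix $A^j$ in the statement, the integrand reduces to the antisymmetric combination $\partial_j Z_f\, \partial_y Z_g - \partial_y Z_f\, \partial_j Z_g$.

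Next, I would integrate this identity in $x \in \R^d$, apply Fubini and a translation by $X_s$, and use Plancherel together with the Fourier representation of the Poisson semigroup, which expresses $Z_h(\cdot, y)$ as the Fourier multiplier $e^{-cy|\xi|}\hat h(\xi)$ (with $c$ a harmless positive constant depending on convention). The two off-diagonal terms contribute equally and combine into a single factor proportional to $i \xi_j |\xi|\, e^{-2c(y + Y_s)|\xi|}$, and the outer $\E \int_0^{\tau(y)} ds$ collapses the Poisson weight via a one-dimensional Feynman--Kac computation for the motion $Y$ started at $0$ and killed at $-y$. A short explicit calculation then gives
\begin{align*}
\int_{\R^d} \mathcal{T}^y_{A^j} f(x)\, g(x)\, dx = \int_{\R^d} m_y(\xi) \hat f(\xi) \overline{\hat g(\xi)}\, d\xi,\qquad m_y(\xi) = -i\frac{\xi_j}{|\xi|}\bigl(1 - e^{-2cy|\xi|}\bigr),
\end{align*}
which identifies $\mathcal{T}^y_{A^j} f = R_j f - P_{2y}(R_j f)$, where $P_{2y}$ denotes the Poisson extension at height $2y$.

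Finally, the almost-everywhere convergence as $y \to \infty$ reduces to the classical fact that for any $h \in L^p(\R^d)$ with $1 \leq p < \infty$ one has $P_y h(x) \to 0$ for almost every $x$ as $y \to \infty$; applied to $h = R_j f$, this yields $\mathcal{T}^y_{A^j} f \to R_j f$ a.e., which is the claim. I expect the main obstacle to be the Fubini/Plancherel interchange and the Feynman--Kac step: both are standard, but care is needed to ensure absolute convergence of the iterated integrals, which is guaranteed by the rapid decay of Schwartz functions and of their Poisson extensions together with the exponential weight $e^{-2cy|\xi|}$.
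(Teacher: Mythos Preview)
Your argument is correct and is essentially the classical Gundy--Varopoulos computation. Note, however, that the paper does not supply its own proof of this theorem: it is quoted as a known result with references to \cite{GV} and \cite{GS}, and the sketch you give is precisely the standard derivation found there (identify the bilinear form via It\^o's isometry, compute the symbol by Plancherel and the Green's function of the half-line Brownian motion, then let the Poisson remainder vanish as $y\to\infty$).
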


Here is the analogue of Lemma \ref{maincon}, exploiting the functions $V_p$ introduced in the proof of Theorem \ref{mainthmo}. The argument goes along the same lines, so we will not repeat it here.

\begin{lemma}
Let $p\in (1,2)\cup (2,\infty)$ and let $A$ be the matrix from Theorem \ref{RieszGV} corresponding to Riesz transform $R_j$. Then for any real-valued function $f\in C_0^\infty(\R^d)$ we have the estimate
\begin{equation}\label{fly2}
 \int_{\R^d} V_p(f(x),\mathcal{T}_Af(x))\mbox{d}x\leq 0.
\end{equation}
\end{lemma}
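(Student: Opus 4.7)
The approach will parallel the proof of Lemma \ref{maincon} \emph{mutatis mutandis}, with the Gundy--Varopoulos representation \eqref{defT} playing the role of the L\'evy representation \eqref{defS}, and Theorem \ref{Wang2} (the orthogonal variant) substituted in place of Theorem \ref{Wang}. The plan is to fix $y>0$ and first prove $\int_{\R^d}V_p(f(x),\mathcal{T}_A^y f(x))\,dx\leq 0$; the conclusion for the Riesz transform itself will then follow from Theorem \ref{RieszGV} together with Fatou's lemma applied along an almost-everywhere convergent subsequence.

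For the setup I would take
\[ g(x):=(V_p)_y\bigl(f(x),\mathcal{T}_A^y f(x)\bigr). \]
The explicit formulas for $V_p$ in both ranges of $p$ give the growth estimate $|(V_p)_y(u,v)|\leq C_p(|u|+|v|)^{p-1}$, so the $L^p$-boundedness of $\mathcal{T}_A^y$ (inherited from the Riesz transforms via Theorem \ref{RieszGV}) ensures $g\in L^{p/(p-1)}(\R^d)$. Extending \eqref{defT} to this $g$ by density and then combining the translation invariance of Lebesgue measure with Fubini's theorem exactly as in the proof of Lemma \ref{maincon}, I obtain the zero-mean identity
\[ \E\int_{\R^d}(V_p)_y\bigl(F^{x,y,f}_{\tau(y)},\mathcal{T}_A^y f(x+X_{\tau(y)})\bigr)\bigl[G^{x,y,f}_{\tau(y)}-\mathcal{T}_A^y f(x+X_{\tau(y)})\bigr]\,dx=0. \]
Combining this with the convexity of $V_p(u,\cdot)$ (established in Subsections \ref{p<2o} and \ref{p>2o}) yields the key pointwise-in-$x$ bound
\[ \int_{\R^d}V_p\bigl(f(x),\mathcal{T}_A^y f(x)\bigr)\,dx\leq\int_{\R^d}\E\,V_p\bigl(F^{x,y,f}_{\tau(y)},G^{x,y,f}_{\tau(y)}\bigr)\,dx. \]

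It then remains to verify that the integrand on the right is nonpositive. The matrix $A$ in Theorem \ref{RieszGV} is antisymmetric and satisfies $|Av|^2\leq|v|^2$ pointwise, so $(F^{x,y,f},G^{x,y,f})$ is an orthogonal martingale pair with $G^{x,y,f}$ differentially subordinate to $F^{x,y,f}$. Theorem \ref{Wang2} then produces a localising sequence $(\tau_n)$ of stopping times along which
\[ \E\,V_p\bigl(F^{x,y,f}_{\tau_n\wedge t},G^{x,y,f}_{\tau_n\wedge t}\bigr)\leq V_p(Z_f(x,y),0), \]
and direct inspection of the two defining formulas (taking polar coordinates with $\theta=0$) shows $V_p(u,0)\leq 0$ in both ranges of $p$. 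The bound $|V_p(u,v)|\leq C_p(|u|^p+|v|^p)$, combined with Doob's maximal inequality and \eqref{BWin}, furnishes the integrable majorant needed to let $n,t\to\infty$ by dominated convergence.

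The main obstacle is not analytic content but careful bookkeeping: one must keep the three parameters $(x,y,t)$ disentangled and verify that the orthogonality together with the differential subordination of $(F^{x,y,f},G^{x,y,f})$ are genuinely preserved by the It\^o-integral representation of $A\ast f$ built into \eqref{defT}. Modulo this, the argument is a faithful transcription of Lemma \ref{maincon}, with the orthogonal Bellman-function apparatus of Theorem \ref{Wang2} inserted in place of its non-orthogonal counterpart at the final step.
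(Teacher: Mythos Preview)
Your proposal is correct and follows exactly the approach the paper intends: it explicitly states that ``the argument goes along the same lines'' as Lemma~\ref{maincon} and omits the details, and your transcription---replacing $U_p$ by $V_p$, the L\'evy representation \eqref{defS} by the Gundy--Varopoulos identity \eqref{defT}, and Theorem~\ref{Wang} by the orthogonal version Theorem~\ref{Wang2}---is precisely that argument. The only minor point is that the lemma as stated concerns $\mathcal{T}_A^y f$ for fixed $y$, so the passage to $R_jf$ via Theorem~\ref{RieszGV} and Fatou's lemma is carried out separately in the proof of \eqref{mainR<2} rather than inside this lemma.
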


\begin{proof}[Proof of \eqref{mainR<2}] 
By \eqref{fly2} and the majorization \eqref{majo<2}, we get
\begin{align*}
 \kappa_p \int_{\R^d}\left(|\mathcal{T}_Af(x)|-\tan\frac{\pi}{2p}|f(x)|\right)^2(|f(x)|+|\mathcal{T}_Af(x)|)^{2-p}\mbox{d}x&+||\mathcal{T}_Af||^p_{L^p(\R^d)}\\
&\leq \tan^p\frac{\pi}{2p}||f||_{L^p(\R^d)}^p.
\end{align*}
Now we exploit Theorem \ref{RieszGV}: if we let the parameter $y$ go to $\infty$, then Fatou's lemma implies
\begin{align*}
 \kappa_p \int_{\R^d}\left(|R_jf(x)|-\tan\frac{\pi}{2p}|f(x)|\right)^2(|f(x)|+|R_jf(x)|)^{2-p}\mbox{d}x&+||R_jf||^p_{L^p(\R^d)}\\
&\leq \tan^p\frac{\pi}{2p}||f||_{L^p(\R^d)}^p.
\end{align*}
Since $||R_jf||_{L^p(\R^d)}\geq (\tan^p\frac{\pi}{2p}-\e)||f||_{L^p(\R^d)}$, we obtain
\begin{align*}
 \kappa_p \int_{\R^d}\left(|R_jf(x)|-\tan\frac{\pi}{2p}|f(x)|\right)^2&(|f(x)|+|R_jf(x)|)^{2-p}\mbox{d}x\\
&\leq p\tan^{p-1}\frac{\pi}{2p}\e||f||_{L^p(\R^d)},
 \end{align*}
which combined with H\"older inequality and \eqref{inn} yields
\begin{align*}
\left|\left||R_jf|-\tan\frac{\pi}{2p}|f|\right|\right|_{L^p(\R^d)}&\leq \left(\E \frac{(|R_jf|-\tan\frac{\pi}{2p}|f|)^2}{(|f|+|R_jf|)^{2-p}}\right)^{1/2}|||f|+|R_jf|||_{L^p(\R^d)}^{1-p/2}\\
&\leq \left(\frac{p\tan^{p-1}\frac{\pi}{2p}\e}{\kappa_p}\right)^{1/2}  \left(1+\tan\frac{\pi}{2p}\right)^{1-p/2}||f||_{L^p(\R^d)}\\
&= \left(-\frac{8p\tan\frac{\pi}{2p}\e}{(p-1)\cos\frac{\pi}{p}}\right)^{1/2}  \left(1+\tan\frac{\pi}{2p}\right)^{1-p/2}||f||_{L^p(\R^d)}.
\end{align*}
Now it suffices to apply the same bounds as at the end of the proof of \eqref{maino<2} to get the claim.
\end{proof}

 \subsection{Shapness of Theorem \ref{mainthmR}}
\begin{proof}[Sharpness of \eqref{mainR<2}, $d=1$]
Fix $1<p<2$ and $\e>0$. We have constructed above a pair $(X,Y)$ of orthogonal martingales such that $||Y||_p> (\tan\frac{\pi}{2p}-\e)||X||_p$ and $|||Y_\infty|-\tan\frac{\pi}{2p}|X_\infty||_p\geq a_p\left(\frac{\e}{2-p}\right)^{1/2}||X||_p$ for some constant $a_p$ bounded in a neighborhood of $2$. Actually, the pair $(X,Y)$ was the planar Brownian motion started at the origin, killed upon leaving the boundary of a certain angle $D$. Let $F:\mathbb{D}\to D$ be the conformal map which sends the unit disc of the complex plane onto that angle, such that $F(0)=0$. Then the distribution, with respect to the Haar measure, of $F=\Re{F}+i\Im F=\Re{F}+i\mathcal{H}^\mathbb{T}\Re{F}$ on the unit circle $\mathbb{T}$ coincides with the distribution of the pair $(X,Y)$; therefore, we have
$$ ||\mathcal{H}^\mathbb{T}\Re{F}||_{L^p(\mathbb{T})}> \left(\tan\frac{\pi}{2p}-\e\right)||f||_{L^p(\mathbb{T})}$$
and, at the same time,
$$\left|\left||\mathcal{H}^\mathbb{T}\Re{F}|-\tan\frac{\pi}{2p}|\Re{F}|\right|\right|_{L^p(\mathbb{T})}\geq a_p\left(\frac{\e}{2-p}\right)^{1/2}||\Re F||_{L^p(\mathbb{T})}.$$
To pass from the periodic Hilbert transform $\mathcal{H}^\mathbb{T}$ to its non-periodic counterpart, we exploit well-known argument going back to Davis' work \cite{Da}. Let $\textrm{H}$ denote the upper half-plane and let $G:D\cap \textrm{H}\to \textrm{H}$ be defined by $G(z)=-(1-z)^2/(4z)$. Then $G$ is conformal and hence so is its inverse $L$. We extend $L$ to the continuous function on $\overline{\textrm{H}}=\{z\in \mathbb{C}:\mbox{Im}z\geq 0\}$. Then $L$ maps $[0,1]$ onto $\{e^{i\theta}:0\leq \theta\leq \pi\}$; specifically, for $x\in [0,1]$ we have
\begin{equation}\label{soul}
L(x)=e^{i\theta}, \mbox{ where $\theta \in [0,\pi]$ is uniquely determined by $x=\sin^2(\theta/2)$.}
\end{equation}
Moreover, $L$ maps $\R\setminus [0,1]$ onto $(-1,1)$; precisely, we have
\begin{equation}\label{soul2}
L(x)=\left\{\begin{array}{ll}
1-2x-2\sqrt{x^2-x} & \mbox{if }x<0,\\
1-2x+2\sqrt{x^2-x} & \mbox{if }x>1.
\end{array}\right.
\end{equation}
Therefore, if we take $\varphi_n=\Re\big(F(L^{2n})\big)$, $x\in \R$, then $\mathcal{H}^\R\varphi_n=\Im\big(F(L^{2n})\big)$, since $F(L^{2n})$ is analytic on $\textrm{H}$ and it vanishes at $\infty$ (the latter follows from the requirement $F(0)=0$). Using \eqref{soul}, we derive that
\begin{align*}
 \int_\R |\mathcal{H}^\R \varphi_n(x)|^p\mbox{d}x&\geq \int_0^1 |\Im\big(F(L^{2n})\big)|^p\mbox{d}x\\
 &=\frac{1}{2}\int_0^\pi|\Im\big(F(e^{2in\theta})\big)|^p\sin\theta\mbox{d}\theta\\
&=\frac{1}{2}\int_0^{2n\pi}|\Im\big(F(e^{i\theta})\big)|^p\sin\left(\frac{\theta}{2n}\right)\frac{\mbox{d}\theta}{2n}\\
&=\frac{1}{2}\int_0^{2\pi}|\Im\big(F(e^{i\theta})\big)|^p\sum_{k=0}^{n-1}\sin\left(\frac{k\pi}{n}+\frac{\theta}{2n}\right)\frac{\mbox{d}\theta}{2n}\\
&=\frac{1}{2}\int_0^{2\pi}|\Im\big(F(e^{i\theta})\big)|^p\frac{\cos\left(\frac{\theta-\pi}{n}\right)}{2n\sin\left(\frac{\pi}{2n}\right)}\mbox{d}\theta\\
&\xrightarrow{n\to\infty} \frac{1}{2\pi}\int_0^{2\pi}|\Im\big(F(e^{i\theta})\big)|^p\mbox{d}\theta=||Y||_p^p
\end{align*}
and similarly
$$ \int_0^1 |\varphi_n(x)|^p\mbox{d}x\xrightarrow{n\to\infty}||X||_p^p.$$
Furthermore, exploiting \eqref{soul2} and the condition $F(0)=0$, we easily get
\begin{align*}
\int_{\R\setminus [0,1]} |\varphi_n(x)|^p\mbox{d}x\xrightarrow{n\to\infty} 0
\end{align*}
and
\begin{align*}
 &\left(\int_\R \left||\mathcal{H}^\R \varphi_n(x)|-\tan\frac{\pi}{2p}|\varphi_n(x)|\right|^p\mbox{d}x\right)^{1/p}\\
&\qquad \qquad \qquad \geq 
 \left(\int_0^1 \left||\mathcal{H}^\R \varphi_n(x)|-\tan\frac{\pi}{2p}|\varphi_n(x)|\right|^p\mbox{d}x\right)^{1/p}\\
 &\qquad \qquad \qquad \xrightarrow{n\to \infty} \left|\left||Y_\infty|-\tan\frac{\pi}{2p}|X_\infty|\right|\right|_p.
 \end{align*}
This proves the desired optimality of the constants. For $p>2$ the reasoning is essentially the same and we leave it to the interested reader.
\end{proof}

\begin{proof}[Sharpness of \eqref{mainR<2} and \eqref{mainR>2}, the case $d>1$] Clearly, it is enough to handle the Riesz transform $R_1$ only. Fix $p\in (1,2)\cup(2,\infty)$ and suppose that there is a nondecreasing function $\varphi_p:[0,\infty)\to (0,\infty)$ such that for all $f\in L^p(\R^d)$ we have
\begin{equation}\label{LL}
 \left|\left| |R_1f|-\cot\frac{\pi}{2p^*}|f|\right|\right|_p\leq \varphi_p\left(\cot\frac{\pi}{2p^*}-\frac{||R_1 f||_{L^p(\R^d)}}{||f||_{L^p(\R^d)}}\right)||f||_{L^p(\R^d)}.
\end{equation}
Our plan is to show that this inequality implies the validity of the corresponding estimate for the Hilbert transform on the real line (with the same function $\varphi_p$). This will clearly yield the announced optimality of the constants appearing in \eqref{mainR<2} and \eqref{mainR>2}, by the case $d=1$ considered above. 
For $t>0$, define the dilation operator $\delta_t$ as follows: for any function $g:\R\times \R^{d-1}\to \R$, we let $\delta_tg(\xi,\zeta)=g(\xi,t\zeta)$. Using \eqref{LL}, we see that the operator $T_t:=\delta_t^{-1}\circ R_1\circ \delta_t$ satisfies
\begin{equation}\label{LLL}
\begin{split}
&\left|\left| |T_tf|-\cot\frac{\pi}{2p^*}|f|\right|\right|_{L^p(\R^d)}\\
&=t^{(d-1)/p}\left|\left| |R_1\circ \delta_tf|-\cot\frac{\pi}{2p^*}|\delta_tf|\right|\right|_{L^p(\R^d)}\\
&\leq t^{(d-1)/p}\varphi_p\left(\cot\frac{\pi}{2p^*}-\frac{||R_1\circ \delta_t f||_{L^p(\R^d)}}{||\delta_tf||_{L^p(\R^d)}}\right)||\delta_tf||_{L^p(\R^d)}\\
&= \varphi_p\left(\cot\frac{\pi}{2p^*}-\frac{||T_t f||_{L^p(\R^d)}}{||f||_{L^p(\R^d)}}\right)||f||_{L^p(\R^d)}.
\end{split}
\end{equation}
It is easy to check that the Fourier transform $\mathcal{F}$ satisfies the identity $\mathcal{F}=t^{d-1}\delta_t\circ \mathcal{F}\circ \delta_t$ and therefore the operator 
$T_t$ has the property
$$ \widehat{T_tf}(\xi,\zeta)=-i\frac{\xi}{(\xi^2+t^2|\zeta|^2)^{1/2}}\widehat{f}(\xi,\zeta),\qquad (\xi,\zeta)\in \R\times \R^{d-1},$$
for any square integrable $f$ on $\R^d$. By Lebesgue's dominated convergence theorem, we have
$$ \lim_{t\to 0}\widehat{T_tf}(\xi,\zeta)=\widehat{T_0f}(\xi,\zeta)$$
in $L^2(\R^d)$, where $\widehat{T_0f}(\xi,\zeta)=-i\,\mbox{sgn}\,(\xi)\widehat{f}.$ 
Combining this with Plancherel's theorem, we obtain that for any $f\in L^2(\R^d)$ there is a sequence $(t_n)_{n\geq 1}$ decreasing to $0$ such that $T_{t_n}f$ converges to $T_0f$ almost everywhere. Using Fatou's lemma, \eqref{LLL} and the monotonicity of $\varphi_p$, we obtain
\begin{equation}\label{L4}
 \left|\left| |T_0f|-\cot\frac{\pi}{2p^*}|f|\right|\right|_{L^p(\R^d)}\leq \varphi_p\left(\cot\frac{\pi}{2p^*}-\frac{||T_0 f||_{L^p(\R^d)}}{||f||_{L^p(\R^d)}}\right)||f||_{L^p(\R^d)}
\end{equation}
Note that $T_t$ are bounded on $L^p(\R^d)$ for $1<p<\infty$ (in fact, $||T_t||_{{L^p(\R^d)}\to{L^p(\R^d)}}=||R_1||_{{L^p(\R^d)}\to {L^p(\R^d)}}$), hence so is $T_0$ and thus the above estimate holds true for all $f\in L^p(\R^d)$. 
Define $f:\R\times \R^{d-1}\to \R$ by $f(\xi,\zeta)=h(\xi)1_{[0,1]^{d-1}}(\zeta),$ where $h$ is an arbitrary function belonging to $L^p(\R)$. Then $f\in L^p(\R^d)$ and $T_0f(\xi,\zeta)=\mathcal{H}^\R h(\xi)1_{[0,1]^{d-1}}(\zeta)$, which is due to the identity
$$ \widehat{T_0f}(\xi,\zeta)=-i\mbox{sgn}\,(\xi)\,\widehat{h}(\xi)\widehat{1_{[0,1]^{d-1}}}(\zeta).$$
Plug this into \eqref{L4} to obtain
$$  \left|\left| |\mathcal{H}^\R h|-\cot\frac{\pi}{2p^*}|h|\right|\right|_{L^p(\R^d)}\leq \varphi_p\left(\cot\frac{\pi}{2p^*}-\frac{||T_0 h||_{L^p(\R^d)}}{||h||_{L^p(\R^d)}}\right)||h||_{L^p(\R^d)}.$$ 
This yields the desired sharpness.
\end{proof}

\end{document}